\newcommand{\Z}{\mathbb{Z}}
\newcommand{\N}{\mathbb{N}}
\newcommand{\Q}{\mathbb{Q}}
\newcommand{\R}{\mathbb{R}}
\newcommand{\C}{\mathbb{C}}
\newcommand{\K}{\mathbb{K}}
\newcommand{\hpl}{HP^*_{S^1,\mathrm{loc}}}
\newcommand{\hp}{HP^*_{S^1}}
\numberwithin{equation}{section}
\newtheorem{thm}{Theorem}[section]
\newtheorem{prop}[thm]{Proposition}
\newtheorem{lem}[thm]{Lemma}
\newtheorem{cor}[thm]{Corollary}
\theoremstyle{definition}
\newtheorem{definition}[thm]{Definition}
\theoremstyle{remark}
\newtheorem{rmk}[thm]{Remark}
\def\bysame{\leavevmode\hbox to3em{\hrulefill}\thinspace}
\newcommand\scalemath[2]{\scalebox{#1}{\mbox{\ensuremath{\displaystyle #2}}}}
\DeclareMathOperator{\Det}{Det}
\DeclareMathOperator{\coker}{coker}
\DeclareMathOperator{\Crit}{Crit}
\DeclareMathOperator{\Ind}{Ind}
\DeclareMathOperator{\Int}{Int}
\DeclareMathOperator{\Span}{Span}
\DeclareMathOperator{\sign}{sign}
\DeclareMathOperator{\Tr}{Tr}
\DeclareMathOperator{\Diag}{Diag}
\DeclareMathOperator{\Sp}{\mathrm{Sp}}
\DeclareFontFamily{U}{mathx}{\hyphenchar\font45}
\DeclareFontShape{U}{mathx}{m}{n}{
      <5> <6> <7> <8> <9> <10>
      <10.95> <12> <14.4> <17.28> <20.74> <24.88>
      mathx10
      }{}
\DeclareSymbolFont{mathx}{U}{mathx}{m}{n}
\DeclareMathAccent{\widecheck}{0}{mathx}{"71}
\DeclareMathAccent{\wideparen}{0}{mathx}{"75}
\begin{document}

\title{Periodic Symplectic Cohomologies}

\author[Zhao]{Jingyu Zhao}
\address{Center of Mathematical Sciences and Applications\\Harvard University}
\email{jzhao@cmsa.fas.harvard.edu, jzhao0105@gmail.com}

\date{}

\begin{abstract}
Goodwillie \cite{Goodwillie} introduced a periodic cyclic homology group associated to a mixed complex. In this paper, we apply this construction to the symplectic cochain complex of a Liouville domain $M$ and obtain two periodic symplectic cohomology theories, denoted as $\hp(M)$ and $\hpl(M)$. Our main result is that both cohomology theories are invariant under Liouville isomorphisms and there is a natural isomorphism $\hpl(M, \Q) \cong H^*(M, \Q)((u))$, which can be seen as a localization theorem for $\hpl(M, \Q)$.
\end{abstract}

\maketitle

\section{Introduction}
  Symplectic homology of an exact symplectic manifold with boundary was introduced by Cieliebak--Floer--Hofer--Wysocki in a sequence of papers \cite{CFH, CFHW, FH2, FHW}. There are several versions of the theory defined for slightly different notions of symplectic manifolds with boundary. In this paper, we will consider a specific class, called Liouville domains, which are exact symplectic manifolds with contact type boundary. The completion $\widehat{M}$ of a Liouville domain $(M, \omega=d\theta)$ is defined to be $(M \cup_{\partial M}[1, \infty) \times \partial M, d(R\alpha))$ with $\alpha= \theta|_{\partial M}$. Symplectic homology theory can be viewed as an infinite-dimensional Morse theory, or Floer theory, for an action functional $\mathscr{A} \colon \mathscr{L}\widehat{M} \rightarrow \R$. There is a natural $S^1$-action on the free loop space $\mathscr{L}\widehat{M}$ by reparametrizations which makes $\mathscr{L}\widehat{M}$ into a topological $S^1$-space. This suggests that one can define $S^1$-equivariant symplectic homology using the Borel construction of $X:=\mathscr{L}\widehat{M}$. There are two equivalent definitions of $S^1$-equivariant symplectic homology. In \cite[Section 5]{Viterbo1}, Morse theory on $X_{\mathrm{Borel}}:=X \times_{S^1} ES^1$ is defined by Viterbo as Morse theory on $X \times ES^1$ modulo a free $S^1$-action, that is, Morse theory for an $S^1$-invariant action functional on $X \times ES^1$. Whereas in \cite{seidel_biased}, using the fact that $X_{\mathrm{Borel}}$ is a locally trivial fibration over $\C P^{\infty}$, Morse theory on $X_{\mathrm{Borel}}$ can be viewed as the family Morse homology associated to a family of Morse functions parametrized by $\C P^{\infty}$ in the sense of \cite{Hut}. Both of these formulations are made explicit in the work of Bourgeois and Oancea in \cite{BO2}. \\
\indent If one takes the Liouville domain $M$ to be the unit cotangent bundle of some closed smooth manifold $Q$, then the Liouville completion $\widehat{M}$ is $T^*Q$. Under the assumption that $Q$ is Spin, the work of \cite{Viterbo1, AbbS, abu_viterbo} shows that Viterbo's isomorphism gives rise to an isomorphism of BV algebras
  \begin{equation}\label{eqn:Viso}
  SH^{n-*}(T^*Q, \Z) \cong H_*(\mathscr{L}Q, \Z),
  \end{equation}
 where $H_*(\mathscr{L}Q, \Z)$ is the homology of the free loop space equipped with the Chas-Sullivan product \cite{CS} and the BV operator. There is an $S^1$-action on $\mathscr{L}Q$ which is homotopy equivalent to the $S^1$-space $\mathscr{L}T^*Q$. Given an $S^1$-space, one can define three versions of $S^1$-equivariant homology theories, which are called positive, negative and periodic $S^1$-equivariant homology respectively. In the situation that the $S^1$-space is the free loop space $\mathscr{L}Q$, there are isomorphisms between positive, negative and periodic $S^1$-equivariant homology of $\mathscr{L}Q$ and positive, negative and periodic cyclic homology of the singular cochains on the based loop space of $Q$ respectively. This is proved by Goodwillie in \cite{Goodwillie} also see \cite{Jones, Loday}. \\
\indent Motivated by Viterbo's isomorphism \eqref{eqn:Viso} and the definition of the periodic cyclic homology of the cochains on the based loop space, we apply periodic cyclic homology to the symplectic cochain complex on the symplectic side. One feature of the symplectic cochain complex shown in \cite{BO2} is that it is an $S^1$-complex, or an $\infty$-mixed complex which was introduced in \cite{kassel}. This means that it satisfies the definition of a mixed complex up to higher homotopies. We call the periodic cyclic homology of the symplectic cochain complex \emph{the periodic symplectic cohomology} and denote it by $\hp(M)$.\\
\indent There is another version of periodic symplectic cohomology that one can associate to a Liouville domain $M$. This version is inspired by the work of Jones and Petrack in \cite{JP} on the periodic equivariant cohomology of the free loop space. It is proved by Goodwillie in \cite{Goodwillie} that the usual periodic $S^1$-equivariant cohomology does not satisfy localization or the fixed point theorem when the $S^1$-space is infinite-dimensional as in the case of the free loop space. To deal with this problem, Jones and Petrack defined a variant of the usual periodic $S^1$-equivariant cohomology, which satisfies localization but not the weak homotopy invariance property in \cite[Sections 1, 2]{JP}. The application of this equivariant cohomology theory to on the symplectic cochain complex was first proposed by P. Seidel in \cite[Remark 8.1]{seidel_biased}. Our main result can be now viewed as a symplectic cohomology analogue of the localization theorem for the periodic $S^1$-equivariant cohomology of Jones and Petrack. We call the corresponding cohomology theory \textit{the localized periodic symplectic cohomology} of the Liouville domain $M$ and denote it as $\hpl(M)$. 
\begin{thm} \label{thm:1.1}
Given a Liouville domain $(M, \theta)$, the natural inclusion of the constant loops $\iota\colon \widehat{M} \hookrightarrow \mathscr{L}\widehat{M}$ induces a map 
\begin{equation}
\iota_*\colon H^*(M)((u)) \rightarrow \hpl(M), \nonumber
\end{equation}
which is an isomorphism as $\Z/2$-graded $\Q((u))$-modules after tensoring both sides by $\Q$.
\end{thm}

\begin{rmk}
Albers, Cieliebak and Frauenfelder in \cite{ACF} have independently discovered a similar construction under the assumption that $2c_1(M)=0$ for Rabinowitz Floer homology, and named it \textit{symplectic Tate homology}. When $2c_1(M)=0$, we can equip our periodic symplectic cohomology groups $\hp(M)$ and $\hpl(M)$ with extra $\Z$-grading, and the isomorphism $H^*(M,\Q)((u)) \rightarrow \hpl(M,\Q)$ is in fact an isomorphism of $\Z$-graded $\Q((u))$-modules. 
\end{rmk}
\indent We end the introduction with an outline of this paper. Section \ref{sec:cyc} contains a discussion of the algebraic formalism of $S^1$-complexes following \cite{BO2} and the definition of the cyclic homology of an $S^1$-complex. In section \ref{sec:symcoho}, symplectic cohomology $SH^*(M)$ of a Liouville domain $M$ is defined to be the direct limit of Floer cohomology groups of admissible Hamiltonians that are linear on $[R_0, \infty) \times \partial M$ for $R_0 \gg 1$. In section \ref{sec:psh}, we introduce the localized periodic symplectic cohomology $\hpl(M)$. The Floer cochain group $CF^*(M, H_t)$ of an admissible Hamiltonian $H_t$ can be equipped with an $S^1$-complex structure $\{\delta_i\}_{i \geq 0}$, where $\delta_0=d$ is the usual Floer differential and $\delta_1=\Delta$ is the BV operator. This implies that
\begin{equation}
CF^*(M, H_t)((u)), \ \ \delta^{S^1}=\delta_0+u\delta_1+u^2\delta_2+ \hdots \nonumber
\end{equation}
is a cochain complex, where $CF^*(M, H_t)((u))$ denotes  the formal Laurent series with coefficients in $CF^*(M, H_t)$. Let $\hp(M, H_t)$ denote the homology of this cochain complex. Then $\hpl(M)$ is defined to be $\varinjlim \hp(M, H_t)$, where the direct limit is taken over all admissible Hamiltonians $H_t$ with respect to appropriate continuation maps. By definition $\hpl(M)$ is invariant under Liouville isomorphism. In practice, to compute $\hp(M)$ and $\hpl(M)$ one can use a specific class of admissible Hamiltonians, described in section \ref{sec:concx}, which are autonomous away from its $1$-periodic orbits, grow quadratically on $[1, R_0-\epsilon_0) \times \partial M$ and grow linearly of slope $\tau$ on $[R_0 ,\infty) \times \partial M$ for some large enough constant $R_0$ and sufficiently small $\epsilon_0>0$. Such autonomous Hamiltonians $H^{\tau}$ are convenient to work with, since non-constant $1$-periodic orbits of $H^{\tau}$ are in bijection with Reeb orbits of period less than $\tau$ on the contact manifold $\partial M$. Furthermore, if we define the Floer cochain complex using a carefully chosen perturbation $H^{\tau}_t$ of the autonomous Hamiltonian $H^{\tau}$, then there is in fact a filtered $S^1$- structure on $CF^*(M, H^{\tau}_t)$. One can then deduce the main theorem by computing the local $S^1$-equivariant Floer (co)homology contributions to the equivariant differential $\delta^{S^1}=d+u\Delta$ between the $1$-periodic orbits $\widehat{\gamma}$ and $\widecheck{\gamma}$ of $H^{\tau}_t$ associated to a Reeb orbit $\gamma$ of multiplicity $k$ on $\partial M$. A detailed computation in Proposition \ref{prop:1} shows that $d(\widecheck{\gamma})=\pm 2\widehat{\gamma}$ and $\Delta(\widehat{\gamma})=0$ if $\gamma$ corresponds to a bad Reeb orbit, and $d(\widecheck{\gamma})=0$ and $\Delta(\widehat{\gamma})=\pm k\widehat{\gamma}$ if $\gamma$ corresponds to a good Reeb orbit. This implies that the associated spectral sequence of the filtration on $CF^*(M, H^{\tau}_t)((u))$ degenerates at $E_1$-page over $\Q$-coefficients and converges to $H^*(M, \Q)((u))$ for each Hamiltonian $H^{\tau}_t$ of slope $\tau$. After taking direct limit, one obtains the result of Theorem \ref{thm:1.1}. In section \ref{sec:cpsh}, we define the periodic symplectic cohomology $\hp(M)$ using the homotopy colimit construction introduced in \cite[Section 3g]{AS} and prove that $\hp(M)$ is an invariant of $\widehat{M}$ up to Liouville isomorphism. Finally, we include explicit computations of $\hpl(M)$ and $\hp(M)$ when $M$ is the unit disk $D^2$ and the annulus $A$ in $\C$ in section \ref{sec:comp}. The localized periodic cohomology $\hpl(D^2)$ of the unit disk $D^2$ is $\Q((u))$, which agrees with the main result. Whereas the periodic symplectic cohomology $\hp(D^2)$ vanishes. This shows that $\hp(M)$ does not satisfy localization as in the case of the usual periodic $S^1$-equivariant cohomology of the free loop space. 

\subsection*{Standing Notations}
 \ \ \\
$\bullet$ (Grading) We will assume that the cochain complexes are only $\Z/2$-graded. In the case that the symplectic manifold $(M, \omega)$ satisfies the condition $2c_1(M)=0$, then one can promote the $\Z/2$-grading to a $\Z$-grading of the cochain complexes. Nevertheless, we write all of our operations and indices with respect to some $\Z$-grading. When only a $\Z/2$-grading is given, one should interpret an operation of degree $k$ as one of degree $k \text{ mod } 2$, and similarly interpret the indices of $1$-periodic Hamiltonian orbits $|\gamma|=k$ and Morse critical points $\mathrm{Ind}_M(Z)=k$ as $|\gamma|=k \text{ mod } 2$ and $\mathrm{Ind}_M(Z)=k \text{ mod } 2$ respectively.\\
$\bullet$ ($u$-adic completion) Let $u$ be a formal variable of degree $2$, or degree $0$ if only a $\Z/2$-grading is present. Let $\K$ be a fixed commutative ring with unity, for instance, $\K$ can be taken to be $\Z$ or $\Q$. We denote by $\K[[u]]$ the ring of \textit{formal} power series in $u$ and $\K((u))$ the ring of \textit{formal} Laurent series in $u$. Given a \textit{graded} $\K$-module $M$, we will denote by $M[[u]]$ the \textit{completion} of $M[u]:=M\otimes_{\K}\K[u]$ in the category of graded $\K[u]$-modules with respect to the $u$-adic filtration $F^pM[u]:=u^p M[u]$ for $p \in \N$. Elements in $M[[u]]:=\varprojlim M[u]/ u^p M[u]$ consist of formal power series in $u$ with coefficients in $M$. Similarly, the notation $M((u))$ denotes the completion of $M[u, u^{-1}]:=M\otimes_{\K}\K[u, u^{-1}]$ with respect to the $u$-adic filtration defined by $F^pM[u, u^{-1}]:=u^p M[u, u^{-1}]$. For a cochain complex $C^*$, we define $$C^*[[u]]:=\varprojlim C^*\otimes_{\K}\K[u]/ F^p$$ to be the completion of the graded cochain complexes $C^*[u]:=C^*\otimes_{\K}\K[u]$ with respect to the $u$-adic filtration $F^p:=C^*\otimes_{\K}u^p\K[u]$. Similarly, the notation $C^*((u))$ denotes the completion of the graded cochain complex $C^*[u,u^{-1}]:=C^*\otimes_{\K}\K[u,u^{-1}]$ with respect to the $u$-adic filtration $F^p=C^*\otimes_{\K} u^p\K[u, u^{-1}]$.

\subsection*{Acknowledgements} I would like to thank my advisor Mohammed Abouzaid for his patient guidance and numerous comments, and Paul Seidel for explaining the subtle definitions of (negative) cyclic homology using u-adic completions. I am also grateful to Mark McLean who pointed out the work \cite{CFHW}, which is crucial to the proof of the main theorem. \\
\indent The latest revision of this work was completed during my postdoctoral year at the Institute for Advanced Study, where the author was supported by the National Science Foundation under agreement No. DMS-1128155. Any opinions, findings, conclusions or recommendations in this work are those of the author and do not necessarily reflect the views of the National Science Foundation. 

\section{Cyclic homology of $S^1$-Complexes}\label{sec:cyc}

Given a cochain complex $(C^*, \delta_0)$, there are notions of an $S^1$-structure on $C^*$ and three cyclic homology theories of such an $S^1$-complex $C^*$. The exposition in this section is based on \cite[Section 2]{Jones} and \cite[Section 2.4]{BO2} while using a cohomological convention.

\begin{definition}
Let $(C^*, \delta_0)$ be a $\Z/2$-graded (resp. $\Z$-graded) cochain complex over $\K$ such that $C^i$ is a free $\K$-module for all $i \in \Z/2$ (resp. $i \in \Z$). An \textit{$S^1$-structure }on $(C^*, \delta_0 )$ is given by a sequence of maps 
\begin{equation}
\delta:= (\delta_0, \delta_1, \delta_2, \cdots ) \text{ with }\delta_i\colon C^* \rightarrow C^{*+1-2i}
\nonumber \end{equation}
such that the relation
\begin{equation} \label{eqn:s1-d}
\sum_{i+j=k} \delta_i \delta_j =0
\end{equation}
is satisfied for every $k \geq 0.$ Such pair $(C^*, \delta )$ is called an $S^1$-complex.
\end{definition}

A mixed complex $(C^*, b, B)$ is a cochain complex $C^*$ equipped with maps
\begin{align}
b\colon C^* \rightarrow C^{*+1}, \nonumber \\
B\colon C^* \rightarrow C^{*-1}\nonumber
\end{align}
that satisfy the relations
\begin{equation}\label{eqn:mix}
b^2=0,\ \  B^2=0 \text{ and } Bb=-bB.
\end{equation}
Cyclic homology of a mixed complex is studied in \cite{Loday1}.
It is clear from above definitions that a mixed complex is an $S^1$-complex with
\begin{equation}
\delta_0=b, \ \ \delta_1=B \text{ and }\delta_i=0 \text{ for all } i \geq 2.
\nonumber \end{equation}
One should view an $S^1$-complex as an $\infty$-mixed complex, which means that an $S^1$-complex $(C^*, \delta)$ satisfies \eqref{eqn:mix} up to higher order homotopies.

There are also notions of cochain maps and homotopy operators defined for $S^1$-complexes.

\begin{definition}
Let $(C^*, \delta)$ and $(D^*, \partial)$ be  two $S^1$-complexes. An $S^1$-equivariant cochain map is a sequence of maps 
\begin{equation}
\kappa:=(\kappa_0, \kappa_1, \kappa_2,\cdots) \text{ with }\kappa_i\colon C^* \rightarrow D^{*-2i}
\nonumber \end{equation} 
such that the relation
\begin{equation} \label{eqn:s1-map}
\sum_{i+j=k} \kappa_i  \delta_j - \partial_j \kappa_i=0
\end{equation}
is satisfied for every $k \geq 0$.
Similarly, given two $S^1$-equivariant maps $\kappa, \kappa'$, an $S^1$-equivariant homotopy operator is a sequence of maps 
\begin{equation}
h:=(h_0, h_1, h_2,\cdots) \text{ with } h_i \colon C^* \rightarrow D^{*-2i-1}
\nonumber \end{equation}
such that the relation
\begin{equation}
\kappa_k - \kappa'_k=\sum_{i+j=k} h_i \delta_j + \partial_j h_i
\nonumber \end{equation}
is satisfied for all $k \geq 0$.
\end{definition}

To an $S^1$-complex $(C^*, \delta)$, one can associate three cyclic homology theories as follows.

Let $u$ be a formal variable of degree $2$. We define the following cochain complexes
\begin{align}
& C^{-}:=C^*[[u]]; \nonumber \\
& C^{\infty}:=u^{-1}(C^-)\cong C^*((u)); \nonumber\\
& C^{+}:=C^{\infty}/uC^{-},\nonumber
\end{align}
where $C^*[[u]]$ denotes the completion of $C^*[u]:=C^* \otimes_{\K} \K[u]$ in the category of graded cochain complexes with respect with the exhaustive $u$-adic filtration $F^p=C^*\otimes_{\K} u^p\K[u]$, that is, $C^*[[u]]=\varprojlim C^*[u]/u^pC^*[u]$. This convention has been used, for instance, in \cite[Section 8b]{seidel_biased} and \cite[Section 3.6]{sheridan}. The $S^1$-structure on $C^*$ gives rise to a differential on $C^{-}$ given by 
\begin{equation}
\delta ^{S^1}=\delta_0+ u \delta_1 + u^2 \delta_2 + \cdots,
\nonumber \end{equation}
and the relation \eqref{eqn:s1-d} ensures that $(\delta ^{S^1})^2=0$.
We also denote the differentials induced in the localization $C^{\infty}$ and the quotient $C^{-}$ by $\delta ^{S^1}$. Then the cyclic homology of the $S^1$-complex $(C^*, \delta)$, denoted as $HC_*(C^*)$ (or $HC_*^+(C^*)$), is defined to be $H_*(C^{+}, \delta ^{S^1})$. The periodic cyclic homology $HC_*^{\infty}(C^*)$ of $C^*$ is given by $H_*(C^{\infty}, \delta ^{S^1})$, and the negative cyclic homology $HC_*^-(C^*)$ of $C^*$ is defined to be $H_*(C^{-}, \delta ^{S^1})$. All three cyclic homology theories satisfy the following invariance property.

\begin{prop} \label{prop:cycinv}
Let $\kappa=(\kappa_0, \kappa_1, \kappa_2, \cdots)$ be a cochain map between $S^1$-complexes 
\begin{equation}
(C^*, \delta=(\delta_0, \delta_1,\delta_2, \cdots)) \text{ and } (D^*, \partial=(\partial_0, \partial_1, \partial_2, \cdots)). \nonumber
\end{equation}
If $\kappa_0\colon (C^*, \delta_0) \rightarrow (D^*, \partial_0)$ is a quasi-isomorphism, then $\kappa$ induces isomorphisms
\begin{align}
& HC_*(C^*) \cong HC_*(D^*), \nonumber \\
& HC_*^{\infty}(C^*) \cong HC_*^{\infty}(D^*),\nonumber\\
& HC_*^-(C^*) \cong HC_*^-(D^*).\nonumber
\end{align}
\end{prop}
The proof of Proposition \ref{prop:cycinv} is similar to \cite[Lemma 2.1]{Jones} in the case of mixed complexes.

\begin{proof}
The isomorphism $HC_*(C^*) \cong HC_*(D^*)$ follows from applying the Comparison Theorem \cite[5.2.12]{weibel} to the $E_1$-pages of the spectral sequences associated to the bounded below and exhaustive $u$-adic filtrations on $C^+$ and $D^+$. As taking homology commutes with localization with respect to the multiplicative set $S= \{1, u, u^2,\cdots \}$, we obtain that $$HC_*^{\infty}(C^*) \cong u^{-1}HC_*^-(C^*).$$ So it suffices to prove that $HC_*^-(C^*) \cong HC_*^-(D^*)$. For fixed $m,n \in \Z$ with $m \geq  n$, we consider the quotient complex
\begin{equation}
C \langle m,n \rangle :=u^n C^-/u^m C^- .
\nonumber \end{equation} 
There is a finite filtration on $C\langle m,n \rangle$
\begin{equation}
F^p C\langle m,n \rangle :=\{ \sum_{i \geq p}^{m-1} a_i u^i \mathbin{|} a_i \in C^*, p \geq n\} \text{ for }  n \leq p \leq m-1.
\nonumber \end{equation}
The filtration $F^p D \langle m,n \rangle$ on $D \langle m,n \rangle$ is defined similarly for $D^-$. The $E_1$-pages of the associated spectral sequences are given by
\begin{equation}
E^{p,q}_1=H_{q-p}(C^*, \delta_0) \text{ and } E'^{p,q}_1=H_{q-p}(D^*, \partial_0) \text{ if } n \leq p \leq m-1,
\nonumber \end{equation} 
and $E^{p,q}_1 = E'^{p,q}_1=0$ otherwise.
Since $\kappa_0$ induces a quasi-isomorphism between $E_1^{p,q}$ and $E'^{p,q}_1$ for all $p,q$, by the Comparison Theorem we obtain an isomorphism
\begin{equation}
f_*\colon H_*(C \langle m,n \rangle, \delta ^{S^1}) \xrightarrow{\cong} H_*(D\langle m,n \rangle, \partial^{S^1}) \text{ for any }m,n \in \Z \text{ with } m \geq  n.
\nonumber \end{equation}
By construction we have that 
\begin{equation} \label{eqn:u_complete}
u^n C^- =  \displaystyle\lim_{\substack{\longleftarrow \\ m}} C \langle m,n \rangle,
\end{equation}
where the inverse limit is taken as $m \rightarrow\infty$. Again, the same statement holds for $D^-$. By the $\displaystyle\lim{}^1$ exact sequence, there is a commutative diagram of short exact sequences
\[
\xymatrixcolsep{.65pc}\xymatrix{
 0 \ar[r] & 
 \displaystyle\lim_{\substack{\longleftarrow \\ m}} {}^1H_{q+1}( C \langle m,n \rangle, \delta ^{S^1}) \ar[r] \ar[d]^{f_*} & 
H_q(u^n C^-, \delta ^{S^1})  \ar[r]  \ar[d] & 
 \displaystyle\lim_{\substack{\longleftarrow \\ m}} H_q( C \langle m,n \rangle, \delta ^{S^1}) \ar[r] \ar[d]^{f_*} & 0  \\
 0 \ar[r] & 
 \displaystyle\lim_{\substack{\longleftarrow \\ m}} {}^1H_{q+1}( D \langle m,n \rangle, \partial^{S^1}) \ar[r]  & 
H_q(u^n D^-, \partial^{S^1})  \ar[r]  & 
 \displaystyle\lim_{\substack{\longleftarrow \\ m}} H_q( D \langle m,n \rangle, \partial^{S^1}) \ar[r]  & 0 
}
\]
This implies that 
$$H_*(u^n C^-, \delta ^{S^1}) \cong H_*(u^n D^-, \partial^{S^1}),\ \  \forall n \in \Z.$$ 
In particular, when $n=0$ we obtain that $H_*(C^-, \delta ^{S^1}) \cong H_*(D^-, \partial^{S^1})$, that is, $HC_*^-(C^*) \cong HC_*^-(D^*)$ as desired. 
\end{proof}

If $C^*$ is acyclic, then the proof of Proposition \ref{prop:cycinv} implies the following immediate consequence.

\begin{cor} \label{cor:van}
If $H_*(C^*,\delta_0)=0$, then 
$$HC_*(C^*)=HC_*^{\infty}(C^*)=HC_*^-(C^*)=0.$$
\end{cor}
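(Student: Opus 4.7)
The cleanest route is to apply Proposition \ref{prop:cycinv} directly, taking $D^{*}$ to be the zero $S^{1}$-complex. The plan is to observe that the collection of zero maps $\kappa = (0, 0, \ldots) \colon C^{*} \to 0$ trivially satisfies the $S^{1}$-equivariant chain map relation \eqref{eqn:s1-map}, and under the hypothesis $H_{*}(C^{*}, \delta_{0}) = 0$ the zeroth component $\kappa_{0} \colon (C^{*}, \delta_{0}) \to (0, 0)$ is a quasi-isomorphism. Proposition \ref{prop:cycinv} then identifies each of $HC_{*}(C^{*})$, $H\widehat{C}_{*}(C^{*})$, and $HC_{*}^{-}(C^{*})$ with the corresponding cyclic homology of the zero complex, which is zero.

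If one prefers a self-contained argument that merely reuses the proof of Proposition \ref{prop:cycinv} (as the statement of the corollary suggests), then the plan is to run that proof with $D^{*} = 0$ made explicit. Concretely, the $E_{1}$-page of the spectral sequence associated to the $u$-adic filtration on $C^{+}$ is $E_{1}^{p,q} = H_{q-p}(C^{*}, \delta_{0}) = 0$, so the filtration is acyclic and $HC_{*}(C^{*}) = 0$. For the negative theory, the same vanishing gives $H_{*}(C\langle m, n\rangle, \delta^{S^{1}}) = 0$ for all $m \geq n$; the $\varprojlim^{1}$ exact sequence used in the proof of Proposition \ref{prop:cycinv} then yields $H_{*}(u^{n} C^{-}, \delta^{S^{1}}) = 0$ for every $n$, hence $HC_{*}^{-}(C^{*}) = 0$. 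The isomorphism $H\widehat{C}_{*}(C^{*}) \cong u^{-1} HC_{*}^{-}(C^{*})$ then forces $H\widehat{C}_{*}(C^{*}) = 0$.

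There is essentially no obstacle here; the only minor point to be careful about is that $C^{-} = C^{*}[[u]]$ uses completed tensor product, so vanishing at the level of each finite quotient $C\langle m, n\rangle$ does not automatically pass to the inverse limit without some lemma — and that is exactly what the $\varprojlim^{1}$ argument in the proof of Proposition \ref{prop:cycinv} handles. I would simply cite that argument rather than reproduce it.
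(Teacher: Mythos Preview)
Your proposal is correct and matches the paper's approach exactly: the paper states the corollary as an immediate consequence of the proof of Proposition~\ref{prop:cycinv}, and both of your arguments (applying the proposition with $D^{*}=0$, or rerunning its spectral sequence and $\varprojlim^{1}$ steps) are precisely that. Either version would be acceptable; the first is cleanest.
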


\begin{rmk}

The fact that the complexes $C^-$ and $C^{\infty}$ are \textit{u-adically complete} is used in equation \eqref{eqn:u_complete} in Proposition \ref{prop:cycinv} to prove that the negative and periodic cyclic homologies of $S^1$-complexes are quasi-isomorphism invariants. Another reason to take u-adic completed complexes is that the structure morphisms $\delta_i$ may be nonzero for infinitely many $i \in \N$ for general $S^1$-complexes unlike the case of mixed complexes, so the differential $\delta^{S^1}$ is only well-defined on the $u$-adic completion $C^*[[u]]$.
\end{rmk}

\begin{rmk}\label{rmk:refined}
Given a $\Z$-graded $S^1$-complex $(C^*, \delta)$,  there is another refined negative cyclic cochain complex that one can consider $$C^*[[u]]^{gr}:=\bigoplus_{k\in \Z} C^k[[u]]^{gr},$$ where $C^k[[u]]^{gr}$ is defined to be the subspace of $C^*[[u]]$ which are spanned by homogeneous series of degree $k$. Such completion has been considered in \cite[Section 1]{JP} and \cite[Section 3.5]{Shklyarov}. If our cochain complex $C^*$ is furthermore bounded below and finitely generated over $\K$ in each degree, then we have the following isomorphisms
\begin{equation}
C^*[[u]]^{gr} \cong C^*[u] \text{ and } C^*((u))^{gr} \cong C^*[u,u^{-1}]
\nonumber \end{equation}
as $\K[u]$ and $\K[u, u^{-1}]$ modules.
\end{rmk}
\section{Symplectic Cohomology}\label{sec:symcoho}

In this section, we define symplectic cohomology of the class of exact symplectic manifolds with boundary. This is first defined by Cieliebak--Floer--Hofer--Wysocki in a sequence of papers \cite{CFH, CFHW, FH2, FHW} and reformulated by Viterbo in \cite{Viterbo1} and Seidel \cite{seidel_biased}. In this section, we follow the exposition in \cite{seidel_biased}.

\begin{definition} 
A Liouville domain is a compact manifold with boundary $(M, \partial M)$, together with a 1-form $\theta$ such that:\\
(1) $\omega=d \theta$ is a symplectic form on $M$; \\
(2) The Liouville vector field $Z$, defined by $i_Z \omega =\theta$, points strictly outwards along the boundary $\partial M$.
\end{definition}
This definition implies that $\alpha:=\theta|_{\partial M}$ is a contact form on $\partial M$. Therefore a Liouville domain is also called an exact symplectic manifold with contact type boundary. \\
\indent Given such a Liouville domain $(M, \theta)$, the flow of the Liouville vector field $\psi^r_Z$ for $r \in (-\infty,0]$ gives rise to a canonical collar neighborhood of $\partial M$
\begin{equation}
\Psi\colon (-\infty, 0]\times \partial M \rightarrow M, (r, y)\mapsto \psi^r_Z(y).
\nonumber \end{equation}
It follows that $\Psi^* \theta = e^r \alpha$ and $\Psi^* Z=\partial_r$. One can then define the completion of the Liouville domain $(M, \theta)$ to be
\begin{equation}
\widehat{M}=M \cup_{\partial M}([0, \infty) \times \partial M), \widehat{\theta}|_{[0, \infty) \times \partial M}=e^r\alpha,\ \widehat{Z} |_{[0, \infty) \times \partial M}=\partial_r,\ \ \widehat{\omega}=d\widehat{\theta}.
\nonumber \end{equation}
We set $R:=e^r$ from now on. Then the completion of $(M, \theta)$ is given by 
\begin{equation} 
\widehat{M}=M \cup_{\partial M}([1, \infty) \times \partial M),  \widehat{\theta}|_{[1, \infty) \times \partial M}=R\alpha, \  \widehat{Z} |_{[1, \infty) \times \partial M}=R\partial_R,\ \  \widehat{\omega}=d\widehat{\theta}. \nonumber
\end{equation}
\begin{definition}
Two Liouville domains $ M_0$ and $M_1$ are Liouville isomorphic if there is a diffeomorphism $\psi\colon \widehat{M}_0 \rightarrow \widehat{M}_1$ satisfying $\psi^* \widehat{\theta}_1=\widehat{\theta}_0+ df$, where $f$ is a compactly supported function on $\widehat{M}_0$.
\end{definition}

We will be interested in studying the Liouville isomorphism type of the completion $\widehat{M}$ of a Liouville domain $(M, \theta)$.

\subsection{Floer cohomology of admissible Hamiltonians}
Let $(\widehat{M}, \widehat{\omega}=d(R\alpha))$ be the completion of $(M, \theta)$. We know that $(\partial M, \alpha)$ is a contact manifold with the contact distribution $\xi:=\ker(\alpha)$. The Reeb vector field $\mathcal{R}_{\alpha}$ of $\alpha$ is defined by
\begin{equation}
i_{\mathcal{R}_{\alpha}}d\alpha \equiv 0 \text{ and } \alpha(\mathcal{R}_{\alpha})=1.
\nonumber \end{equation} 
We denote by $\psi_{\mathcal{R}_{\alpha}}^t$ the flow of the Reeb vector field $\mathcal{R}_{\alpha}$ at time $t$. A Reeb orbit is a smooth map $x\colon \R/l\Z \rightarrow \partial M$ satisfying $\dot{x}(t)=\mathcal{R}_{\alpha}(x(t))$. Such Reeb orbits are called nondegenerate if the linearized return map $d\psi^l_{\mathcal{R}_{\alpha}}\colon \xi_{x(0)} \rightarrow  \xi_{x(0)}$ has no eigenvalues equal to 1. For a generic choice of the contact form $\alpha$ on $\partial M$, all Reeb orbits of $\mathcal{R}_{\alpha}$ are nondegenerate and their periods form a discrete subset of $\R^+$, called the action spectrum, defined by
\begin{equation} \label{eqn:spectrum}
\mathscr{S}= \{ \mathscr{A}(x)=\int_{x}\alpha \  \mathbin{|}  \ x  \text{ is a Reeb orbit of } \mathcal{R}_{\alpha} \}.
\end{equation}
A time-dependent Hamiltonian $H_t\colon \widehat{M}\rightarrow \R$ is admissible if it satisfies
\begin{equation} \label{eqn:adham}
H_t (r,y ) =\tau R+ C \text{ for } R \gg 1,
\end{equation}
for some $\tau \notin \mathscr{S}$, and $\tau$ is called the slope of $H_t$. We denoted by $\mathscr{H}(M)$ the space of admissible Hamiltonians on $\widehat{M}$ such that all 1-periodic orbits are nondegenerate. There is a pre-order on $\mathscr{H}(M)$ defined by 
\begin{equation}
H_t \preceq K_t \text{  if the slope of } H_t \text{ is less than or equal to the slope of } K_t.
\nonumber \end{equation} 
Given a Hamiltonian $H_t \in \mathscr{H}(M)$, the action functional $\mathscr{A}_{H_t} \colon \mathscr{L}\widehat{M} \rightarrow \R$ is given by
\begin{equation}
\mathscr{A}_{H_t} (x) = - \int_{S^1} x^*\widehat{\theta} + \int_{S^1} H_t(x(t))dt.
\nonumber \end{equation}
The Hamiltonian vector field $X_{H_t}$ of $H_t$ is defined by $i_{X_{H_t}}\omega= - dH_t$.
Critical points of $\mathscr{A}_{H_t}$ are precisely the nondegenerate $1$-periodic orbits of $X_{H_t}$, denoted by $\mathscr{P}(H_t)$.
We consider the space of time-dependent $\widehat{\omega}$-compatible almost complex structures $\mathscr{J}(M)$ such that elements $J_t \in \mathscr{J}(M)$ satisfy
\begin{equation}
d(R) \circ J_t=-\widehat{\theta} \text{ for } R \gg 1.
\nonumber \end{equation}
This implies that on 
$$T([R_0,\infty) \times \partial M) \cong (\R Z \oplus \R \mathcal{R}_{\alpha}) \oplus \xi \text{ with }R_0 \gg 1,$$ 
the almost complex structure $J_t$ is standard on $\R Z \oplus \R \mathcal{R}_{\alpha}$ and allowed to be any $d\alpha$-compatible time-dependent almost complex structure on $\xi=\ker(\alpha)$. If $H_t \in \mathscr{H}(M)$ and $J_t \in  \mathscr{J}(M)$, then we call $(H_t, J_t)$ an admissible pair.
Given an admissible pair $(H_t, J_t)$, we let $\widetilde{\mathscr{M}}(x_0,x_1, H_t, J_t)$ be the moduli space of solutions to the Floer equation
\begin{equation} \label{eqn:Floer}
\partial_s u + J_t(u) (\partial_t u-X_{H_t}(u))=0,
\end{equation}
with $E(u)= \int_{\R \times S^1} ||\partial_s u||^2 dsdt < \infty$. The finite energy condition is equivalent to having asymptotic conditions
\begin{equation}
\displaystyle\lim_{s \rightarrow - \infty} u(s,t)= x_0(t), \displaystyle\lim_{s \rightarrow + \infty} u(s,t)= x_1(t),
\nonumber \end{equation}
where $x_0, x_1$ are $1$-periodic orbits of $X_{H_t}$ such that 
$$E(u)=\mathscr{A}_{H_t}(x_0)-\mathscr{A}_{H_t}(x_1).$$
It can be shown that for fixed $H_t \in \mathscr{H}(M)$ and a generic choice of $J_t \in \mathscr{J}(M)$, the moduli space 
$$\mathscr{M}(x_0,x_1):=\widetilde{\mathscr{M}}(x_0,x_1, H_t, J_t)/\R$$ 
is a smooth manifold of dimension $|x_0|-|x_1|-1$, where $|x|=n-\mu_{CZ}(x)$ for $x \in \mathscr{P}(H_t)$, and $\mu_{CZ}(x)$ denotes the Conley--Zehnder index of a nondegenerate $1$-periodic orbit $x$.\\
\indent Due to the openness of $(\widehat{M}, \widehat{\omega})$, the usual Gromov-compactness argument fails for arbitrary Hamiltonians and $\widehat{\omega}$-compatible almost complex structures on $\widehat{M}$. However for an admissible pair $(H_t,J_t)$, Floer trajectory $u \in  \widetilde{\mathscr{M}}(x_0, x_1, H_t, J_t)$ obeys a maximal principle, shown in \cite[Lemma 1.4]{Oan}, which implies that images of any sequence of $J_t$-holomorphic maps lie in a compact region $M \cup_{\partial M} [1, R_0]$ for some $R_0 \gg 1$.
One can define the Floer cochain complex of the admissible Hamiltonian $H_t$ by
\begin{equation}
CF^i(M, H_t):=\bigoplus_{x \in \mathscr{P}(H_t), |x|=i} | o_x |,
\nonumber \end{equation}
where $|o_x|$ and the differential $d\colon CF^*(M,H_t) \rightarrow CF^{*+1}(M,H_t)$  are defined in the following section. 

\begin{rmk}
We have suppressed the dependence of $CF^*(M,H_t)$ on the almost complex structure $J_t$ as the homology of $(CF^*(M, H_t),d)$ is independent of the choice of $J_t \in \mathscr{J}(M)$.
\end{rmk}

\subsection{Orientations} \label{subsec:orient}
To define the Floer cochain complex over $\Z$-coefficients, we will relatively orientate the moduli space $\mathscr{M}(x_0,x_1)$ for all $x_0$ and $x_1$ in $\mathscr{P}(H_t)$ as in \cite[Section 1.4]{abu_viterbo}.\\
\indent Given a path $\Psi(t)$ in $\Sp(2n)$ such that $\Psi(0)=\mathbb{I}$ and $\det(\mathbb{I}-\Psi(1)) \neq 0$, one can reparametrize $\Psi(t)$ and associate to it a loop of symmetric matrices $S(t)$ that satisfies
\begin{equation}
\dot{\Psi}(t)=J_0 S(t)\cdot \Psi(t).
\nonumber \end{equation}
Such a loop $S(t)$ will be called nondegenerate if $\det(\mathbb{I}-\Psi(1)) \neq 0$.
For nondegenerate $S(t)$, we denote by $\mathscr{O}_{+}(S)$ and $\mathscr{O}_{-}(S)$ the spaces of all operators of the form
\begin{eqnarray}
&& D_{\Psi}\colon W^{1,p}(\C, \R^{2n}) \rightarrow L^p(\C, \R^{2n}),\ \ p>2 \label{eqn:opd}  \\
&& D_{\Psi}(X)=\partial_s X + J_0\partial_t X + S \cdot X, \nonumber
\end{eqnarray}
where $S \in C^{0}(\C, \mathfrak{gl}(2n))$ is required to satisfy
\begin{eqnarray}
&& S(e^{s+2\pi it})=S(t) \text{ for } s \gg 0, \text{ if } D_{\Psi} \in \mathscr{O}_+(S), \nonumber \\
&& S(e^{-s-2\pi it})=S(t) \text{ for } s \ll 0, \text{ if }D_{\Psi} \in \mathscr{O}_-(S). \nonumber
\end{eqnarray}
Similarly, for loops of nondegenerate symmetric matrices $S_-(t)$ and $S_+(t)$ corresponding to paths $\Psi_-(t)$ and $\Psi_+(t)$ in $\Sp(2n)$, we define $\mathscr{O}(S_-, S_+)$ to be the space of all operators 
\begin{eqnarray}
&& D\colon W^{1,p}(\R \times S^1, \R^{2n}) \rightarrow L^p(\R \times S^1, \R^{2n}), \ \ p>2  \nonumber  \\
&& D(X) =\partial_s X + J_0\partial_t X + S \cdot X,\nonumber
\end{eqnarray}
with $S \in C^{0}(\R \times S^1, \mathfrak{gl}(2n))$ satisfying
\begin{equation}
S(s,t)=S_-(t) \text{ for } s \ll 0 \text{ and } S(s,t)=S_+(t) \text{ for } s\gg0. \nonumber
\end{equation}

It can be shown that $\mathscr{O}_{+}(S)$, $\mathscr{O}_{-}(S)$ and $\mathscr{O}(S_-,S_+)$ consist of Fredholm operators. One can define line bundles $\Det(\mathscr{O}_{\pm}(S))$, $\Det(\mathscr{O}(S_-,S_+))$ over $\mathscr{O}_{\pm}(S)$ and $\mathscr{O}(S_-,S_+)$ by declaring the fiber over an element $D$ in $\mathscr{O}_{\pm}(S)$ or $\mathscr{O}(S_-,S_+)$ to be the determinant line bundle $\det(D)$ of the Fredholm operator $D$. If we fix nondegenerate asymptotic data $S(t)$ and $S_{\pm}(t)$, the spaces $\mathscr{O}_{\pm}(S)$ and $\mathscr{O}(S_-,S_+)$ are contractible. This implies that line bundles $\Det(\mathscr{O}_{\pm}(S))$ and $\Det(\mathscr{O}(S_-,S_+))$ are trivial for fixed loops of symmetric matrices $S(t)$ and $S_{\pm}(t)$.\\
\indent For given nondegenerate asymptotic data $S_+(t)$, $S_-(t)$ and $S(t)$, we consider Fredholm operators 
\begin{align*}
\text{ \ \ \  } K \in  \mathscr{O}(S_-, S) \text{ and } L \in \mathscr{O}(S, S_+),\nonumber \\
 \text{ or }K \in  \mathscr{O}_+(S) \text{ and } L \in \mathscr{O}(S, S_+), \nonumber  \\
 \text{ or }K \in \mathscr{O}(S_-, S)  \text{ and } L \in \mathscr{O}_-(S). \nonumber
\end{align*}

There is a linear gluing operation, denoted as $K \#_{\rho} L$, for Fredholm operators $K$ and $L$ defined as follows. Let $Z=\R \times S^1$ be the infinite cylinder. One first forms the glued Riemann surfaces $\ Z \#_{\rho} Z$, $\C \#_{\rho} Z$ and $Z \#_{\rho} \C$ under the identifications 
\begin{eqnarray}
&& [-\rho, \rho] \times S^1 \subset (-\infty, \rho] \times S^1 \rightarrow [-\rho,\rho] \times S^1 \subset [-\rho, \infty) \times S^1 \colon \nonumber \\
&& \ \ \ \ \ \ \ \ \ \ \ \ \ \ \ \ \ \ \ \ \ \ \ \ \ \ \ \ \ \ \   (s,t) \mapsto  (s, t), \nonumber   
\end{eqnarray}
\begin{eqnarray}
&& [\rho,2\rho] \times S^1  \subset (-\infty, 2\rho]\times S^1 \rightarrow  \{ z \mathbin{|} e^{\rho} \leq |z| \leq e^{2\rho} \} \subset \{ z \mathbin{|} |z| \leq e^{2\rho} \}\colon \nonumber \\
&&  \ \ \ \ \ \ \ \ \ \ \ \ \ \ \ \ \ \ \ \ \ \ \ \ \ \ \ \ \ \ \  (s,t) \mapsto  e^{3\rho-s-2\pi it}, \nonumber
\end{eqnarray}
\begin{eqnarray}
&& [\rho,2\rho] \times S^1 \subset [\rho, \infty) \times S^1  \rightarrow  \{ z \mathbin{|} e^{\rho} \leq |z| \leq e^{2\rho} \} \subset \{z \mathbin{|} |z| \leq e^{2\rho} \}\colon \nonumber \\
&&  \ \ \ \ \ \ \ \ \ \ \ \ \ \ \ \ \ \ \ \ \ \ \ \ \ \ \ (s,t) \mapsto  e^{s+2\pi it}.\nonumber
\end{eqnarray}
For $\rho\gg0$, the restrictions of the inhomogeneous terms of $K$ and $L$ to the glued regions coincide with the value of $S$, we then obtain the glued operators $K \#_{\rho} L$ in $\mathscr{O}(S_-,S_+)$, $\mathscr{O}_+(S_+)$ and $\mathscr{O}_-(S_-)$ in each case.
With respect to this gluing operation, it is shown in \cite[Proposition 9]{FH} that there is a canonical isomorphism
\begin{equation} \label{eqn:gluing}
\det(K \#_{\rho} L)\cong \det(K) \otimes \det(L)
\end{equation}
up to multiplication by a positive real number. 

We now define the Floer differential on $CF^*(M, H_t)$ over $\Z$. Let $\psi^t_H$ be the Hamiltonian flow generated by the Hamiltonian vector field $X_{H_t}$. Each $1$-periodic orbit yields a map $x\colon S^1 \rightarrow \widehat{M}$ such that $\psi^t_H(x(0))=x(t)$. For any $J_t \in \mathscr{J}(M)$, the complex vector bundle $(x^*T\widehat{M}, J_t)$ over $S^1$ can be trivialized. We choose a trivialization $\xi(t)\colon \R^{2n} \rightarrow T_{x(t)}\widehat{M}$ and obtain a path $\Psi_x(t)$ in $\mathrm{Sp}(2n)$ as the composition of
\begin{equation}
\R^{2n} \xrightarrow{\xi(0)} T_{x(0)}\widehat{M} \xrightarrow{d\psi^t_H} T_{x(t)}\widehat{M} \xrightarrow{\xi^{-1}(t)} \R^{2n}.
\nonumber \end{equation}
 Given $x_0$ and $x_1$ in $\mathscr{P}(H_t)$, we denote by $S_0(t)$ and $S_1(t)$ the loops of symmetric matrices which generate $\Psi_{x_0}(t)$ and $\Psi_{x_1}(t)$, respectively. The construction in \eqref{eqn:opd} yields operators $D_{\Psi_{x_0}}$ and $D_{\Psi_{x_1}}$ in $\mathscr{O}_-(S_0)$ and $\mathscr{O}_-(S_1)$ respectively. We introduce the notation 
 $$o_{x_i}:=|\det(D_{\Psi_{x_i}})| \text{ for } i=0,1,$$ 
 where $|\cdot|$ denotes the graded abelian group generated by the two orientations of $\det(D_{\Psi_{x_i}})$ and modulo the relation that the sum vanishes. Similarly for $u \in \widetilde{\mathscr{M}}(x_0,x_1)$ we define $o_u:=|\det(D_u)|$, where  $D_u \in \mathscr{O}(S_0, S_1)$ is the linearization of the Floer equation at $u$ with respect to a trivialization of $u^*(T\widehat{M}) \rightarrow \R \times S^1$ that agree with the trivializations of $x_i^*(T\widehat{M}) \rightarrow S^1$ as $s \rightarrow \pm \infty$.
For different choices of trivializations, Lemma 13 in \cite{FH} and Proposition 1.4.10 in \cite{abu_viterbo} show that the corresponding determinant line bundles $\det(D_{\Psi_{x_i}})$ and $\det(D_u)$ are isomorphic. This implies that $o_{x_i}$ and $o_u$ are well-defined for $i=0,1$.
By the gluing property \eqref{eqn:gluing}, we have a canonical isomorphism
\begin{equation} 
o_u \otimes o_{x_0} \cong o_{x_1}.
\nonumber \end{equation}

Together with the fact that $o_u \cong |\R \partial_s| \otimes |\mathscr{M}(x_0, x_1)|$, we obtain an isomorphism
\begin{equation} \label{eqn:oriet2}
o_{x_1} \cong |\R \partial_s| \otimes |\mathscr{M}(x_0, x_1)| \otimes o_{x_0},
\end{equation}
where $\R \partial_s$ is the 1-dimensional subspace of $\ker(D_u)$ spanned by translation in positive $s$-direction. For $|x_0|=|x_1|+1$, we have that $T\mathscr{M}(x_0,x_1)$ is canonically trivial as $\mathscr{M}(x_0,x_1)$ is a 0-dimensional manifold. By comparing the fixed orientations on both sides of \eqref{eqn:oriet2}, we obtain an isomorphism
\begin{equation}\label{eqn:d_u}
d_u\colon o_{x_1} \rightarrow o_{x_0}.
\end{equation}

The differential $d\colon CF^*(M, H_t) \rightarrow CF^{*+1}(M, H_t)$ is then defined to be
\begin{equation}
d|_{o_{x_1}} := \bigoplus_{|x_0|=|x_1|+1} \sum_{u \in \mathscr{M}(x_0,x_1)} d_u.
\nonumber \end{equation}

\subsection{Symplectic cohomology as a direct limit}
Given admissible pairs $(H_t, J_t^+)$ and $(K_t, J_t^-)$ with $H_t \preceq K_t$, one can choose a monotone homotopy $(H_s, J_s)$ between $(H_t, J_t^+)$ and $(K_t, J_t^-)$. Precisely, this means that the family of Hamiltonians satisfy
\begin{equation}
 H_s(r,y)=h_s(R) \text{ and } \frac{\partial h_s'}{\partial s} \leq 0 \text{ for }  R \gg 1,
\nonumber \end{equation}
and the family of $\widehat{\omega}$-compatible almost complex structures $J_s$ belongs to $\mathscr{J}(M)$ for each $s \in \R$.
It is shown in \cite[Section 3c]{seidel_biased} that there is a well-defined continuation map
\begin{equation} \label{eqn:kappa}
\kappa\colon HF^*(M, H_t) \rightarrow HF^*(M, K_t)
\end{equation}
associated to such a monotone homotopy. Furthermore, the continuation map $\kappa$ is independent of the choice of monotone homotopy. The symplectic cohomology of $M$ is then defined by
\begin{equation} 
SH^*(M)= \varinjlim\limits_{H_t \text{ admissible} } HF^*(M, H_t),
\nonumber \end{equation}
which is the direct limit of all Floer cohomology groups of admissible Hamiltonians $H_t$ with respect to the continuation map \eqref{eqn:kappa}.

\subsection{The BV operator on the symplectic cohomology} \label{sec:bv}
Given an admissible Hamiltonian $H_t$ of slope $\tau$ as in \eqref{eqn:adham}, we define a family of admissible Hamiltonians $H^{\theta}_{s,t}\colon \widehat{M} \rightarrow \R$ parametrized by $\R \times S^1$ by 
\begin{eqnarray}
&& H^{\theta}_{s,t}=H_t \text{ if } s \gg 0,\ \  H^{\theta}_{s,t}=H_{t+ \theta} \text{ if } s \ll 0, \nonumber \\
&& H_{s,t}^{\theta}(r,y)=\tau R+C \text{ for } R \gg 1.\nonumber
\end{eqnarray}
We also choose a family of almost complex structures $J^{\theta}_{s,t} \in \mathscr{J}(M)$ that agree with $J_t$ if $s \gg 0$ and $J_{t+\theta}$ if $s \ll 0$. The moduli space $\mathscr{M}_{\Delta}(x_0, x_1)$ consists of pairs $(\theta, u)$ such that $\theta \in S^1$ and $u\colon \R \times S^1 \rightarrow \widehat{M}$ satisfies
\begin{eqnarray}
&& \partial_s u+ J^{\theta}_{s,t}(\partial_tu-X_{H^{\theta}_{s,t}}(u))=0  \label{eqn:bv} \nonumber \\
&& \displaystyle\lim_{s \rightarrow -\infty} u(s, t+\theta)=x_0(t+ \theta), \ \ \displaystyle\lim_{s \rightarrow +\infty} u(s, t)=x_1(t).\nonumber
\end{eqnarray}
For each element $(\theta, u) \in \mathscr{M}_{\Delta}(x_0, x_1),$ the linearization of \eqref{eqn:bv} at $u$ is a Fredholm operator 
\begin{equation}
T_{\theta}S^1 \oplus W^{1,p}(Z, u^*(T\widehat{M})) \rightarrow L^p(Z, u^*(T\widehat{M})),
\nonumber \end{equation}
where the second component of this Fredholm map is given by the linearized operator $D_u$ with respect to the Floer data $(H^{\theta}_{s,t}, J_{s,t}^{\theta})$ for the fixed element $\theta \in S^1,$ and the first component of the Fredholm map is given by
\begin{equation}
\partial_{\theta} \mapsto \frac{\partial J^{\theta}_{s,t}}{\partial \theta}\bigg(\partial_t u-X_{H^{\theta}_{s,t}}(u) \bigg)-J_{s.t}^{\theta} \frac{\partial X_{H^{\theta}_{s,t}}}{\partial \theta}.
\nonumber \end{equation} 
The pair $(\theta, u)$ is said to be regular if for generic choices of Floer data $H^{\theta}_{s,t}$ and $J^{\theta}_{s,t}$, the map $T_{\theta}S^1 \rightarrow \coker(D_u)$ is surjective. For such generic Floer data, the dimension of $\mathscr{M}_{\Delta}(x_0,x_1)$ is $|x_0|-|x_1|+1$. Given $(\theta, u) \in \mathscr{M}_{\Delta}(x_0,x_1)$, there is a short exact sequence
\begin{equation}
0 \rightarrow T_u\mathscr{M}_{\Delta}(x_0, x_1) \rightarrow T_{\theta}S^1 \oplus \ker(D_u) \rightarrow \coker(D_u) \rightarrow 0,
\nonumber \end{equation}
which induces an isomorphism of determinant lines
\begin{equation}\label{eqn:bvorient}
\det(T_u\mathscr{M}_{\Delta}(x_0, x_1))\cong \det(D_u)\otimes \det(T_{\theta}S^1).
\end{equation}
By the gluing described in \eqref{eqn:gluing}, an orientation of $\det(D_u)$ is determined by the isomorphism
\begin{equation}
|\det(D_u)|\otimes o_{x_1} \cong o_{x_0}.
\nonumber \end{equation}
If one fixes the orientation of $T_{\theta}S^1$ by the trivialization $T_{\theta}S^1 \cong \R \langle \partial_{\theta} \rangle$, then \eqref{eqn:bvorient} yields an isomorphism
\begin{equation} \label{eqn:bvou}
\Delta_u\colon o_{x_1} \rightarrow o_{x_0}.
\end{equation}
Then we define the BV operator $\Delta\colon CF^*(M, H_t) \rightarrow CF^{*-1}(M, H_t)$ as
\begin{equation}
\Delta|_{o_{x_1}}=\bigoplus_{|x_0|=|x_1|-1} \sum_{(\theta, u) \in \mathscr{M}_{\Delta}(x_0,x_1)} \Delta_u .
\nonumber \end{equation}
It is shown in \cite[Section 2.2]{abu_viterbo} that this in fact defines an operation $\Delta$ on the symplectic cohomology $SH^*(M)= \varinjlim HF^*(M, H_t)$.

\section{Localized Periodic Symplectic Cohomology $\hpl(M)$}\label{sec:psh}
\subsection{The equivariant differential}
The equivariant differential was first introduced by Seidel in \cite[Section 8b]{seidel_biased} and defined explicitly by Bourgeois and Oancea \cite{BO2} and by Seidel \cite[Section (5b)]{seidel_conn} using Floer (co)homology associated to the locally trivial fibration $\mathscr{L}\widehat{M} \times_{S^1}ES^1 \rightarrow \C P^{\infty}$. The following exposition follows the definition of the equivariant differential given by Bourgeois and Oancea \cite[Sections 2.2, 2.3]{BO2} while using a cohomological convention.\\
\indent We consider a sequence of Morse-Smale pairs $(f_N, g_N)$ on $\C P^N$, where $f_N$ is the standard Morse function
\begin{equation} \label{eqn:morse}
f_N([z_0: \cdots :z_N])= \frac{C \sum_{j=0}^N(j+1)|z_j|^2}{\sum_{j=0}^N|z_j|^2} \ \text{ for some } C>0,
\end{equation}
and $g_N$ is the metric induced by the standard round metric on $S^{2N+1}$. We denote the $S^1$-invariant lift of $f_N$ to $S^{2N+1}$ by $\widetilde{f}_N$. Each critical point $z_0$ of $f_N$ on $\C P^N$ gives rise to a critical orbit $S^1\cdot z_0$ of the Morse-Bott function $\widetilde{f}_N$ on $S^{2N+1}$. On the principal $S^1$-bundle $\pi_N \colon S^{2N+1} \rightarrow \C P^N,$ we choose a flat connection $\nabla^{N}$ so that the horizontal section $s$ of $\pi_N$ over some neighborhood $N(z_0)$ of $z_0$ defines a horizontal local slice $U_{z_0}:=s(N(z_0))$ that is transverse to orbits of the $S^1$-action for each $z_0 \in \mathrm{Crit(f_N)}$. For any point $z$ in $V_{z_0}:=S^1 \cdot U_{z_0}$, there is a unique element $\theta_z \in S^1$ such that $\theta_z^{-1} \cdot z$ belongs to the horizontal local slice $U_{z_0}$. A Hamiltonian $H_N \colon S^1 \times \widehat{M} \times S^{2N+1} \rightarrow \R$ is said to be admissible if it satisfies the following conditions:
\begin{enumerate}
\item[(1)] ($S^1$-invariance) $H_N(t+\theta,x, \theta \cdot z)=H_N(t,x,z)$ for $\theta \in S^1.$
\item[(2)] (Nondegeneracy) The Hamiltonian $H^{z_0}_N(t,x):=H_N(t,x,z_0)$ has only nondegenerate $1$-periodic orbits for all $z_0 \in \mathrm{Crit}(f_N).$
\item[(3)] (Locally constant family near critical orbits of $\widetilde{f}_N$) On $S^1 \times \widehat{M} \times V_{z_0}$, the Hamiltonian $H_N$ satisfies $H_N(t,x,z)=H_{t-\theta_z}(x)$ for some $H_t$ in $\mathscr{H}(M)$ and for all $z_0 \in \mathrm{Crit}(f_N)$.
\end{enumerate}
We denote by $\mathscr{H}^{S^1}_N$ the space of admissible time-dependent Hamiltonians on $S^1 \times\widehat{M} \times S^{2N+1}.$ The condition $(3)$ ensures that the Hamiltonian extension $H_N$ is the constant family along each local horizontal slice $U_{z_0}$. Together with the $S^1$-invariance property in condition $(1)$, we conclude that the admissible extension $H_N$ are locally constant families defined over each neighborhood $N(z_0)$ of $z_0 \in \mathrm{Crit}(f_N).$ For instance, given a Hamiltonian $H_t$ in $\mathscr{H}(M)$, we can extend it to an admissible Hamiltonian $H_N \in \mathscr{H}_N^{S^1}$ by
\begin{equation} \label{eqn:H_N}
H_N(t,x,z)=\beta(z)H_{t-\theta_z}(x)+(1-\beta(z))\rho(x)H_t(x),
\end{equation}
where $\beta(z)$ is an $S^1$-invariant cut-off function on $S^{2N+1}$ which is equals to $1$ on some open subset $V'_{z_0} \subset V_{z_0}$ and $0$ outside $V_{z_0}$ for each $z_0 \in \Crit(f_N),$ and $\rho(x)$ is a cut-off function which is $1$ on $[R_0, \infty) \times \partial M$ for $R_0 \gg 0$ and 0 in the region where $H_t$ depends on time. Similarly, we define the space of $S^1$-invariant $\widehat{\omega}$-compatible almost complex structures on $\widehat{M}$ parametrized by $S^{2N+1}$ as follows
\begin{equation}
\mathscr{J}_N^{S^1}=\{ J_t^z, t \in S^1, z \in S^{2N+1} \mathbin{|}  J_{t+\theta}^{\theta \cdot z}=J_t^z \text{ and } J^z_t \in \mathscr{J}(M),\forall z \in S^{2N+1} \}.
\nonumber 
\end{equation}
For each $N$, we choose $J_N \in \mathscr{J}_N^{S^1}$ such that $J_N^z$ is independent of $z$ along the horizontal local slice $U_{z_0}$, and we choose an $S^1$-invariant metric $\widetilde{g}_N$ on $S^{2N+1}$ such that $\nabla \widetilde{f}_N$ is tangent to $U_{z_0}$ for each $z_0 \in \Crit(f_N)$. \\
\indent For Floer data $(H_N, J_N, \widetilde{f}_N, \widetilde{g}_N)$ chosen as above, it is shown in \cite{BO2} that for large enough constant $C >0$ in \eqref{eqn:morse} the critical points of
 $$\mathscr{A}_{H_N+\widetilde{f}_N}\colon \mathscr{L}\widehat{M} \times S^{2N+1} \rightarrow \R$$ are of the form $$\coprod_{j=1}^N S^1 \cdot (\mathscr{P}(H_t) \times \{ Z_j\}),$$ where $Z_j$ is a critical point of index $2j$. Let $CF^*(M, H_t)$ be the Floer cochain complex of $H_t$. The cochain complex associated to $\mathscr{A}_{H_N+\widetilde{f}_N}$ has a particularly simple form
\begin{equation}
CF^*(H_N+\widetilde{f}_N, J_N, \widetilde{g}_N) \cong CF^*(M, H_t) \otimes_{\Z} \Z[u]/(u^{N+1})
\nonumber \end{equation}
under the map $S^1 \cdot (\gamma, Z_j) \mapsto \gamma \otimes u^j$ for $\gamma \in \mathscr{P}(H_t)$ and $Z_j \in \Crit(f_N)$. The index of $x=(\gamma,Z)$ in $CF^*(H_N+\widetilde{f}_N, J_N, \widetilde{g}_N)$ is defined by
\begin{equation}
|x|=|\gamma|+\Ind_M(Z),
\nonumber 
\end{equation}
where $\Ind_M(Z)$ is the standard Morse index of the critical point $Z$ with respect to $f_N.$\\
\indent Let $H_N^z:=H_N(\cdot,\cdot,z)$ be the Hamiltonian defined on $\widehat{M}$. For critical points given by $x_0=(\gamma_0, Z_i)$ and $ x_1=(\gamma_1, Z_0 )$ in $\mathscr{P}(H_t) \times \Crit(f_N)$ with $\Ind_M(Z_0)=0$ and $\Ind_M(Z_i)=2i$, we denote by 
\begin{equation}
\widetilde{{\mathscr{M}}}_i(\gamma_0, \gamma_1):=\widetilde{\mathscr{M}}_i(x_0, x_1, H_N, J_N, \widetilde{f}_N, \widetilde{g}_N)
\nonumber \end{equation}
the moduli space of solutions $u\colon \R \times S^1 \rightarrow \widehat{M}$ and $z\colon \R \rightarrow S^{2N+1}$ to the system of equations
\begin{equation}  \label{eqn:delta}
\begin{cases}
\partial _s u+J_N^{z(s)}(u)(\partial _t u-X_{H^{z(s)}_N}(u))=0, \\
\dot{z}+\nabla \widetilde{f}_N(z)=0,
\end{cases}
\end{equation}
with asymptotic behaviors
\begin{equation}
\displaystyle\lim_{s \rightarrow - \infty}(u(s, \cdot), z(s)) \in S^1 \cdot x_0, \ \displaystyle\lim_{s \rightarrow \infty}(u(s, \cdot), z(s)) \in S^1 \cdot x_1,
\nonumber \end{equation}
where $S^1 \cdot x_j$ is the $S^1$-orbit of $x_j$ for $j=0,1$.
There is a free $\R \times S^1$-action on $\widetilde{\mathscr{M}}_i(\gamma_0, \gamma_1)$ given by reparametrizations in the domain $\R \times S^1$. We denote the quotient space by 
$$\mathscr{M}_i(\gamma_0, \gamma_1):=\widetilde{\mathscr{M}}_i(\gamma_0, \gamma_1)/\R \times S^1.$$ 
For a generic choice of data $(H_N, J_N, \widetilde{f}_N, \widetilde{g}_N)$, it is proved in \cite[Section 2.1]{BO2} that the dimension of the moduli space $\mathscr{M}_i(\gamma_0, \gamma_1)$ is
\begin{equation}
|x_0|-|x_1|=|\gamma_0|-|\gamma_1|-1+2i.
\nonumber \end{equation} 
In particular, if $|\gamma_0|=|\gamma_1|+1-2i$, then the moduli space $\mathscr{M}_i(\gamma_0, \gamma_1)$ is zero dimensional. For each element $u$ of $\mathscr{M}_i(\gamma_0, \gamma_1)$ one obtains an isomorphism 
\begin{equation}
\delta_{i,u}\colon o_{\gamma_1} \rightarrow o_{\gamma_0}
\nonumber \end{equation}
by relatively orienting the parametrized moduli space $\mathscr{M}_i(\gamma_0, \gamma_1)$ as in the case of the BV operator. This yields an operation $\delta_i\colon CF^*(M, H_t) \rightarrow CF^{*+1-2i}(M, H_t)$ defined by
\begin{equation} \label{eqn:delta_i}
\delta_i|_{o_{\gamma_1}}= \bigoplus_{|\gamma_0|=|\gamma_1|+1-2i} \sum_{u \in \mathscr{M}_i(\gamma_0,\gamma_1)} \delta_{i,u}.
\end{equation}
The formula $\delta^{S^1}_N(\gamma)=\sum_{i=0}^N u^i \delta_i(\gamma)$ defines a differential on the cochain complex $CF^*(M, H_t)\otimes_{\Z} \Z[u]/(u^{N+1}\Z[u])$. \\
\indent For different $N$, the Floer data $(H_N, J_N, \widetilde{f}_N,\widetilde{g}_N)$ chosen for $\widehat{M} \times S^{2N+1}$ and the Floer data $(H_{N+1}, J_{N+1},\widetilde{f}_{N+1}, \widetilde{g}_{N+1})$ chosen for $\widehat{M} \times S^{2N+3}$ are required to satisfy compatibility conditions as in \cite[Section 2.3]{BO2}
\begin{eqnarray}
&& H_{N+1}(t,x,i_1(z))=H_{N+1}(t, x,i_0(z)) = H_N(t, x, z),\nonumber \\
&& J_{N+1}^{i_1(z)} = J_{N+1}^{i_0(z)} = J_N^z\text{ and } i_1^*\widetilde{g}_{N+1} = i_0^*\widetilde{g}_{N+1} = \widetilde{g}_N,\nonumber
\end{eqnarray}
where $i_0$ and $i_1$ are the natural inclusions  $i_0, i_1 \colon S^{2N+1} \rightarrow S^{2N+3}$ defined by
\begin{eqnarray}
&& i_0 (z_0, z_1,\cdots, z_{N})= (z_0, z_1,\cdots, z_{N},0), \nonumber\\
&& i_1(z_0, z_1,\cdots, z_{N})= (0,z_0, z_1,\cdots, z_{N}), \text{ where } \Sigma_{i}|z_i|^2=1.\nonumber
\end{eqnarray}
Similarly, the flat connections $\nabla^N, \nabla^{N+1}$ on the principal $S^1$-bundles $\pi_N \colon S^{2N+1} \rightarrow \C P^N$  and $\pi_{N+1} \colon S^{2N+3} \rightarrow \C P^{N+1}$ can also be chosen so that they are compatible with the inclusions $i_0$ and $i_1.$ These conditions ensure that the $S^1$-equivariant differentials defined on the cochain complexes 
\begin{equation}
CF^*(M, H_t)\otimes_{\Z}\Z[u]/(u^{N+1}\Z[u]) \text{ and } CF^*(M, H_t)\otimes_{\Z}\Z[u]/(u^{N+2}\Z[u]) 
\nonumber \end{equation}
are compatible. After taking inverse limit as $N \rightarrow \infty$, one obtains a well-defined $S^1$-equivariant differential on 
\begin{equation}
CF^*(M, H_t)[[u]] \cong \varprojlim_N CF^*(M, H_t)\otimes_{\Z}\Z[u]/(u^{N+1}\Z[u])
\nonumber \end{equation}
given by
\begin{equation}
\delta ^{S^1}=\delta _0+u\delta _1+u^2\delta_2 + \cdots.
\nonumber \end{equation}

It is shown in \cite[Section 2.2]{BO2} that $(\delta^{S^1})^2=0 $, which is equivalent to the fact that $\delta=(\delta_0, \delta_1, \delta_2, \cdots)$ gives rise to an $S^1$-structure on $CF^*(M, H_t)$. The operations $\delta_0$ and $\delta_1$ agree with the usual differential $d$ and the BV operator $\Delta$ defined on symplectic cohomology $SH^*(M)$, respectively. \\
\indent Given admissible pairs $(H_t, J_t^+)$ and $(K_t, J_t^-)$ with $H_t \preceq K_t$, there is a map of $S^1$-complexes between $CF^*(M,H_t)$ and $CF^*(M,K_t)$. We choose a monotone homotopy $(H_{s,t}, J_{s,t})$ between $(H_t, J_t^+)$ and $(K_t, J_t^-)$. Using the same extension formula as in \eqref{eqn:H_N}, we can extend $H_{s,t}$ to a family of monotone homotopies parametrized by $S^{2N+1}$ 
\begin{equation}\label{eqn:ext}
 H_{N,s}\colon \R \times S^1 \times \widehat{M} \times S^{2N+1} \rightarrow \R  
\end{equation} 
such that $H_{N,s,t}^z:=H_{N,s}(t,\cdot, z)$ belongs to $\mathscr{H}(M)$ for all $z \in S^{2N+1}$. One can also choose $J_{N,s} \in \mathscr{J}_N^{S^1}$ for $s \in \R$ such that along each horizontal local slice $U_{z_0}$ one has that $J_{N,s,t}^z$ is independent of $z$ and agrees with the chosen homotopy of almost complex structures $J_{s,t} \in \mathscr{J}(M)$. Given $x_-=(\gamma_-, Z_j)$ and $x_+=(\gamma_+, Z_0)$ with $\Ind_M(Z_0)=0$ and $\Ind_M(Z_j)=2j$, we denote by 
\begin{equation}
\widetilde{\mathscr{M}}^{\kappa}_j(\gamma_-, \gamma_+):=\widetilde{\mathscr{M}}^{\kappa}_j(x_-, x_+,H_{N,s}, J_{N,s}, \widetilde{f}_N, \widetilde{g}_N) 
\nonumber \end{equation}
the moduli space of solutions $u\colon \R \times S^1 \rightarrow \widehat{M}$ and $z\colon \R \rightarrow S^{2N+1}$ to the system of equations
\begin{equation} \label{eqn:k_i}
\begin{cases}
\partial _s u+J^{z(s)}_{N,s,t}(u)(\partial _t u-X_{H_{N,s,t}^{z(s)}}(u))=0, \\
\dot{z}+\nabla \widetilde{f}_N(z)=0,
\end{cases}
\end{equation}
with asymptotic behaviors
\begin{equation}
\displaystyle\lim_{s \rightarrow - \infty}(u(s, \cdot), z(s)) \in  S^1 \cdot x_- ,\ \ \displaystyle\lim_{s \rightarrow \infty}(u(s, \cdot), z(s)) \in  S^1 \cdot x_+.
\nonumber \end{equation}
There is a free $S^1$-action on $\widetilde{\mathscr{M}}^{\kappa}_j(\gamma_-, \gamma_+)$ and we define $$\mathscr{M}^{\kappa}_j(\gamma_-, \gamma_+):=\widetilde{\mathscr{M}}^{\kappa}_j(\gamma_-, \gamma_+)/S^1.$$ For generic choice of data $(H_{N,s}, J_{N,s})$, the dimension of the moduli space $\mathscr{M}^{\kappa}_j(\gamma_-, \gamma_+)$ is given by
\begin{equation}
|x_0|-|x_1|=|\gamma_-|-|\gamma_+|+2j.
\nonumber \end{equation} 
For each $u \in \mathscr{M}^{\kappa}_j(\gamma_-, \gamma_+)$ with $|\gamma_-|=|\gamma_+|-2j$, there is an isomorphism
\begin{equation}
\kappa_{j,u}\colon o_{\gamma_+} \rightarrow o_{\gamma_-},
\nonumber \end{equation} 
by relatively orienting the parametrized moduli space $\mathscr{M}^{\kappa}_j(\gamma_-, \gamma_+).$
As $N \rightarrow \infty$, there are operations $\kappa_j\colon CF^*(M, H_t) \rightarrow CF^{*-2j}(M, K_t)$ defined for all $j \geq 0$ by
\begin{equation}  \label{defn:k_i}
\kappa_j|_{o_{\gamma_+}}=\bigoplus_{|\gamma_-|=|\gamma_+|-2j}\sum_{u \in \mathscr{M}^{\kappa}_j(\gamma_-, \gamma_+)} \kappa_{j,u}.
\end{equation}
By the definitions of $\delta_i$ and $\kappa_j$, one can verify that
\begin{equation}
\sum_{i+j=k} (\kappa_j \delta_i- \delta_i \kappa_j)=0 \text{ for all } k \geq 0.
\nonumber \end{equation}
For admissible Hamiltonians $H_t \preceq K_t$, we obtain a cochain map defined by
\begin{eqnarray} \label{defn:kappa}
&& \kappa\colon CF^*(M, H_t)[[u]] \rightarrow CF^*(M, K_t)[[u]],  \nonumber \\
&& \kappa=\kappa_0+u\kappa_1+u^2 \kappa_2 + \cdots.\nonumber
\end{eqnarray}

\subsection{Localized periodic symplectic cohomology}For an admissible Hamiltonian $H_t$ of slope $\tau \notin \mathscr{S}$, the periodic symplectic cochain complex of $H_t$ is defined to be
\begin{equation} 
CP^*(M,H_t):=CF^*(M, H_t)((u)).
\nonumber \end{equation}
We denote the cohomology of $(CP^*(M, H_t), \delta^{S^1})$ by $\hp(M, H_t)$.\\
\indent For any Hamiltonians $H_t \preceq K_t$, there is a cochain map 
\begin{equation}
\kappa\colon CF^*(M, H_t)((u)) \rightarrow CF^*(M, K_t)((u))
\nonumber \end{equation}
induced by \eqref{defn:kappa} in the localization. The induced map in the homology 
\begin{equation} \label{eqn:pshcon}
 \kappa_*\colon \hp(M, H_t)\rightarrow \hp(M, K_t)
\end{equation} 
is well-defined and independent of the choice of the monotone homotopy between $H_t$ and $K_t$. The \textit{localized periodic symplectic cohomology} is defined to be
\begin{equation}
\hpl(M):= \varinjlim \hp(M, H_t),
\nonumber \end{equation} 
where the direct limit is taken over the pre-ordered set of all admissible Hamiltonians $H_t$ in $\mathscr{H}(M)$ with respect to \eqref{eqn:pshcon}. By definition, the localized periodic symplectic cohomology $\hpl(M)$ is an invariant of the completion $\widehat{M}$ up to Liouville isomorphisms. We denote $\hpl(M) \otimes_{\Z} \Q$ by $\hpl(M, \Q)$. It will be shown that $\hpl(M, \Q) \cong H^*(M, \Q) ((u))$ in section \ref{sec:local}.  

\begin{rmk}
Suppose that $2c_1(M)=0$, we can equip the Floer cochain complex $CF^*(M, H_t)$ with a $\Z$-grading. Since there are only finitely many generators of $CF^*(M, H_t)$, the degrees of the generators are bounded from above and below, and the $S^1$-equivariant differential $\delta^{S^1}$ on each $CF^*(M, H_t)$ has only finitely many non-zero terms by degree reasons. This implies that the (refined) periodic cyclic homology of the $S^1$-complex $(CF^*(M,H_t), \delta)$ defined in Remark \ref{rmk:refined} satisfies
\begin{equation}
CF^*(M,H_t)((u))^{gr} \cong CF^*(M, H_t)[u,u^{-1}]
\nonumber \end{equation}
for any admissible Hamiltonian $H_t \in \mathscr{H}(M).$ As a direct limit of $\Z[u,u^{-1}]$-modules, one obtains that the (refined) localized periodic symplectic cohomology
\begin{equation}
HP^*_{S^1,loc}(M):=\varinjlim H^*(CF^*(M,H_t)((u))^{gr}, \delta^{S^1})
\nonumber \end{equation}
is a $\Z[u,u^{-1}]$-module when $2c_1(M)=0.$ The corresponding localization Theorem states that $HP^*_{S^1,loc}(M) \cong H^*(M, \Q)[u,u^{-1}]$. One can compare this with the main theorem in \cite{ACF}. However, for a general Liouville domain $M$, the localized periodic symplectic cohomology $HP^*_{S^1,loc}(M)$ is only $\Z/2$-graded. So we keep the notation of the $u$-adic completion in the statement of Theorem \ref{thm:1.1} as $\hpl(M, \Q) \cong H^*(M, \Q) ((u))$ .
\end{rmk} 

\section{A Convenient Complex for computations}\label{sec:concx}
In this section, we restrict ourselves to a special class of admissible Hamiltonians which are autonomous away from neighborhoods of its $1$-periodic orbits. Explicitly, for $\tau \notin \mathscr{S}$ we consider an autonomous Hamiltonian shown in Figure \ref{fig:1}, which is of the form 
\begin{equation} \label{eqn:autham}
\widehat{H}^{\tau}(x) =
\begin{cases}
f(x), & \text{if } x \in \Int(M);\\
\frac{(R-1)^2}{2}, & \text{if } x=(r,y) \subset [1, \tau +1] \times \partial M;\\
\tau (R-1)-\frac{\tau^2}{2}, & \text{if }  x=(r,y) \subset [\tau +1, \infty) \times \partial M,
\end{cases}
\end{equation} 
where $f(x)$ is a negative $C^2$-small Morse function in the interior of $M$. To obtain a smooth Hamiltonian, we define the value of the Hamiltonian $\widehat{H}^{\tau}$ on the collar neighborhood $[1-\epsilon_0,1] \times \partial M$ to be $\rho(R)f(x)$, where $\rho(R)$ is a smooth cut-off function which equal to $1$ at $\{ 1-\epsilon_0\} \times \partial M$ and zero at $\{1\} \times \partial M$ for $\epsilon_0$ sufficiently small. One notices that $\widehat{H}^{\tau}$ is only $C^0$ at $R=\tau+1$, so we need to modify the value of $\widehat{H}^{\tau}$ on ${[\tau+1, \tau+1+\epsilon_0]\times \partial M}$ such that 
\begin{equation}
\frac{d \widehat{H}^{\tau}(R)}{d R} =
\begin{cases}
R-1, & \text{if } R \in [1, \tau +1]; \\
g(R), & \text{if } R \in [\tau+1, \tau+1+\epsilon_0];\\
\tau, & \text{if } R \in [\tau +1 +\epsilon_0, \infty),
\end{cases}
\nonumber \end{equation}
where $g(R)$ is a smooth function whose values agree with $R-1$ and $\tau$ to all orders at $R=\tau +1$ and $R=\tau +1 +\epsilon_0$ respectively. On $[\tau+1, \tau+1+\epsilon_0]$, the function $g(R)$ satisfies 
\begin{equation}
\int_{\tau+1}^{\tau+1+\epsilon_0}g(R) dR=\tau+\epsilon_0.
\nonumber \end{equation}
We denote the resulting smooth Hamiltonian by $H^{\tau}$.\\
\indent For the autonomous Hamiltonian $H^{\tau}$ chosen above, all $1$-periodic orbits $\gamma$ of $H^{\tau}$ are transversely nondegenerate, that is, the linearized return map $d\psi_{X_{H^{\tau}}}^1$ when restricting to the contact distribution $\xi,$ $d\psi_{X_{H^{\tau}}}^1|_{\xi} \colon \xi_{\gamma(0)} \rightarrow \xi_{\gamma(0)},$ does not have $1$ as its eigenvalue. There is a non-trivial $S^1$-action on transverse nondegenerate $1$-periodic orbits by 
$$S^1 \times \mathscr{P}(H^{\tau}) \rightarrow \mathscr{P}(H^{\tau}), (s, \gamma(t)) \mapsto \gamma(t+s).$$ 
We can break the $S^1$-symmetry by choosing small perturbations of the Hamiltonian $H^{\tau}$ in some isolated neighborhood $N(\gamma)$ of each orbit $\gamma$ as follows. Let $U= \bigcup_{\gamma \in \mathscr{P}(H^{\tau})} N(\gamma)$ and define the space of time-dependent perturbations of $H^{\tau}$ by
\begin{equation} \label{eqn:H_per}
\mathscr{H}^{per}:=\{ h \in C^{\infty}(S^1 \times U, \R) \mathbin{|} |\nabla h(t,x)| \leq 1, \ \forall (t,x)\in S^1 \times U \},
\end{equation}
where $|\cdot|$ is the norm  with respect to the metric $\langle \cdot , \cdot \rangle=\widehat{\omega}(\cdot,J_t\cdot)$ for some almost complex structure $J_t$ on $U.$ 
For a fixed 1-periodic orbit $\gamma$ of $H^{\tau}$ that corresponds to a Reeb orbit of multiplicity $k$, one can choose a Morse function $h_0\colon S^1 \rightarrow [0, \frac{1}{2}]$ that has one minimum value $0$ at $0 \in S^1$ and one maximum value $1/2$ at $c_0$ for some small enough $c_0 \in S^1$. We define 
\begin{equation}
h_{\gamma}(t)\colon N(\gamma) \rightarrow [0,1], \ \ h_{\gamma}(t)(\gamma(s))=h_0(ks-kt) \text{ for } s \in S^1,
\nonumber \end{equation}
and smoothly extend it to $\overline{N(\gamma)}$ such that $h_{\gamma}(t)|_{\partial \overline{N(\gamma)}} \equiv 0$. We define an explicit time-dependent perturbation $H^{\tau}_{\epsilon}(t)$ of $H^{\tau}$ by
\begin{equation}\label{eqn:pb}
H^{\tau}_{\epsilon}(t) =H^{\tau}+\epsilon h_t^{\tau}:=H^{\tau}+\epsilon \sum_{\gamma \in \mathscr{P}(H^{\tau})}  h_{\gamma}(t)
\end{equation}
for some $\epsilon >0$ and $\mathrm{supp}(h_t^{\tau}) \subset \overline{U}.$ One observes that the perturbed Hamiltonian $H^{\tau}_{\epsilon}(t)$ has two $1$-periodic orbits in $N(\gamma)$ for each $\gamma \in \mathscr{P}(H^{\tau})$. It is proved in \cite[Proposition 2.2]{CFHW} that the new $1$-periodic orbits are translations of $\gamma(t)$ defined by
\begin{equation} \label{defn:gamma}
\widecheck{\gamma}(t):=\gamma(t),\ \ \  \widehat{\gamma}(t):=\gamma(t+\frac{c_0}{k}).
\end{equation} 
In fact, these orbits $\widehat{\gamma}(t)$ and $\widecheck{\gamma}(t)$ are the only $1$-periodic orbits of $H^{\tau}_{\epsilon}(t)$ in the neighborhood $N(\gamma)$ such that
\begin{equation}
|\widehat{\gamma}|=|\widecheck{\gamma}|+1.
\nonumber \end{equation}
In the subsequent discussions, we will fix an appropriate choice of $\epsilon_{i} >0$ depending on $i$ and $h_t^{\tau} \in \mathscr{H}^{per}$ in \eqref{eqn:pb} satisfying Lemma \ref{lem:preact} and Lemma \ref{lem:no_escape}. For simplicity of notations, we denote by $H_t^{\tau}$ the time-dependent perturbation $H^{\tau}_{\epsilon_{i}}(t)$ of the autonomous Hamiltonian $H^{\tau}$ if there is no further confusions.

\subsection{Relations with Reeb dynamics on $\partial M$}
Given a Liouville domain $(M, \theta)$, we recall that $(\partial M, \xi=\ker(\theta|_{\partial M}))$ is a contact manifold. Let $\psi^t_{\mathcal{R}_{\alpha}}$ be the flow of the Reeb vector field. Reeb orbits of period $l$ are smooth maps $x\colon \R/l\Z \rightarrow \partial M$ such that $\psi^t_{\mathcal{R}_{\alpha}}(x(0))=x(t)$. Such an orbit is simple if the map $x$ is injective. We further assume that all Liouville domains in consideration satisfy:
\begin{equation}\label{eqn:cond}
\parbox{28em}{distinct Reeb orbits  $x, x'$ have different periods $\int_{x} \alpha \neq \int_{x'} \alpha$.}
\end{equation}
This is a generic condition which can be achieved by perturbing the contact form in the neighborhood of each Reeb orbit as in Theorem 13 of \cite{ABW}.

\begin{definition}
Let $x$ be a simple Reeb orbit. Then $x$ is called hyperbolic if $d\psi^l_{\mathcal{R}_{\alpha}}\colon \xi_{x(0)} \rightarrow \xi_{x(0)}$ has no eigenvalues in the unit circle, otherwise it is called non-hyperbolic. If $d\psi^l_{\mathcal{R}_{\alpha}}$ has an odd number of eigenvalues belonging to $(-1,0)$, then $x$ is called a negative hyperbolic Reeb orbit.
\end{definition}

\begin{definition} \label{defn:bad}
Let $x$ be a simple Reeb orbit that is negative hyperbolic. Its $2k$-fold covers $x^{2k}$ are called bad Reeb orbits. A Reeb orbit is called good if it is not a bad Reeb orbit.
\end{definition}

\begin{rmk} \label{rmk:badcz}
There is another characterization of bad Reeb orbits appearing in \cite{EGH} on symplectic field theory, that is, a Reeb orbit $x^{2k}$ is called bad if it is a $2k$-fold cover of some simple orbit $x$ such that $\mu_{CZ}(x^{2k})-\mu_{CZ}(x)$ is odd. 
\end{rmk}

\indent For any autonomous Hamiltonian $H^{\tau}|_{[1,\infty)\times \partial M}=h^{\tau}(R)$ as in \eqref{eqn:autham}, we have 
\begin{equation}
X_{H^{\tau}}=h^{\tau}{}'(R) \cdot\mathcal{R}_{\alpha}=(R-1)\mathcal{R}_{\alpha} \text{ on } [1, \infty) \times \partial M.
\nonumber \end{equation} 
This implies that Reeb orbits on $\partial M$ of period $l$ with $l < \tau$ are in one-to-one correspondence to non-constant transversely nondegenerate $1$-periodic Hamiltonian orbits of $H^{\tau}$ under the map 
$$x(t) \mapsto \gamma(t)=(l+1, x(lt)).$$ 
It should be observed that for a $1$-periodic orbit $\gamma(t)=(l+1, x(lt))$ in $\{l+1\} \times \partial M \subset [1, \infty) \times \partial M$, the action of the Hamiltonian orbit $\gamma(t)$ has a simple form
\begin{equation} \label{eqn:autact}
\mathscr{A}_{H^{\tau}}(\gamma(t))= -\int_{S^1} \gamma^*(\widehat{\theta})+H^{\tau}(\gamma(t))dt=-R h^{\tau}{}'(R)+h^{\tau}(R)=-\frac{l^2}{2}-l.
\end{equation}
In particular, the action of $\gamma(t)=(l+1, x(lt))$ is given by the $y$-intercept of the tangent line to $h^{\tau}$ at $R=l+1$, which is exactly $-l^2/2-l$ in this case. We set 
\begin{equation}
a_l:=-\frac{l^2}{2}-l
\nonumber \end{equation}
in the following discussions.

\begin{figure}
\begin{center}
\includegraphics[scale=0.65]{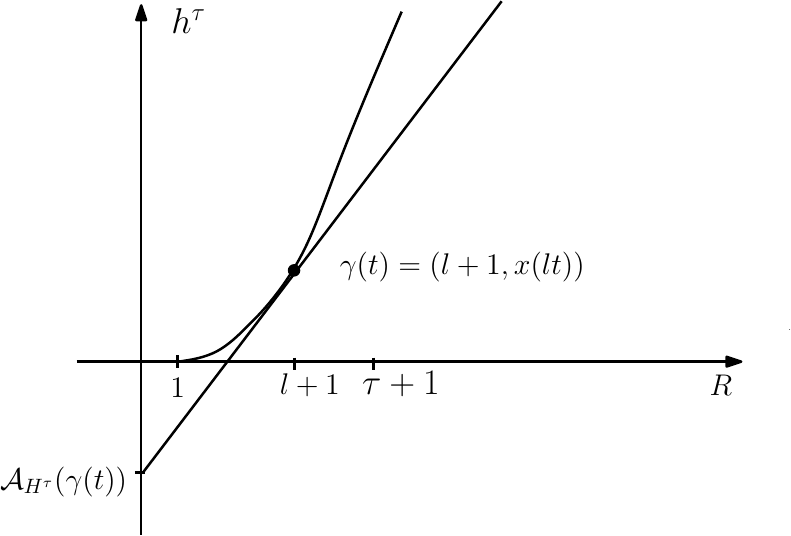}
\end{center}
\caption{Action of 1-periodic orbits of an autonomous function.}
\label{fig:1}
\end{figure}

For the perturbed Hamiltonian $H^{\tau}_t$, the actions of $1$-periodic orbits $\widehat{\gamma}$ and $\widecheck{\gamma}$ of period $l$ are given by 
\begin{eqnarray} \label{eqn:actclose}
&& \mathscr{A}_{H^{\tau}_t}(\widehat{\gamma})=\mathscr{A}_{H^{\tau}}(\gamma)+\epsilon \int_{S^1}h_{\gamma}\circ \psi_{H^{\tau}}^{-t}(\widehat{\gamma})dt =a_l+\epsilon h_0(c_0)=a_l+\frac{\epsilon}{2}, \\
&& \mathscr{A}_{H^{\tau}_t}(\widecheck{\gamma})=\mathscr{A}_{H^{\tau}}(\gamma)+\epsilon \int_{S^1}h_{\gamma}\circ \psi_{H^{\tau}}^{-t}(\widehat{\gamma})dt=a_l+\epsilon h_0(0) =a_l.\nonumber
\end{eqnarray}
This implies that for sufficiently small choices of $\epsilon>0$, the values of $\mathscr{A}_{H^{\tau}_t}(\widehat{\gamma})$ and $\mathscr{A}_{H^{\tau}_t}(\widecheck{\gamma})$ differ from that of $\mathscr{A}_{H^{\tau}}(\gamma)$ slightly. 

\subsection{The action filtration on the $S^1$-complex $CF^*(M,H^{\tau}_t)$}
Let $\mathscr{S}$ be the action spectrum of the Reeb orbits associated to the contact form $\alpha$ defined in \eqref{eqn:spectrum}. One can list all the elements of $\mathscr{S}$ in an increasing order $0=l_0 < l_1 < l_2 < \cdots$, where $l_1$ corresponds to the minimum period of a Reeb orbit. We choose $\tau_j \notin \mathscr{S}$ such that $\tau_j =\frac{l_j+l_{j+1}}{2}$. There is a natural action filtration on $CF^*(M, H^{\tau}_t)$ defined by
\begin{equation} \label{defn:af}
F^j CF^*(M,H^{\tau}_t)=\bigoplus_{\substack {\mathscr{A}_{H^{\tau}_t}(\gamma) \geq a_{\tau_j} \\ \gamma \in \mathscr{P}(H^{\tau}_t)}} \Z\langle o_{\gamma} \rangle.
\end{equation}
The following lemma implies that for specific perturbations of $H^{\tau}$ defined using \eqref{eqn:pb}, the action filtration is preserved by the $S^1$-structure. 
\begin{lem} \label{lem:preact}
For each $i \geq 0$, there is a constant $\epsilon_{i} >0$ which depends on $i$ and the time-dependent perturbation $h_t^{\tau} \in \mathscr{H}^{per}$ such that
\begin{equation}
\delta_i F^j CF^*(M,H^{\tau}_{\epsilon_{i}}) \subset  F^j CF^*(M,H^{\tau}_{\epsilon_{i}}) \text{ for all } j  \geq 0 \text{ and }\tau \notin \mathscr{S}. \nonumber
\nonumber \end{equation}
\end{lem}
\begin{proof}
Given an element $(\bar{u}, \bar{z}_i) \in \mathscr{M}_i(x_0, x_1)$ for $i\geq 0$, one can choose a fixed representative $(u,z_i) \in \widetilde{\mathscr{M}}_i(x_0, x_1)$ such that $u\colon \R \times S^1 \rightarrow \widehat{M}$ is a solution to \eqref{eqn:delta} that is asymptotic to $\gamma_0$ and $\gamma_1$ with $\mathscr{A}_{H^{\tau}_{t}} (\gamma_1)\geq a_{\tau_j}$, and $z_i \colon \R \rightarrow S^{2N+1}$ is a negative gradient flow line from $\theta_{Z_i} \cdot Z_i$ to $\theta_{Z_0} \cdot Z_0$ on $S^{2N+1}$ for some $\theta_{Z_0}$ and $\theta_{Z_i} \in S^1$. The energy of $u$ is given by
\begin{eqnarray}
 E(u)&=&\int_{\R \times S^1} ||\frac{\partial u}{\partial s}||^2 ds dt \label{eqn:energy}  \\
 & =&\mathscr{A}_{H^{\tau}_{t}}(\gamma_0)-\mathscr{A}_{H^{\tau}_{t}}(\gamma_1)+\int_{\R \times S^1} \big(\partial_s H_{N}^{z_i(s)}\big)(u(s,t)) ds dt.\nonumber
\end{eqnarray} 
We will prove that the last term in \eqref{eqn:energy} is bounded by $\epsilon \cdot i \cdot C$, where $C$ is some constant which depends on the choice of time-dependent perturbation $h_t^{\tau}$ in $\mathscr{H}^{per}$. For Hamiltonians of the forms $H^{\tau}_{t}=H^{\tau}+\epsilon h_t^{\tau}$ for the fixed time-dependent perturbation $h_t^{\tau} \in \mathscr{H}^{per}$ defined in \eqref{eqn:pb}, we extend it to a Hamiltonian $H_{\epsilon, N}^z \colon S^1 \times \widehat{M} \times S^{2N+1} \rightarrow \R$ by
\begin{equation} \label{eqn:extension}
H_{\epsilon, N}^z(x):= \beta(z)(H^{\tau}(x)+\epsilon h_{t-\theta(z)}(x))+(1-\beta(z))H^{\tau}(x),
\end{equation}
where $\beta(z)$ and $\theta(z):=\theta_z$ are defined in \eqref{eqn:H_N}. We remark that this extension is different from the extension defined in \eqref{eqn:H_N}. It gives a convenient admissible extension in $\mathscr{H}^{S^1}_N$ for the time-dependent perturbation of the autonomous Hamiltonian $H^{\tau}_{t}$ considered in \eqref{eqn:pb}. Given this explicit extension \eqref{eqn:extension}, we can compute that 
\begin{figure}
\begin{center}
\includegraphics[scale=0.75]{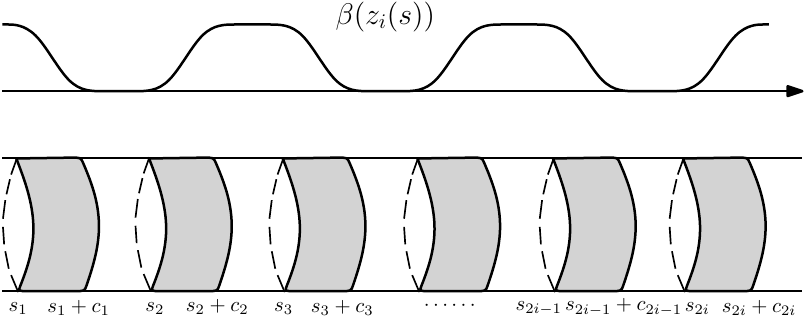}
\end{center}
\caption{The value of $\beta(z_i(s))$ and the support of $\partial_s \beta(z_i(s))$ shown as the shaded regions.}
\label{fig:3}

\end{figure}
\begin{equation}
 \int_{\R \times S^1} \big(\partial_s H_{\epsilon, N}^{z_i(s)}\big)(u(s,t)) ds dt =\int_{\R \times S^1} \big(\partial_s \beta(z_i(s)) \epsilon h_{t-\theta(z_i(s))}(u(s,t))ds dt, \nonumber
\end{equation}
where we have used the fact that the term $\beta(z_i(s)) \epsilon \partial_sh_{t-\theta(z_i(s))}(u)$ vanishes. This is due to the local triviality of the extension \eqref{eqn:extension} on $\bigcup_{z_0 \in \mathrm{Crit}(f_N)}U_{z_0}$, so $\partial_s h_{t-\theta(z_i(s))}=0$ for all $s$ satisfying $\mathrm{supp} \beta(z_i(s))$ in $\bigcup_{z_0 \in \mathrm{Crit}(f_N)}N(z_0)$. As $z_i$ is a Morse flow line from between the index zero and $2i$ critical points of the $S^1$-invariant function $\tilde{f}_N$ on $S^{2N+1}$, the support of the integrand $\partial_s \beta(z_i(s))\epsilon h_{t-\theta(z_i(s))}(u)$ is contained in the subset $\bigcup_{j=1}^{2i} I_j\times S^1$ of $\R \times S^1$ for some intervals $I_j=[s_j, s_j+c_j]$ shown in Figure \ref{fig:3}. This implies that the last term in \eqref{eqn:energy} satisfies
\begin{eqnarray} 
&&\ \ \ \ \  \int_{\R \times S^1} \big(\partial_s H_{\epsilon, N}^{z_i(s)}\big)(u(s,t)) ds dt \label{eqn:last_term} \\
&&\leq \sum_{j=1}^{2i}\int_{I_j} \partial_s \beta(z_i(s)) \int_{S^1} \epsilon h_{t-\theta(z_i(s))}(u(s,t))ds dt \nonumber \\
&& \leq \sum_{j=1}^{i} \epsilon \Big( \int_{I_{2j}} \partial_s \beta(z_i(s)) \int_{S^1} \max_{(x,t)}  h_t^{\tau}(x)dt ds\nonumber \\
&& +\int_{I_{2j-1}} \partial_s \beta(z_i(s)) \int_{S^1} \min_{(x,t)} h_t^{\tau}(x)dt ds \Big) \nonumber \\
&& \leq i \epsilon \cdot  \Big| \int_{S^1} \max_{(x,t)} h_t^{\tau}(x)dt - \int_{S^1} \min_{(x,t)} h_t^{\tau}(x)dt \Big| \nonumber,
\end{eqnarray}
where the last inequality in \eqref{eqn:last_term} is derived from the following facts
\begin{equation}
\int_{I_{2j-1}} \partial_s \beta(z_i(s)) ds \in [ -1, 0] \text{ and } \int_{I_{2j}} \partial_s \beta(z_i(s)) ds \in [0, 1], \ \forall j=1,\cdots,i.
\nonumber \end{equation}
Equation \eqref{eqn:last_term} shows that the last term can be bounded by a constant which is independent of the solution $(u, z_i) \in \widetilde{\mathscr{M}}_i(x_0,x_1)$. So we have an a priori energy estimate for the energy of solutions to equation \eqref{eqn:delta_i} for a fixed $i \geq 0$.\\
\indent The estimate \eqref{eqn:last_term} shows that for a fixed choice of time-dependent perturbation $h_t^{\tau} \in \mathscr{H}^{per}$, one can choose $\epsilon_i$ sufficiently small for each $i \geq 0$ depending on $i$ and the constant
\begin{equation}
C:=\int_{S^1} \max_{(x,t)} h_t^{\tau}(x)dt - \int_{S^1} \min_{(x,t)} h_t^{\tau}(x)dt
\nonumber \end{equation}
such that $\mathscr{A}_{H^{\tau}_{\epsilon_{i}}}(\gamma_0) \geq \mathscr{A}_{H^{\tau}_{\epsilon_{i}}}(\gamma_1)$. It remains to be verified that for each $i \geq 0$ there exist consistent choices of $\epsilon_1, \epsilon_2, \cdots, \epsilon_i$ so that the Floer data in defining the operations $\delta_0, \delta_1,\cdots, \delta_i$ are compatible with compactness of $\mathscr{M}_i(x_0, x_1)$. For a fixed $i \geq 0,$ one can first choose $\epsilon_i$ small enough such that
\begin{eqnarray}
&& \int_{\R \times S^1} \big(\partial_s H_{\epsilon_i, N}^{z_i(s)}\big)(u) ds dt < i\cdot\epsilon_iC \text{ and } \nonumber \\
&& \mathscr{A}_{H^{\tau}_{\epsilon_{i}}}(\gamma_0) > \mathscr{A}_{H^{\tau}_{\epsilon_{i}}}(\gamma_1)-i\cdot\epsilon_i\cdot C \geq a_{\tau_j},
\nonumber
\end{eqnarray}
for all $u \in \mathscr{M}_{i}(x_0, x_1)$ and for all $x_0, x_1 \in \mathrm{Crit}(\mathscr{A}_{H^{\tau}_{\epsilon_{i}}})$. Then we inductively choose $\epsilon_1, \cdots, \epsilon_{i-1}$ such that for any sequence $(u_j)_{j\in \N} \in \mathscr{M}_{i}(x_0,x_1)$ converging to some broken configuration $((v_1,\tilde{z}_{i_1}),\cdots, (v_n,\tilde{z}_{i_n}))$ in the boundary of the Gromov compactification $\partial  \overline{\mathscr{M}_i}(x_0, x_1)$
\begin{equation}
\bigcup_{n}\bigcup_{\Sigma_{k=1}^n i_k=i}\bigcup_{y_{i_1}, \cdots, y_{i_n}} \mathscr{M}_{i_1}(x_0, y_{i_1}) \times \mathscr{M}_{i_2}(y_{i_1}, y_{i_2})\times \cdots \times
\mathscr{M}_{i_n}(y_{i_n}, x_1),
\nonumber \end{equation}
the following condition is satisfied
\begin{equation}
\sum_{k=1}^n \int_{\R \times S^1} \big(\partial_s H_{\epsilon_k, N}^{\tilde{z}_k(s)}\big)(v_k) ds dt= \int_{\R \times S^1} \big(\partial_s H_{\epsilon_i, N}^{z_i(s)}\big)(u_j) ds dt <i \epsilon_i C
\nonumber \end{equation}
for all $j \gg 0$ and $n \geq 1.$
Such choices can be made inductively, since for each fixed $i \geq 0$ there are only finitely many $\epsilon_k >0$ that one needs  to choose to define the operation $\delta_k$ for $k=0,\cdots, i$ so that the condition $\sum_{k+l=i}\delta_k\delta_l=0$ is satisfied. This completes the proof.
\end{proof}

We have shown that for each $i \geq 0$ one can choose $\epsilon_i$ so that the operation $\delta_i$ defined by the Floer data $(H^z_{\epsilon_i, N}, J_N)$ preserves the action filtration \eqref{defn:af} defined on $CF^*(M, H^{\tau}_{\epsilon_i})$ for all $\tau \notin \mathscr{S}$. For $\tau_+ < \tau_-$, we denote by $(H_{s,t}, J_{s,t})$ the monotone homotopy between the Floer data $(H^{\tau_+}_{\epsilon_i}, J^+)$ and $(H^{\tau_-}_{\epsilon_i}, J^-).$ By definition, the Hamiltonian $H_{s,t}$ satisfies
\begin{equation} \label{eqn:monotone_homo}
H_{s,t}=H_s+\epsilon_i h_{s,t},
\end{equation}
where $H_s$ is the monotone homotopy between the autonomous functions $H^{\tau_+}$ and $H^{\tau_-}$ and $h_{s,t} \in \mathscr{H}^{per}$ is the monotone homotopy between the time-dependent perturbations $h_t^{\tau_+}$ and $h_t^{\tau_-}$ defined in \eqref{eqn:pb}. Lemma \ref{lem:prekappa} below will show that for each fixed $i \geq 0,$ one can choose the same time-dependent perturbations $H^{\tau^+}_{\epsilon_i}$ and  $H^{\tau^-}_{\epsilon_i}$ for $\tau_+, \tau_-$ so that the continuation maps associated to the monotone homotopy $H_{s,t}$
\begin{equation} \label{eqn:comp}
\kappa_i\colon CF^*(M, H^{\tau_+}_{\epsilon_{i}}) \rightarrow CF^{*-2i}(M, H^{\tau_-}_{\epsilon_{i}})
\end{equation}
preserves the action filtration. Furthermore, we have that 
\begin{equation}
\sum_{k+l=i}\kappa_k \delta_l-\delta_l \kappa_k=0, \ \forall i\geq 0.
\nonumber \end{equation}

\begin{lem}\label{lem:prekappa}
For each $i \geq 0$, there is a constant $\epsilon_i$ such that the operation $\kappa_i\colon CF^*(M, H^{\tau_+}_{\epsilon_{i}}) \rightarrow CF^{*-2i}(M, H^{\tau_-}_{\epsilon_{i}})$ satisfying 
\begin{equation}
\kappa_i\big( F^j CF^*(M, H^{\tau_+}_{\epsilon_{i}}) \big)\subset F^j CF^*(M, H^{\tau_-}_{\epsilon_{i}}) \text{ for all } j \geq 0 \text{ and } \tau_+ < \tau_-. \nonumber
\nonumber \end{equation}
\end{lem}

\begin{proof}
Let $\widetilde{\mathscr{M}}^{\kappa}_i(\gamma_-,\gamma_+)$ be the moduli space of solutions to \eqref{eqn:k_i} with Floer data $(H_{N,s,t}^z, J_{N,s,t}^z)$, where $H_{N,s,t}^z$ is the extension of the monotone homotopy $H_{s,t}$ in equation \eqref{eqn:monotone_homo} to $\R \times S^1 \times \widehat{M} \times S^{2N+1}$ given explicitly by
\begin{equation}
H_{N,s,t}^{z,\epsilon_i}(x)=\beta(z)(H_s(x)+\epsilon_i h_{s, t-\theta(z)}(x))+(1-\beta(z))H_{s}(x).
\nonumber \end{equation} 
For any generator $o_{\gamma_+}$ in $F^j CF^*(M,H^{\tau_+}_{\epsilon_{i}})$, we have that $\mathscr{A}_{H^{\tau_+}_{\epsilon_{i}}}(\gamma_+) \geq a_{\tau_j}$. The energy of $u$ in $\widetilde{\mathscr{M}}^{\kappa}_i(\gamma_-, \gamma_+)$ is given by
\begin{eqnarray}
E(u) &=& \mathscr{A}_{H^{\tau_-}_{\epsilon_i}}(\gamma_-)-\mathscr{A}_{H^{\tau_+}_{\epsilon_i}}(\gamma_+)\label{eqn:cenergy} \\
&+& \int_{\R \times S^1} \nabla_{\dot{z}_i(s)} H_{N,s,t}^{z_i,\epsilon_i}(u) ds dt +\int_{\R \times S^1} \partial_s H^{z_i,\epsilon_i}_{N,s,t}(u) dsdt .   \nonumber
\end{eqnarray}
Since $\partial_s H_{N,s,t}^{z,\epsilon_i} \leq 0$, equations \eqref{eqn:last_term} and \eqref{eqn:cenergy} give that 
\begin{eqnarray}
&& \mathscr{A}_{H^{\tau_-}_{\epsilon_i}}(\gamma_-) \geq \mathscr{A}_{H^{\tau_+}_{\epsilon_i}}(\gamma_+)-\int_{\R \times S^1} \nabla_{\dot{z}_i(s)} H_{N,s,t}^{z_i,\epsilon_i}(u) ds dt\nonumber \\
&&\mathscr{A}_{H^{\tau_-}_{\epsilon_i}}(\gamma_-) \geq a_{\tau_j} -i\cdot \epsilon_{i}C. \nonumber
\end{eqnarray} 
Since $i\epsilon_iC$ was taken sufficiently small as in Lemma \ref{lem:preact}, by discreteness of the action spectrum, we conclude that 
\begin{equation}
\kappa_i F^j CF^*(M, H^{\tau_+}_{\epsilon_{i}}) \subset F^j CF^*(M, H^{\tau_-}_{\epsilon_{i}}) \text{ for all } i, j
\nonumber \end{equation}
as desired.
\end{proof}
For simplicity of notations, for each $i \geq 0$ we fix a consistent choices of constants $\epsilon_1, \epsilon_2, \cdots, \epsilon_i,$ whose values depend on $i$, and define the action-preserving operations $\delta_i$ and $\kappa_i$ as in Lemmata \ref{lem:preact} and \ref{lem:prekappa}. By abuse of notations, we will suppress the choices of $\epsilon_i$ and denote the operations simply by
\begin{eqnarray}
&& \delta_i \colon CF^*(M, H^{\tau}_t) \rightarrow CF^{*+1-2i}(M, H^{\tau}_t)\nonumber \\
&& \kappa_i \colon CF^*(M, H^{\tau_+}_t) \rightarrow CF^{*-2i}(M, H^{\tau_-}_t)\nonumber
\end{eqnarray}
in the subsequent discussions.

\subsection{A convenient complex to compute $\hpl(M)$}
For $\tau_i \notin \mathscr{S}$ and $\tau_i \rightarrow \infty$ as $i\rightarrow \infty$, we choose a cofinal system of Hamiltonians $H^{\tau_i}_t$ with the specific choice of $\epsilon_{\tau_i}$ provided by Lemma \ref{lem:preact}, and define the localized periodic symplectic cohomology to be
\begin{equation} \label{eqn:psh}
\varinjlim_{i} \hp(M, H^{\tau_i}_t),
\end{equation}
where the direct limit is taken with respect to the continuation maps 
\begin{equation}
(\kappa_{i, i+1})_* \colon \hp(M, H^{\tau_i}_t) \rightarrow  \hp(M, H^{\tau_{i+1}}_t) \text{ for all }i \geq 0.
\nonumber \end{equation}
The following Lemma then implies that one can instead use the cofinal system of Hamiltonians $\{H^{\tau_i}_t\}_{i \geq 0}$ to compute $\hpl(M)$.  

\begin{lem}\label{lem:lim}
Given any cofinal system of Hamiltonians $H^{\tau_i}_t$ with $\tau_i \rightarrow  \infty$, there is a natural map $$\varinjlim_{i}  \hp(M, H^{\tau_i}_t) \rightarrow \hpl(M)$$ which induces an isomorphism.
\end{lem}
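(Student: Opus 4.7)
The plan is to deduce the lemma from the standard cofinality principle for direct limits over a directed preordered set, together with the construction of the filtered continuation maps $\kappa$ from equations \eqref{defn:kappa} and \eqref{eqn:comp}. The underlying fact is purely formal: if $\mathcal{I} \subset \mathcal{H}$ is a cofinal subsystem with respect to the preorder $\preceq$ on $\mathcal{H}$, then the restriction of the diagram $H_t \mapsto PSH^*(M, H_t)$ to $\mathcal{I}$ has the same colimit as the original diagram.

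First I would verify cofinality of $\{H^{\tau_i}_{\epsilon_i}\}_{i \geq 0}$ in $\mathcal{H}$. Given any admissible Hamiltonian $K_t \in \mathcal{H}$ of slope $\tau_K$, the assumption $\tau_i \to \infty$ produces some index $i$ with $\tau_i > \tau_K$, so $K_t \preceq H^{\tau_i}_{\epsilon_i}$ in the preorder on $\mathcal{H}$. Choosing a monotone homotopy between them yields, via the construction in \eqref{defn:kappa}, a chain map of $S^1$-complexes
\begin{equation}
\kappa^{K,i}\colon CF^*(M,K_t)[[u]] \longrightarrow CF^*(M, H^{\tau_i}_{\epsilon_i})[[u]],
\end{equation}
which localizes to a map on $PSC^*$ and induces $\kappa^{K,i}_*\colon PSH^*(M,K_t) \to PSH^*(M, H^{\tau_i}_{\epsilon_i})$. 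By the independence of $\kappa_*$ of the choice of monotone homotopy (the analogue of \eqref{eqn:pshcon} already established), these maps are compatible with the transition maps $(\kappa_{i,i+1})_*$ in the cofinal subsystem. Consequently one obtains the natural map $\Phi\colon \varinjlim_i PSH^*(M, H^{\tau_i}_{\epsilon_i}) \to PSH^*(M)$.

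Next I would verify that $\Phi$ is an isomorphism by directly checking surjectivity and injectivity at the level of the direct limits. For surjectivity, any class $[\alpha] \in PSH^*(M)$ is represented by some $\alpha \in PSH^*(M, K_t)$. Cofinality produces $i$ with $K_t \preceq H^{\tau_i}_{\epsilon_i}$, and $\kappa^{K,i}_*(\alpha) \in PSH^*(M, H^{\tau_i}_{\epsilon_i})$ has the same image as $\alpha$ in $PSH^*(M)$ by the definition of the direct limit as an identification along the continuation maps. For injectivity, suppose a class $[\beta]$ in $\varinjlim_i PSH^*(M, H^{\tau_i}_{\epsilon_i})$ maps to zero in $PSH^*(M)$; then its representative $\beta \in PSH^*(M, H^{\tau_i}_{\epsilon_i})$ becomes zero after applying the continuation map to some larger admissible $L_t \in \mathcal{H}$. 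Apply cofinality once more to find $j \geq i$ with $L_t \preceq H^{\tau_j}_{\epsilon_j}$. The composition identity for continuation maps (a consequence of the standard chain-homotopy argument in \cite{Seidel} extended to $S^1$-complexes as in \eqref{eqn:comp}) implies that $(\kappa_{i,j})_*(\beta) = 0$ in $PSH^*(M, H^{\tau_j}_{\epsilon_j})$, so $[\beta] = 0$ in $\varinjlim_i PSH^*(M, H^{\tau_i}_{\epsilon_i})$.

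The only nontrivial point — and what I would expect to be the main technical step to pin down carefully — is the compatibility of the continuation maps at the $S^1$-complex level, i.e.\ that for $H_t \preceq K_t \preceq L_t$ the two compositions of continuation maps $PSC^*(M, H_t) \to PSC^*(M, L_t)$ agree up to $S^1$-equivariant chain homotopy. This is proven by the usual gluing/parametrized-homotopy argument extended to the parametrized moduli spaces defining the $\kappa_j$, and is essentially what is needed for $PSH^*(M)$ to be a well-defined colimit in the first place; no additional argument is required beyond what is already in place in the preceding subsection.
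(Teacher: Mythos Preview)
Your proposal is correct and follows essentially the same approach as the paper: both arguments verify cofinality of the subsystem, then check surjectivity and injectivity of the natural map by chasing elements through the appropriate continuation maps and using the commutativity (up to $S^1$-equivariant chain homotopy) of the continuation-map triangles. The paper's proof is slightly terser, but the logical structure is identical.
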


\begin{proof}
The natural inclusion $\bigoplus\limits_i \hp(M, H^{\tau_i}_t) \hookrightarrow \bigoplus\limits_{H_t \text{ admissible}} \hp(M, H_t)$ induces a map of the quotients
$i\colon \varinjlim \hp(M, H^{\tau_i}_t) \rightarrow \hpl(M)$. This map $i$ is surjective because for any admissible Hamiltonian $H_t$ there is a $\tau_i$ large enough such that $H_t \preceq H^{\tau_i}_t$, and we obtain a map via continuation
\begin{equation}
\hp(M, H_t) \rightarrow \hp(M, H^{\tau_i}_t).
\nonumber \end{equation}
Similarly, the map $i$ is an inclusion since for any admissible Hamiltonians $H_t$ and $K_t$ with $H_t \preceq K_t$, there is an autonomous Hamiltonian $H^{\tau_j}_t$ such that $H_t \preceq K_t \preceq H^{\tau_j}_t$ for some $\tau_j$ large enough. The appropriate continuation maps then give rise to a commutative diagram
\[\xymatrix{
\hp(M, H_t) \ar[r] \ar[rd] &
\hp(M, K_t) \ar[d] \\
& \hp(M, H^{\tau_j}_t)}
\]
This implies that the natural map 
\begin{equation}
i\colon \varinjlim \hp(M, H^{\tau_i}_t) \rightarrow \hpl(M)
\nonumber \end{equation}
is an isomorphism.
\end{proof}
This formulation of $\hpl(M)$ in Lemma \ref{lem:lim} will be used in the next section.

\section{The Localization Theorem }\label{sec:local}
We prove the following main result in this section, which can be seen as the localization theorem for $\hpl(M)$.
\begin{thm} \label{thm:local}
Given a Liouville domain $(M, \theta)$, the natural inclusion of the constant loops $\iota\colon \widehat{M} \hookrightarrow \mathscr{L}\widehat{M}$ induces a natural map $$\iota_*\colon H^*(M)((u)) \rightarrow \hpl(M),$$ which is an isomorphism as $\Z/2$-graded $\Q((u))$-modules after tensoring both sides by $\Q$
\end{thm}
In particular, Theorem \ref{thm:local} holds for the large class of Weinstein manifolds, which includes affine varieties. \\
\indent To prove Theorem \ref{thm:local}, we first deal with autonomous Hamiltonian $H^{\tau}$ with time-dependent perturbations $H^{\tau}_t$ as in \eqref{eqn:pb}. For each $\gamma \in \mathscr{P}(H^{\tau})$ there is an isolated neighborhood $N(\gamma)$, which only contains two $1$-periodic orbits $\widehat{\gamma}, \widecheck{\gamma}$ of $H^{\tau}_t$ defined as in \eqref{defn:gamma}. One needs to compute the local contributions to the equivariant Floer differential $\delta^{S^1}$ between $o_{\widehat{\gamma}}$ and $o_{\widecheck{\gamma}}$, that is,  $d_0:=\delta^{S^1}|_{\Z\langle o_{\widehat{\gamma}},o_{\widecheck{\gamma}}\rangle \otimes_{\Z} \Z((u))}$. Following the original approach of Floer and Hofer in \cite{FH}, we choose a coherent orientation in the proof of Proposition \ref{prop:1} below. This means that the trivializations of $o_{\widehat{\gamma}}, o_{\widecheck{\gamma}}$  are fixed for all $\gamma \in \mathscr{P}(H^{\tau})$ and they are compatible with the gluing operation in the sense of \cite[Definition 11]{FH}. We denote the preferred generators of $o_{\widehat{\gamma}}$ and $o_{\widecheck{\gamma}}$ by $\widehat{\gamma}$ and $\widecheck{\gamma}$, respectively. The advantage of this approach is that the equivariant differential $\delta^{S^1}$ can be expressed explicitly as follow.

\begin{prop} \label{prop:1}
Let $\gamma$ be a $1$-periodic orbit of $H^{\tau}$ and 
$$d_0:=\delta^{S^1}|_{\Z \langle o_{\widehat{\gamma}},o_{\widecheck{\gamma}}\rangle \otimes_{\Z} \Z((u))}.$$ If $\gamma$ corresponds to a $k$-fold Reeb orbit, then we have
$$d_0 (\widecheck{\gamma} )= 0, \ \ d_0 (\widehat{\gamma})=\pm ku \widecheck{\gamma}$$ if the Reeb orbit is good, and $$d_0 (\widecheck{\gamma} )=\pm 2\widehat{\gamma},\ \ \ d_0 (\widehat{\gamma})=0$$ if the Reeb orbit is bad.
\end{prop}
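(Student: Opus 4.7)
The strategy is to localize the entire computation to the isolated neighbourhood $N(\gamma)$ and to reduce $d_0$ to a Morse--Bott count on the circle $S^1\cdot \gamma$ of $1$-periodic orbits of the unperturbed autonomous Hamiltonian $H^\tau$. I would begin with a degree reduction. Since $|\widehat\gamma|=|\widecheck\gamma|+1$, the only orbits in $N(\gamma)$ are $\widehat\gamma$ and $\widecheck\gamma$, and the operator $u^i\delta_i$ has total degree $+1$ on $CF^*(M,H^\tau_\epsilon)((u))$, only two components of $d_0$ can possibly be non-trivial: $\delta_0(\widecheck\gamma)\in\Z\langle\widehat\gamma\rangle$ and $u\delta_1(\widehat\gamma)\in u\cdot\Z\langle\widecheck\gamma\rangle$. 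All other components (for example $\delta_0(\widehat\gamma)$, or $u^i\delta_i$ on either orbit for $i\ge 2$) land in degrees not realised by any orbit in $N(\gamma)$ and so vanish locally. Thus the whole proposition reduces to two integer counts.

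Next I would compute $\delta_0(\widecheck\gamma)$ by passing to the Morse--Bott limit $\epsilon\to 0$. Before the perturbation $\epsilon h_\gamma$ is turned on, $S^1\cdot\gamma$ is a transversely nondegenerate Morse--Bott circle of $1$-periodic orbits of $H^\tau$, and the two orbits $\widehat\gamma$ and $\widecheck\gamma$ are the minimum and the maximum of $h_0\colon S^1\to\R$ respectively. A cascade / Morse--Bott analysis of the style of \cite{CFHW} and \cite{BO2} identifies rigid Floer trajectories from $\widecheck\gamma$ to $\widehat\gamma$ contained in $N(\gamma)$ with gradient flow lines of $h_0$ from its maximum to its minimum; there are exactly two such flow lines, going around $S^1$ in opposite directions. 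This gives $\delta_0(\widecheck\gamma)=(\sigma_++\sigma_-)\widehat\gamma$ for signs $\sigma_\pm\in\{\pm1\}$ inherited from the coherent orientation fixed in Section~\ref{subsec:orient}.

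For $\delta_1(\widehat\gamma)$ I would run the same Morse--Bott analysis on the parametric moduli space $\mathcal M_\Delta(\widecheck\gamma,\widehat\gamma)$, which carries an extra parameter $\theta\in S^1$ coming from the BV equation \eqref{eqn:bv}. In the limit $\epsilon\to 0$, the locally rigid solutions reduce to constant cylinders $u(s,t)=(\theta\cdot\gamma)(t)$ for those $\theta\in S^1$ such that the rotation by $\theta$ carries the critical point $\widehat\gamma$ to $\widecheck\gamma$. Because $\gamma$ is a $k$-fold cover of a simple Reeb orbit, this condition reads $k\theta\equiv c_0\pmod{1}$ and has exactly $k$ solutions, each transversely cut out by the surjectivity hypothesis following \eqref{eqn:bv}. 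The count is therefore $\delta_1(\widehat\gamma)=\bigl(\sum_{j=1}^k\tau_j\bigr)\widecheck\gamma$ for signs $\tau_j\in\{\pm1\}$.

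The crux is the sign comparison. The two Floer trajectories in the $\delta_0$ count differ by a full $S^1$-rotation of the asymptotic data, and successive BV contributions $\tau_j$, $\tau_{j+1}$ differ by a rotation by $1/k$ of the asymptotic ends. In each case the ratio of signs equals, via the gluing isomorphism \eqref{eqn:gluing}, the monodromy of the determinant line bundle $\Det(\mathcal O_-(S))$ over the loop $\theta\mapsto D_{\Psi_{\theta\cdot\gamma}}$ in the space of asymptotic operators. A standard spectral flow computation identifies this monodromy with $(-1)^{\mu_{CZ}(\gamma)-\mu_{CZ}(\gamma_{\mathrm{simp}})}$, where $\gamma_{\mathrm{simp}}$ denotes the underlying simple Reeb orbit. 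By the characterisation of bad orbits in Remark~\ref{rmk:badcz}, this monodromy is $+1$ precisely when $\gamma$ is bad and $-1$ when $\gamma$ is good, so $\sigma_++\sigma_-=0$ and $\sum\tau_j=\pm k$ in the good case, while $\sigma_++\sigma_-=\pm2$ and $\sum\tau_j=0$ in the bad case. This is exactly the statement of the proposition. The main obstacle is precisely this sign bookkeeping: making rigorous the monodromy formula for $\Det(\mathcal O_-(S))$ under $S^1$-rotation of the asymptotic end and matching it to the coherent orientation, for which the technical input of Proposition~2.2 of \cite{CFHW} seems to be essential.
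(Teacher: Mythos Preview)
Your overall architecture---degree reduction to $\delta_0(\widecheck\gamma)$ and $u\delta_1(\widehat\gamma)$, the CFHW identification of the two Floer cylinders with the two Morse flow lines of $h_0$, and the count of $k$ rigid BV solutions at $\theta_j=j/k$---matches the paper exactly (Lemmas~\ref{lem:ksol}, \ref{lem:d}, \ref{lem:delta}). The gap is in the sign analysis, and it is not just bookkeeping.

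For the BV operator your monodromy idea is essentially correct and is what the paper does: Lemma~\ref{lem:T} says the action of the $1/k$-twist operator $T$ on $\Det(\mathcal O_-(S_k))$ is orientation reversing iff $k$ is even \emph{and} $\mu_{CZ}(\Psi_{S_k})-\mu_{CZ}(\Psi_{S_1})$ is odd, i.e.\ iff $\gamma$ is bad. Writing $\epsilon(\gamma)=+1$ for good and $-1$ for bad, one gets $\Delta_{v_i}=\epsilon(\gamma)^{i-1}\Delta_{v_1}$ (Lemma~\ref{lem:delta}), hence $\sum\tau_j=\pm k$ in the good case and $0$ in the bad case (since then $k$ is even). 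Note your stated formula and parity assignment are internally inconsistent: by Remark~\ref{rmk:badcz} bad means the CZ difference is odd, so $(-1)^{\mu_{CZ}(\gamma)-\mu_{CZ}(\gamma_{\mathrm{simp}})}=-1$ for bad, not $+1$.

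For the Floer differential your argument does not work. The two trajectories $u_1,u_2$ share the \emph{same} asymptotic ends $\widecheck\gamma,\widehat\gamma$, so there is no loop of asymptotic operators over which to take a monodromy; and even if one interprets ``full $S^1$-rotation'' as the $k$-th power of the $1/k$-twist, that monodromy is $\epsilon(\gamma)^k$, which equals $+1$ in \emph{both} the good and bad cases (bad forces $k$ even). Your proposal would thus give $\sigma_+=\sigma_-$ always. The paper instead computes the ratio $d_{u_1}/d_{u_2}$ by a crossing-form (spectral flow) analysis along the two Morse flow lines $\alpha_1,\alpha_2$ (Lemma~\ref{lem:d}): the degenerate $\tilde A$-block contributes one crossing on each path with \emph{opposite} signs (coming from $\mathrm{sign}\,h_0'''$ at the two inflection points), and the long path $\alpha_2$ acquires $(k-1)n_3$ additional crossings from the negative-hyperbolic blocks $H^-_i$, yielding $d_{u_1}=(-1)\cdot(-1)^{(k-1)n_3}d_{u_2}=-\epsilon(\gamma)\,d_{u_2}$. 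The extra $-1$ from the tangent direction to $S^1\cdot\gamma$ is exactly what your asymptotic-monodromy heuristic misses.
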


It is shown in \cite[Proposition 2.2]{CFHW} that there are only two solutions to the Floer equation that are asymptotic to $1$-periodic orbits $\widehat{\gamma}$ and $\widecheck{\gamma}$ of $H^{\tau}_t$ at $\pm \infty$. We will first prove that all solutions to equation \eqref{eqn:delta} with asymptotic conditions on $\widehat{\gamma}$ or $\widecheck{\gamma}$ do not leave some tubular neighborhood of $\gamma$, which implies that the local $S^1$-equivariant Floer cohomologies are well-defined.

\begin{lem} \label{lem:no_escape}
Assume that $\gamma$ is a transversely nondegenerate $1$-periodic orbit of $X_{H^{\tau}}$ for some $\tau \notin \mathscr{S}$ and $U$ is an open neighborhood of $\gamma(S^1)$ which does not contain any other $1$-periodic orbits of $X_{H^{\tau}}.$ Then for any open neighborhood $V$ satisfying $\gamma(S^1) \subset V \subset U$, there exists a real number $\epsilon_0 >0$ such that for any $\epsilon \in (0, \epsilon_0)$ we have 
\begin{enumerate}
\item[(1)] All $1$-periodic orbits of $X_{H^{\tau}_{\epsilon, N}}$ in $U$ are contained in $V$. 
\item[(2)] All solutions $u \in \widetilde{\mathscr{M}}_i(x_0,x_1, H^{\tau}_{\epsilon, N},  J_{N}, U)$ are contained in $V$, $\forall i, N\in \N,$
where $H^{\tau}_{\epsilon, N}$ is defined in equation \eqref{eqn:extension}.
\end{enumerate}
\end{lem}

\begin{proof}
Part (1) and the case that $i=0$ of part (2) are proved in Lemma 2.1 of \cite{CFHW}. Let $u \in \widetilde{\mathscr{M}}_i(x_0,x_1,  H^{\tau}_{\epsilon, N},  J_{N}, U)$ for $i \geq 1.$ Suppose by contradiction that we can find a neighborhood $V$ of $\gamma(S^1)$ and sequences $\epsilon_n \rightarrow 0$ and parametrized Floer trajectories $u_n \in \widetilde{\mathscr{M}}_i(x_0,x_1,  H^{\tau}_{\epsilon_n, N},  J_{N}, U)$ such that $u_n$ is not contained in $V$ for each $n.$ By the energy estimate in \eqref{eqn:energy}, we have that for $\gamma_0,\gamma_1 \in\{\widehat{\gamma}, \widecheck{\gamma} \}$ the energy of $u_n$ satisfies
\begin{equation}
E(u_n)=\mathscr{A}_{H^{\tau}_{\epsilon_n}}(\gamma_0)-\mathscr{A}_{H^{\tau}_{\epsilon_n}}(\gamma_1)+\int_{\R \times S^1}(\partial_s H^{z_i(s)}_N)(u_n)dsdt.
\nonumber \end{equation}
By Gromov compactness, the sequence $u_n$ converges in $C^{\infty}_{\mathrm{loc}}$ to some $u$ as $n \rightarrow \infty$ and $\epsilon_n \rightarrow 0.$ It is shown in part (1) of Lemma 2.1 in \cite{CFHW} that as $\epsilon_n \rightarrow 0$, all $1$-periodic solution $\gamma_0(t)$ and $\gamma_1(t)$ of $H^{\tau}_{\epsilon_n}$ converges to $S^1$-translates of $\gamma$, 
$$\gamma(t+a_n) \text{ and }\gamma(t+b_n) \text{ for some }a_n, b_n \in S^1.$$ 
This implies that 
$$\mathscr{A}_{H^{\tau}_{\epsilon_n}}(\gamma_0)-\mathscr{A}_{H^{\tau}_{\epsilon_n}}(\gamma_1) \rightarrow 0,$$ 
since $\mathscr{A}_{H^{\tau}_{\epsilon_n}}(\gamma(t+a_n))-\mathscr{A}_{H^{\tau}_{\epsilon_n}}(\gamma(t+b_n)) \rightarrow 0$ as $n \rightarrow \infty$. Similarly, one also has that 
$$\int_{\R \times S^1}(\partial_s H^{z_i(s)}_{\epsilon_n, N})(u)dsdt \rightarrow 0,$$ 
since $H^{\tau}_{\epsilon_n}$ approaches to the autonomous function $H^{\tau}$ as $\epsilon_n \rightarrow 0.$ Hence the energy $E(u_n) \rightarrow 0$ as $n \rightarrow \infty.$ As the energy of the limit $u$ of $\{u_n\}_{n \geq 0}$ satisfies 
$$E(u)=\int_{\R \times S^1}||\partial_s u||dsdt=0,$$
we conclude that the parametrized Floer solution $u$ is independent of the variable $s,$ and we can write $u(s,t)=\gamma(t+a)$ for some $a \in S^1.$ However, by assumption that $u_n$ is not contained in $V$ for each $n,$ which implies that there exists $s_n \in \R$ such that $u_n(s_n,t)$ is not contained in $V.$ As $u_n$ is asymptotic to $\widehat{\gamma}$ and $\widecheck{\gamma}$ which satisfy $|\widehat{\gamma}|=|\widecheck{\gamma}|+1$ and there are no other $1$-periodic orbits in $V,$ the moduli space $\mathscr{M}_i(x_0,x_1, H^{\tau}_{\epsilon, N},  J_{N}, U)$ is compact by index reasons and exactness of $\widehat{\omega}$. Thus the sequence $u_n$ of Floer trajectories does not break in the limit and the sequence $s_n$ of real numbers converges to some $s_{\infty} \in \R.$ This gives a contradiction since $u(s_{\infty},t)$ is not contained in $V$ whereas we have shown that $u(s,t)=\gamma(t+a) \subset V.$ This completes the proof for part (2). 
\end{proof}
\indent One consequence of Lemma \ref{lem:no_escape} is that the local $S^1$-equivariant Floer cohomology can be defined since the moduli space $\mathscr{M}_i(x_0,x_1, H^{\tau}_{N}, J_{N}, U)$ is compact. Moreover, by the usual continuation arguments, the local $S^1$-equivariant Floer cohomology is independent of the choices of almost complex structure $J_N$ and the time-dependent perturbations $h_{\gamma}(t) \in \mathscr{H}^{per}$ that we choose in \eqref{eqn:pb}. One can then compute the local $S^1$-equivariant Floer cohomology between $\widehat{\gamma}$ and $\widecheck{\gamma}$ as follows. Locally in a neighborhood $N(\gamma)$, a $\Z$-grading always exists due to that fact $2c_1=0$ in $N(\gamma)$. One can conclude by degree reasons that the only operations that can be non-trivial is the Floer differential $d=\delta_0$ and the BV operator $\Delta=\delta_1$. It is then sufficient to prove the following statements
\begin{equation}\label{eqn:check_hat}
d(\widecheck{\gamma})=
\begin{cases}
 0, &\text{if } \gamma \text{ is good}\\
 \pm 2 \widehat{\gamma}, &\text{if } \gamma \text{ is bad}
\end{cases},
\Delta(\widecheck{\gamma})=
\begin{cases}
 k \widehat{\gamma}, &\text{if } \gamma \text{ is good}\\
 0, &\text{if } \gamma \text{ is bad}.
\end{cases}
\end{equation}
In order to obtain this computation, we need the following Lemmata \ref{lem:ksol}, \ref{lem:d} and \ref{lem:delta}.
\begin{lem} \label{lem:ksol}
For each $1$-periodic orbit $\gamma \in \mathscr{P}(H^{\tau})$ that corresponds to a $k$-fold Reeb orbit such that $|\widehat{\gamma}|=|\widecheck{\gamma}|+1$, the zero dimensional manifold $\mathscr{M}_{\Delta}(\widecheck{\gamma}, \widehat{\gamma})$ consists of $k$ points.
\end{lem}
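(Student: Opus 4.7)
The plan is to adapt \cite[Proposition 2.2]{CFHW} to the $S^1$-parametrized BV equation, as indicated in the paragraph preceding the Lemma. The ambient family is $H^{\tau,\theta}_\epsilon = H^\tau + \epsilon h^\theta_\gamma$ with $\theta \in S^1$, where $h^\theta_\gamma(t,\gamma(s)) = h_0(k(s-t)-k\theta)$ on the isolating neighbourhood $N(\gamma)$. The two structural inputs I would exploit are that $H^\tau$ is autonomous and $S^1$-invariant on $N(\gamma)$, and that the perturbation carries an exact $\mathbb{Z}/k$-symmetry along $\gamma(S^1)$ coming from the $k$-fold covering of the underlying simple Reeb orbit.

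First I would confine any pair $(\theta,u) \in \mathcal{M}_\Delta(\widecheck{\gamma},\widehat{\gamma})$ to $N(\gamma)$. By \eqref{eqn:actclose} the action difference $\mathcal{A}_{H^\tau_\epsilon}(\widecheck{\gamma}) - \mathcal{A}_{H^\tau_\epsilon}(\widehat{\gamma})$ is of order $\epsilon$, so repeating the energy estimate from the proof of Lemma \ref{lem:preact} with the extra homotopy correction term gives $E(u) = O(\epsilon)$. Combined with the maximum principle of \cite[Lemma 1.4]{Oan} and standard monotonicity for small-area $J$-holomorphic curves, this traps the image of $u$ inside $N(\gamma)$ once $\epsilon = \epsilon_\tau$ is sufficiently small. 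Next I would introduce symplectic tubular coordinates $(\phi,v) \in (\R/\Z) \times \R^{2n-1}$ around $\gamma$ in which $H^\tau$ is $S^1$-invariant to leading order and write $u = (\phi,v)$. The normal-form reduction of \cite[Proposition 2.2]{CFHW}, carried out with $\theta$ as an auxiliary parameter, then shows that $v \equiv 0$ and that the equation for $\phi$ collapses to the negative gradient flow of $h_\gamma^\theta(t,\cdot)$ on the circle $\gamma(S^1)$, consistent with the expected dimension $|\widecheck{\gamma}| - |\widehat{\gamma}| + 1 = 0$.

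The enumeration is then Morse-theoretic on the circle: I seek $\theta \in S^1$ together with a $-\nabla h_\gamma^\theta$-trajectory on $\gamma(S^1) \cong \R/\Z$ running from the maximum (the $\widecheck{\gamma}$-asymptotic, twisted by $\theta$) down to the minimum (the $\widehat{\gamma}$-asymptotic). Because the pullback of $h_0$ to $\gamma(S^1)$ via the $k$-fold cover is $1/k$-periodic, the asymptotic matching condition pins $\theta$ to the $k$ values $\{j/k\}_{j=0}^{k-1}$ modulo $1$, and for each such $\theta$ there is a unique short gradient trajectory, producing exactly $k$ elements of $\mathcal{M}_\Delta(\widecheck{\gamma},\widehat{\gamma})$.

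I expect the main obstacle to be making the CFHW normal-form reduction uniform in the parameter $\theta \in S^1$. Concretely, this requires a parametrized version of their implicit function theorem argument, together with transversality on the full $(s,t,\theta)$-parametrized moduli problem so that the reduction to gradient flow holds simultaneously for all $\theta$. Once those estimates are in place, the exact $\mathbb{Z}/k$-symmetry of $h_0$ delivers the count of $k$ directly.
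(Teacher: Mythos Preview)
Your proposal is correct and follows essentially the same approach as the paper's proof: both confine to the isolating neighbourhood, apply the CFHW reduction with the perturbation $\nabla h_\gamma$ replaced by the $\theta$-dependent family $f_\theta$ to kill the normal component, and then enumerate by observing that the asymptotic matching forces $\theta \in \{j/k\}_{j=0}^{k-1}$ via the $\mathbb{Z}/k$-symmetry of the perturbation on $\gamma(S^1)$. The paper makes the coordinate change more explicit---Weinstein neighbourhood $\xi\colon N(\gamma)\to S^1\times\R^{2n-1}$ followed by composition with the linear flow $\psi_K^t$ to render the orbits constant before lifting to $\R^{2n}$---but the strategy is the same, and your anticipated obstacle about uniformity of the CFHW estimate in $\theta$ is a legitimate point that the paper does not address explicitly either.
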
 

\begin{proof}
One chooses a trivialization $\xi\colon N(\gamma) \rightarrow S^1 \times \R^{2n-1}$ such that 
\begin{equation}
\xi^*(\omega_0)=\widehat{\omega} \text{ and } \xi(\widehat{\gamma})=(kt+c_0,0), \xi(\widecheck{\gamma})=(kt,0), \nonumber
\end{equation}
where $\omega_0=\sum_{i=1}^n dp_i\wedge dq_i$ is the standard symplectic form on $S^1 \times
\R^{2n-1}$ with coordinates given by $(q_1, p_1, \cdots q_n, p_n).$ In this trivialization, we consider the Hamiltonian defined by 
\begin{equation}
K\colon S^1 \times \R^{2n-1} \rightarrow \R \text{ such that }K(q_1, p_1, \cdots)=-kp_1.
\nonumber \end{equation} 
The associated Hamiltonian flow is given by $\psi_{K}^t(q_1, y)=(q_1-kt, y)$. The Hamiltonian that generates $\psi^t_K \circ \psi^t_{H^{\tau}_{\epsilon}}$ is 
$$\widetilde{H}_{\epsilon}^{\tau}(t,x):=H^{\tau}_{\epsilon}(t, \psi^{-t}_K(x))+K(x).$$
After composing with the flow $\psi_{K}^t$, the time-dependent 1-periodic orbits of $H^{\tau}_t$ can be transferred into time-independent forms
\begin{equation}
\psi^t_K \circ \xi(\widehat{\gamma}(t))=(c_0,0),\ \ \ \psi^t_K\circ \xi(\widecheck{\gamma}(t))=(0,0),
\nonumber \end{equation}
where we have used the fact that $\gamma(t+a)$ corresponds to $(ka,0)$ in terms of the coordinates on $\psi_K^t \circ \xi(N(\gamma)).$ 
It is shown in \cite[Proposition 2.2]{CFHW} that Floer trajectories $\psi_{K}^t \circ \xi(u_i)$ between $\psi^t_K \circ \xi (\widehat{\gamma})$ and $\psi^t_K \circ \xi (\widecheck{\gamma})$ corresponds to negative gradient flow lines $(a_i(s), 0)$ of $h_0$ from the Morse maximum $(c_0,0)$ to the minimum $(0,0)$. In the subsequent discussions, we work in the coordinates given by the trivialization $\psi_K^t \circ \xi \colon N(\gamma) \rightarrow S^1 \times \R^{2n-1}$ and exhibit $k$ solutions of the BV equations.\\
\indent  Let $\widetilde{H}_s$ be the trivial interpolation of the unperturbed Hamiltonian such that $\widetilde{H}_s=\widetilde{H}_{\epsilon}^{\tau}(t,x)$ for all $s \in \R$. We consider the operator
\begin{eqnarray}
&& F\colon W^{1,2}(S^1, S^1\times \R^{2n-1}) \rightarrow L^2(S^1, S^1\times \R^{2n-1}), \nonumber \\
&& F(x)=-J_0(\dot{x}(t)-X_{\widetilde{H}_s}(x(t)).\nonumber
\end{eqnarray}
The kernel of $F$ consists of constant solutions of the form $x_a(t)=(ka,0)$ in $\R \times \R^{2n-1}$. The linearized operator $DF_{x_a}\colon W^{1,2}(S^1, \R^{2n}) \rightarrow L^2(S^1, \R^{2n})$ has the same kernel as $F$. We denote the kernel of $F$ by
$$N:=\{x_a=(ka,0) \mathbin{|} a \in \R \}$$
and its $L^2$-orthogonal complement by $N^{\perp}$.

\indent  The time-dependent perturbation $h_t \in \mathscr{H}^{per}$ that we choose in \eqref{eqn:pb} is of the form
 \begin{equation}
 h_t(q_1, y):=h(\psi_K^t(q_1,y))=h_0(q_1-kt) \text{ for } (q_1, y) \in \xi(N(\gamma)).
\nonumber \end{equation}  
After precomposing with $\psi_K^t,$ it can be transferred into a time-independent form
\begin{equation}
h(q_1,y):=h_0(q_1) \text{ with }(q_1, y) \in \psi_K^t\circ \xi(N(\gamma))).
\nonumber \end{equation}
In this coordinates, the $S^1$-family of time-dependent perturbations is given by
\begin{equation}
h_{\theta, s}(q_1, y)=\begin{cases}
h_0(q_1)\ \ \ \ \ \  \text{ if } s \gg 0;\\
h_0(q_1-k\theta) \text{ if } s \ll 0.
\end{cases}
\nonumber \end{equation} 
We define an $S^1$-family of operators
\begin{eqnarray}
&& f_{\theta, s} \colon W^{1,2}(S^1,S^1\times \R^{2n-1}) \rightarrow L^2(S^1, S^1\times \R^{2n-1}), \label{eqn:f_theta}\\
&& f_{\theta, s}(x)=\nabla h_{\theta, s}(x(t)). \nonumber
\end{eqnarray}

Each solution to the BV-equation yields a map $u: \R \rightarrow W^{1,2}(S^1,  S^1\times\R^{2n-1})$ that satisfies the equation
\begin{equation}\label{eqn:BVs}
u'(s) - F(u(s))+\epsilon f_{\theta, s}(u(s))=0.
\end{equation}
One can write $u(s)=x_a(s)+y(s)$ with $x_a(s) \in N$ and $y(s) \in N^{\perp}$. Equation \ref{eqn:BVs} now becomes
\begin{eqnarray}
x_a'(s)+y'(s)&=&-\epsilon f_{\theta,s}(x_a(s))+DF_{x_a}(y(s))-\epsilon \partial_s f_{\theta, s}(x_a)y(s)\nonumber \\
&\ \ &-\epsilon Df_{\theta,s}(x_a)\cdot y(s)+O(||y(s)||^2_{W^{1,2}(S^1)}), \nonumber
\end{eqnarray} 
where we have that $F(x_a(s))=0$, $f_{\theta,s}(x_a(s)) \in N$ and $DF_{x_{a}}( y(s)) \in N^{\perp}$. Collecting terms in $N$ and $N^{\perp}$, we obtain a similar estimate as in \cite[Proposition 2.2]{CFHW} for some constant $C_1 >0$ satisfying $C_1 \rightarrow 0$ as $\epsilon \rightarrow 0$ as follows,
\begin{eqnarray}
&& \ \ ||x_a'(s)+\epsilon f_{\theta, s}(x_a(s)) ||^2_{L^2}+|| y'(s)-DF_{x_a}(y(s))||^2_{L^2} \label{eqn:estimate1} \\
&& \leq C_1 ||y(s)||^2_{W^{1,2}}+\epsilon \sup|\partial_s f_{\theta, s}(x_a)|\cdot||y(s)||_{W^{1,2}}, \nonumber
\end{eqnarray}
where $L^2$ and $W^{1,2}$ indicates that the norms of the corresponding elements are taken in $L^2(S^1, S^1 \times \R^{2n-1})$ and $W^{1,2}(S^1,S^1\times \R^{2n-1})$ respectively. Since this estimate holds for any $\epsilon >0$, as $\epsilon \rightarrow 0$ and we have that $C_1 \rightarrow 0$, the right hand side of \eqref{eqn:estimate1} approaches zero. It must be the case that $y'(s)-DF_{x_a}y(s)=0$ in \eqref{eqn:estimate1}. It is shown in \cite[Proposition 3.14]{RS} that the Fredholm operator $\frac{\partial}{\partial s}-DF_{x_a}$ is injective on $N^{\perp}$, where $DF_{x_a}$ is invertible. This implies that there is a constant $C_2 >0$ so that
\begin{equation}
||y'-DF_{x_a}y||_{L^2} \geq C_2||y||_{W^{1,2}} \text{ for } y \in N^{\perp}.
\nonumber \end{equation}
This implies that all solutions $u(s)=x_a (s)+y(s)$ to the BV equation satisfy $y(s) \equiv 0$ for $y(s) \in N^{\perp}.$ Setting $u(s)=x_a(s)$ in equation \eqref{eqn:BVs}, we obtain that solutions to the BV equation \eqref{eqn:bv} are given by $x_a \colon \R \rightarrow S^1 \times \R^{2n-1}$ satisfying
\begin{eqnarray}
&&  \ \ \ \ \ \ \ x_a '(s)+\epsilon f_{\theta,s}(x_a (s))=0, \label{eqn:morseflow}\\
&& \ \  \ \ \ \ \displaystyle\lim_{s \rightarrow -\infty} (x_a (s)+k\theta, 0)=( k\theta,0), \displaystyle\lim_{s \rightarrow + \infty} (x_a (s), 0)=(c_0,0), \nonumber \\
&& \text{ or, }\displaystyle\lim_{s \rightarrow -\infty} x_a (s)=0, \displaystyle\lim_{s \rightarrow + \infty} x_a (s)=c_0.\nonumber
\end{eqnarray}
We will show that there are exactly $k$ solutions to equation \eqref{eqn:morseflow} when $\theta=\theta_i$ and $\theta_i=\frac{i}{k}-\frac{\epsilon_0}{k}$ for $i=1, \cdots, k$. Since a Hamiltonian orbit $\gamma$ that corresponds to a $k$-fold Reeb orbit is of the form $\gamma(t)=(kt,0)$ under the trivialization $\xi$, the pullback Morse function $\gamma^*h_0$ on the domain of $\gamma$ has $k$ maxima at $M_i:=\frac{c_0}{k}+\frac{i}{k}$ and $k$ minima at $m_i:=\frac{i}{k}$, which are shown in Figure \ref{fig:2}. The system of equations \eqref{eqn:morseflow} becomes
\begin{eqnarray}
&&  \tilde{x}_a '(s)+\epsilon \tilde{f}_{\theta,s}(\tilde{x}_a (s))=0, \text{ where } \tilde{f}_{\theta,s}=\nabla (\gamma^*h_{\theta_, s}),\label{eqn:morseflow1}\\
&& \displaystyle\lim_{s \rightarrow -\infty} \tilde{x}_a (s)=m_i=\frac{i}{k}, \displaystyle\lim_{s \rightarrow + \infty} \tilde{x}_a (s)=M_0=\frac{c_0}{k},\ \ i=1,\cdots ,k.\nonumber
\end{eqnarray}
One observes that equation \eqref{eqn:morseflow1} is equivalent to the negative gradient flow line equation for the family of Morse functions defined by $\tilde{h}_{\theta, s}(\tilde{q}_1)=\epsilon h_0(\tilde{q}_1-\rho_{\theta}(s))$ for some smooth bump function $\rho_{\theta}\colon \R \rightarrow [0, \theta] \subset [0, k)$ such that 
$$\rho_{\theta}(s) \rightarrow 0 \text{ as }s \rightarrow \infty, \ \ \rho_{\theta}(s) \rightarrow \theta \text{ as }s \rightarrow -\infty.$$
For each fixed $\theta \in [0,k)$, we denote by $\phi^{\theta}_s$ the gradient flow of $\tilde{h}_{\theta, s}$ at the time $s \in \R.$ Given a solution $\tilde{x}_a$ to equation \eqref{eqn:morseflow1}, the map $x:=\phi_{-s}^{\theta} \circ \tilde{x}_a \colon \R \rightarrow S^1$ satisfies the continuation equation for the trivial family defined by $\tilde{f}_s=\nabla (\gamma^*h_0)$ for all $s \in \R$ given by
\begin{equation}\label{eqn:morseflow2}
 x'(s)+\epsilon \tilde{f}_s(x(s))=0.
\end{equation}
By index reasons, there is a unique constant solution at the maximum $x(s)=\frac{c_0}{k}$ to the equation \eqref{eqn:morseflow2} and satisfies asymptotic conditions $x (s) \rightarrow c_0/k$ as $s \rightarrow \pm\infty$. It gives rise to a unique solution $\tilde{x}_{a_i}(s):=\phi^{\theta_i}_s\circ x(s)=\phi^{\theta_i}_s(\frac{c_0}{k})$ to equation \eqref{eqn:morseflow1} satisfying 
$$\tilde{x}_{a_i}(s) \rightarrow \frac{c_0}{k} \text{ as } s \rightarrow \infty, \ \tilde{x}_{a_i}(s) \rightarrow \frac{i}{k} \text{ as }s \rightarrow -\infty$$
for some $\theta_i$ and each $i=1,\cdots,k$. We denote the solution to equation \eqref{eqn:morseflow1} when $\theta=\frac{i}{k}-\frac{\epsilon_0}{k}$ by $a_i(s):=\tilde{x}_{a_i}(s)$ for simplicity. This shows that there are $k$ solutions $x_{a_i}(s)=(k\tilde{x}_{a_i}(s),0)=(ka_i(s),0)$ to the original equation \eqref{eqn:morseflow}. Transferring back to the original coordinates, we have that $k$ solutions to the BV-equation are of the form
\begin{eqnarray}
&& v_i\colon \R \times S^1 \rightarrow S^1 \times \R^{2n-1}, \ \ i=1, \hdots, k,\nonumber\\
&& v_i(s,t)=\psi^{-t}_K(x_{a_i(s)})=(ka_i(s)+kt, 0)\nonumber
\end{eqnarray}
as claimed.
\begin{figure}
\begin{center}
\includegraphics[scale=0.7]{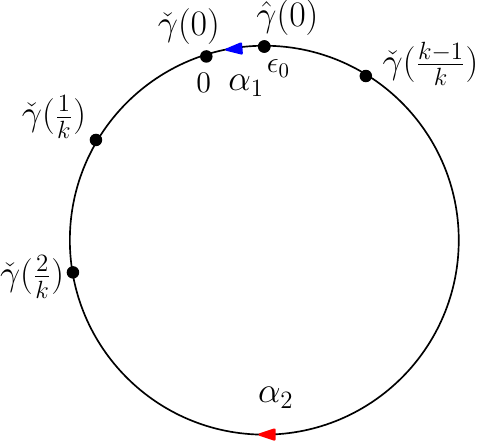}
\end{center}
\caption{Flow lines $\alpha_1$, $\alpha_2$ and the BV solutions from $\widecheck{\gamma}(t+\frac{i}{k})$ and $\widehat{\gamma}(t)$. Here $\epsilon_0=c_0/k$. (\textit{The labels reflect the values on the image $\gamma(S^1)$}).}
\label{fig:2}
\end{figure}
\end{proof}
Next, for $u_i \in \mathscr{M}(\widehat{\gamma}, \widecheck{\gamma}), i=1,2$ we compute $d_{u_i}$  defined in \eqref{eqn:d_u}. Since we have chosen trivializations of $o_{\widehat{\gamma}}$ and $o_{\widecheck{\gamma}}$, the isomorphism $d_{u_i}$ is given by multiplication by $\pm 1$. One notices that the computation result for the Floer differential $d$ in Lemma \ref{lem:d} below coincides with that of Bourgeios and Oancea \cite{BO2} in the Morse-Bott homology setting.
\\
\indent It is shown in \cite[Proposition 2.2]{CFHW} that the two Floer trajectories between $\widehat{\gamma}$ and $\widecheck{\gamma}$ explicitly are given by
\begin{eqnarray}
&& u_i\colon \R \times S^1 \rightarrow S^1 \times \R^{2n-1}\nonumber \\
&& u_i(s,t)=(k\alpha_i(s)+kt,0),\nonumber
\end{eqnarray}
where $\alpha_1$ and $\alpha_2$ are Morse flow lines of $h_0\colon S^1 \rightarrow \R$ (see Figure \ref{fig:2}) such that 
\begin{equation}
\displaystyle\lim_{s \rightarrow -\infty} \alpha_i(s)=c_0, \ \ \displaystyle\lim_{s \rightarrow \infty} \alpha_i(s)=0 \text{ for } i=1,2.
\nonumber \end{equation}

We denote by $\Psi(s,t)$ the linearizations of the flows of the Hamiltonian vector fields $X_{H^{\tau}_{\epsilon}(t)}$ on the image of a Floer trajectory $u \in \widetilde{\mathscr{M}}(\widehat{\gamma}, \widecheck{\gamma})$. 
For $\Psi(s,t)$ in $\Sp(2n)$, one considers symmetric matrices $S(s,t)$ defined by
\begin{equation}
\frac{\partial}{\partial s} \Psi(s,t)=J_0 S(s,t)\Psi(s,t).
\nonumber \end{equation} 
The parallel transport map $d_u \colon o_{\widecheck{\gamma}} \rightarrow o_{\widehat{\gamma}}$ is determined by the spectral flow of this family of self-adjoint operators
\begin{equation}
 s \mapsto A(s):=J_0\frac{\partial}{\partial t}+ S(s,\cdot) \text{ on } L^2(S^1, \R^{2n}).
\nonumber \end{equation}
A crossing $s$ of $A(s)$ is a real number such that $\ker(A(s))\neq 0$ and the crossing form is defined as
\begin{equation}
q(A, s) \colon \ker(A(s)) \rightarrow \R, \ \ q(A(s), s)\xi=\langle \xi, \dot{A}(s)\xi \rangle.
\nonumber \end{equation} 
In particular, Lemma 7.43 in \cite{RS} implies that the path of unbounded self-adjoint operators $A(s)$ has the same crossings as the symplectic path $s \mapsto \Psi(s,1)$ and the crossing forms are isomorphic. A crossing of $\Psi(s,1)$, a real number $s_0 \in [0,1]$ such that $\det(\Psi(s_0,1)-\mathbb{I})=0$, is called a regular crossing if the crossing form $q(\Psi(s,1))\colon \ker(\Psi(s_0,1)-\mathbb{I}) \rightarrow \R$ given by
\begin{equation}
q(\Psi(s,1))v=\omega_0(v, \frac{\partial}{\partial s} \Psi(s,1)\cdot v)=\langle v, S(s, 1) v \rangle
\nonumber \end{equation}
is nondegenerate at $s=s_0$. A crossing is called simple if $\ker(\Psi(s_0,1)-\mathbb{I})$ is one dimensional. For a path with simple crossings, one has $d_u$ is the product of the signs of the crossing form at various crossings $s_0$ in $[0,1]$.\\
\indent We now denote by $\Psi_{\epsilon}^i(s,t)$, $\Psi^i(s,t)$ and $\Phi_{\epsilon}^i(s,t)$ the linearizations of the flows of the Hamiltonian vector fields $X_{H^{\tau}_{\epsilon}}, X_{H^{\tau}}$ and $X_{\epsilon h_{\gamma}}$ on the image of $u_i(s,t)$ respectively. It is proved in Proposition 2.2 \cite{CFHW} that for $\epsilon$ sufficiently small $\Psi_{\epsilon}^i(s,t)$ and $\Psi^i(s,t)\cdot\Phi_{\epsilon}^i(s,t)$ are homotopic with end points via
\begin{equation}
L(r,s,t)=\Psi_{r\epsilon}^i(s,t)\Phi^i_{(1-r)\epsilon}(s,t), \ \ (r,s,t) \in [0,1]\times \R \times [0,1]
\nonumber \end{equation}
for $i=0,1.$ By the homotopy invariance property, it suffices to compute the contributions $d_{u_i}$ for the path of symplectic matrices $\Psi^i(s,1)\cdot\Phi_{\epsilon}^i(s,1)$. A detailed computation of the signs of the crossing form is given in the following Lemma.  

\begin{lem} \label{lem:d}
For $u_i \in \mathscr{M}(\widehat{\gamma}, \widecheck{\gamma}), i=1,2$, we have $d_{u_1}=-\epsilon(\gamma)d_{u_2}$, where $\epsilon(\gamma)$ is equal $1$ if $\gamma$ is good and $-1$ if $\gamma$ is bad.
\end{lem}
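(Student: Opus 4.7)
The plan is to compute each $d_{u_i} \in \{\pm 1\}$ via the Robbin--Salamon spectral flow formula (Lemma 7.2 of \cite{RS} cited just above), which expresses $d_{u_i}$ as the signed count of regular eigenvalue-$1$ crossings of the symplectic path $s \mapsto \Psi^i_\epsilon(s,1)$, with signs given by the crossing form $q$. Using the homotopy $L(r,s,t)$ from the preamble, I may equivalently tally crossings along the path $s \mapsto \Psi(1+\alpha_i(s))\bigl(I + \epsilon h_0''(\alpha_i(s))J_0B\bigr)$, which covers the short arc of $S^1 \setminus \{0, c_0\}$ for $i=1$ and the long arc for $i=2$; together these traverse $S^1$ once.

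I would then use the block decomposition $\Psi(t) = A(t) \oplus \bigoplus_i E_i(t) \oplus \bigoplus_i H_i^+(t) \oplus \bigoplus_i H_i^-(t)$ of the preamble. Since $B = \Diag(1,0,\ldots,0)$, the perturbation $J_0B$ is supported on the $A$-block only, and the perturbed linearized flow is block-diagonal with perturbation localized to the Reeb direction. Crossings thus split into contributions from each block, which may be analyzed separately. For the $A$-block, direct computation gives
\[
\det\!\bigl(A(1+a)(I + \epsilon h_0''(a) J_0 B) - I\bigr) = -\epsilon C(1+a) h_0''(a),
\]
so crossings occur precisely at the inflection points of $h_0$, and the crossing form at each such point, extracted to leading order in $\epsilon$, is proportional to $h_0'''(a_0) \cdot \alpha_i'(s_0)$. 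The generic elliptic ($E_i$) and positive-hyperbolic ($H_i^+$) blocks contribute crossings determined by the winding of their eigenvalues around $1$. The negative-hyperbolic blocks $H_i^-$, with their piecewise $2k$-piece structure involving $R_\pi$ and $P_{b_i/k}$, contribute further crossings whose parity reflects the iteration count $k$.

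Summing the block contributions and comparing $d_{u_1}$ with $d_{u_2}$, the resulting parity is expected to align with the Conley--Zehnder iteration formula characterizing bad orbits (Remark \ref{rmk:badcz}): for good $\gamma$ it yields $d_{u_1} = -d_{u_2}$ (so $\epsilon(\gamma) = 1$), and for bad $\gamma$ it yields $d_{u_1} = d_{u_2}$ (so $\epsilon(\gamma) = -1$), which is the claim $d_{u_1} = -\epsilon(\gamma) d_{u_2}$.

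The main obstacle is the careful sign bookkeeping across the blocks. At the inflection points of $h_0$, the $A$-block crossings are geometrically degenerate: the unperturbed symplectic matrix has eigenvalue $1$ with algebraic multiplicity $2$ but geometric multiplicity $1$, so the crossing form must be extracted from the sub-leading term in $\epsilon$ via Taylor expansion of $S^i(s)$. The $H_i^-$ analysis requires tracking eigenvalue-$1$ crossings through a piecewise concatenation of $2k$ symplectic matrices while preserving the coherent orientation conventions of Floer--Hofer throughout the homotopy $L(r,s,t)$, and verifying that the resulting parity matches the Conley--Zehnder parity dichotomy that defines good versus bad orbits.
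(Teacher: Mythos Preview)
Your approach is essentially the paper's: compute $d_{u_i}$ as a spectral flow via crossing forms along $s\mapsto\Psi^i(s,1)\Phi^i_\epsilon(s,1)$, using the block decomposition. Two points sharpen what you have into an actual proof.

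First, your worry about the $A$-block crossing being ``degenerate'' and requiring a sub-leading expansion in $\epsilon$ is misplaced. At the inflection point $h_0''(a_0)=0$ the matrix $\tilde A(a_0)=\bigl(\begin{smallmatrix}1 & Ca_0\\ 0 & 1\end{smallmatrix}\bigr)$ has a one-dimensional geometric $1$-eigenspace spanned by $e_1$, and the crossing form is evaluated on that one-dimensional kernel directly: one reads off the $(1,1)$-entry of $S^A(\alpha_i(s_0^i))=-J_0\,\tfrac{d}{ds}\tilde A\cdot\tilde A^{-1}$, which is $\pm\epsilon\,h_0'''(\alpha_i(s_0^i))$. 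Since the two inflection points have $h_0'''$ of opposite sign, the $A$-block contributes one regular crossing to each $u_i$ with opposite signs. No higher-order expansion is needed.

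Second, and this is where the actual content lies, your treatment of the remaining blocks is too vague. The elliptic and positive-hyperbolic blocks contribute \emph{no} crossings along either arc (so the short-arc path $\alpha_1$ has only the single $A$-block crossing). The entire discrepancy between $d_{u_1}$ and $d_{u_2}$ beyond the $A$-block comes from the negative-hyperbolic blocks $H_i^-$ along the long arc $\alpha_2$. The paper computes these explicitly: using the trace condition
\[
\Tr\bigl(R_\pi(\alpha_2(s))P_{b_i/k}(1)\cdots R_\pi(1)\bigr)=2,
\]
one finds exactly one crossing in each interval $\alpha_2(s)\in(j/k,(j+1)/k)$ for $j=1,\dots,k-1$, i.e.\ $(k-1)$ crossings per $H^-$-block. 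The crossing form there is $S_i^-(\alpha_2(s))=\Diag(e^{-2jb_i/k}\pi\alpha_2'(s),\,e^{2jb_i/k}\pi\alpha_2'(s))$, which is negative definite because $\alpha_2'<0$. With $n_3$ such blocks this gives the factor $(-1)^{(k-1)n_3}$, and $(k-1)n_3$ is odd precisely when $\gamma$ is bad. That trace computation and the sign of $\alpha_2'$ are the substance of the lemma; without them the ``parity reflects the iteration count'' remains a hope rather than a proof.
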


\begin{proof}[Proof of Lemma \ref{lem:d}]
Let $\mathscr{SP}(2n)$ be the space of path $\alpha \colon [0,1] \rightarrow \mathrm{Sp}(2n)$ such that 
\begin{equation}
\alpha(0)=\mathbb{I} \text{ and } \alpha(1) \in \mathrm{Sp}^*(2n)=\{ A \in \mathrm{Sp}(2n) \mathbin{|} \det(A-\mathbb{I})\neq 0\}.
\nonumber \end{equation}
A symplectic matrix in $\mathrm{Sp}^*(2n)$ is called \textit{semi-simple} if it can be written as the direct sum of block matrices (with distinct eigenvalues) of the following types:
\begin{enumerate}
\item[(I)]
\text{Eigenvalues in the unit circle: }
$E_{\theta }=\begin{pmatrix}
    \cos \theta & -\sin\theta \\
    \sin \theta &  \cos\theta
  \end{pmatrix}, \ \ \theta \in S^1;$
\item[(II)]
\text{Eigenvalues in the real line: }
 $H_{\lambda}=\begin{pmatrix}
    \lambda &0\\
    0& \frac{1}{\lambda}
  \end{pmatrix}, \ \ \lambda \in \R;$
\item[(III)]
\text{Eigenvalues are quadruples: }
$Q_{a,b}=\begin{pmatrix}
    a & 0 & -b & 0\\
    0 & \frac{a}{a^2+b^2} & 0 & -\frac{b}{a^2+b^2} \\
    b & 0 & a & 0\\
    0 & \frac{b}{a^2+b^2} & 0 & \frac{a}{a^2+b^2}
  \end{pmatrix},\\ (a\pm ib)^{\pm 1} \text{ are eigenvalues}.$
\end{enumerate}
By density of semi-simple matrices \cite[Proposition 26]{Gutt}, one can assume $\Psi(1)$ is semi-simple. 
The contribution $d_{u_i}$ to the differential is a locally constant function of the path components of $\mathscr{SP}(2n)$. In particular, one can deform the path $\Psi(t)$ to another path $\Psi'(t)$ via within the same path components such that the end point $\Psi'(1)$ of $\Psi'(t)$ has no blocks of type (III). This is achieved by degenerating the quadruple of eigenvalues $\lambda$, $\bar{\lambda}$, $\frac{1}{\lambda}$ and $\frac{1}{\bar{\lambda}}$ to either a pair of eigenvalues $\theta, \bar{\theta}$ on the unit circle with multiplicities two, or to a pair of positive eigenvalues $\lambda^+, \frac{1}{\lambda^+}$ if $\lambda >0$ and $\lambda^-, \frac{1}{\lambda^-}$ if $\lambda <0$ with multiplicities two respectively. Therefore without lost of generality, we can assume that the linearized return map $\Psi(1)$ is a direct sum of blocks of type (I) and (II). One can further choose the trivialization $\gamma^*T\widehat{M}$ along $\gamma$ such that $\Psi(t)$ has the following form for convenience
\[ \left( \scalemath{0.85}{  \begin{array}{cccccccccc}
A(t) &       &        &           &        &      &            &        &       & \\
    & E_1(t) &        &           &        &      &            &        &       & \\
    &       & \ddots  &           &        &      &            &        &       & \\ 
    &       &        & E_{n_1}(t) &        &      &            &        &       & \\ 
    &       &        &           &H^+_1(t) &      &            &        &       & \\ 
    &       &        &           &        &\ddots &            &        &       & \\ 
    &       &        &           &        &      &H^+_{n_2}(t) &        &       & \\
    &       &        &           &        &      &            &H^-_1(t) &       & \\ 
    &       &        &           &        &      &            &        &\ddots  & \\ 
    &       &        &           &        &      &            &        &     &H^-_{n_3}
\end{array} }\right) \]

\begin{eqnarray}
&& \text{where } A(t) =\begin{pmatrix}
    1&Ct\\
    0&1
  \end{pmatrix}
\text{for some } C>0, \nonumber\\
&& 
E_j(t)=R_{\theta_j}(t)=\begin{pmatrix}
    \cos (\theta_j t) & -\sin(\theta_j t)\\
    \sin (\theta_j t) &  \cos(\theta_j t)
  \end{pmatrix}
\text{, } \nonumber \\
&& H_i^+(t)=P_{a_i}(t)=\begin{pmatrix}
    e^{a_i t} & 0\\
    0  &  e^{- a_i t}
  \end{pmatrix},\nonumber
\end{eqnarray}
 where $\theta_j \in S^1$ and $a_i=\ln(\lambda^+_i)$. Here $H^-_i(t)$ is homotopic with fixed end points to the concatenation of $2k$ paths of symplectic matrices given by
\begin{equation}
R_{\pi}(t),\ \ P_{\frac{b_i}{k}}(t)R_{\pi}(1),\ \ R_{\pi}(t)P_{\frac{b_i}{k}}(1)R_{\pi}(1), \ \hdots, \ \ \underbrace{P_{\frac{b_i}{k}}(t)R_{\pi}(1)P_{\frac{b_i}{k}}(1)\hdots R_{\pi}(1)}_\text{product of 2k matrices},
\nonumber \end{equation}
where $b_i=\ln(-\lambda^-_i)$ for some $\lambda^-_i <0$. The resulting concatenated path is piece-wise smooth and continuous when $t=j/2k$ for all $j=1,2 \hdots, 2k-1$.\\
\indent Due to simple form of the solutions $u_i(s,t)=(k\alpha_i(s)+kt, 0)$ to the Floer equation in $N(\gamma)$, the linearizations can be computed as follow
\begin{equation}
\Psi^i(s,t)=\Psi(t+\alpha_i(s)) \text{ and } \Phi_{\epsilon}^i(s,t)=e^{\epsilon h''_0(\alpha_i(s))tJ_0B},
\nonumber \end{equation}
where $B=\Diag(1,0, \cdots, 0)$ and $\Psi(t)$ is the linearization of the flow of the unperturbed Hamiltonian $\psi^t_{H^{\tau}}$. The matrix $$\Psi^i(s,1)\cdot\Phi_{\epsilon}^i(s,1)=\Psi(\alpha_i(s))e^{\epsilon h''_0(\alpha_i(s))J_0B}$$ can be evaluated and equals to
\begin{equation} 
\resizebox{0.95\hsize}{!}{$\Diag(\widetilde{A}(\alpha_i), E_1(\alpha_i), \hdots, E_{n_1}(\alpha_i), H^+_1(\alpha_i), \hdots, H^+_{n_2}(\alpha_i), H^-_1(\alpha_i), \hdots, H^-_{n_3}(\alpha_i))$}\nonumber,
\nonumber \end{equation}
where
\begin{eqnarray}
\widetilde{A}(\alpha_i(s)) &=&
\begin{pmatrix}
  1 & C\alpha_i(s) \\ 0  & 1
\end{pmatrix}
\begin{pmatrix}
  1 &  0 \\ \epsilon h''(\alpha_i(s)) & 1 
\end{pmatrix}
\nonumber \\
&=&\begin{pmatrix}
  1 + C\alpha_i(s) \epsilon h''(\alpha_i(s)) & C\alpha_i(s) \\ \epsilon h''(\alpha_i(s))  & 1
\end{pmatrix}. 
\nonumber
\end{eqnarray}
After reparametrizing $\Psi^i(s,1)\cdot\Phi_{\epsilon}^i(s,1)$ as in Remark \ref{rmk:repara} below, one obtains loops of symmetric matrices $S^i(s)$ given by

\[ \left( \scalemath{0.7}{ \begin{array}{cccccccccc}
S^A(\alpha_i(s)) &       &        &           &        &      &            &        &       & \\
    & S_1^E(\alpha_i(s)) &        &           &        &      &            &        &       & \\
    &       & \ddots  &           &        &      &            &        &       & \\ 
    &       &        & S_{n_1}^E(\alpha_i(s)) &        &      &            &        &       & \\ 
    &       &        &           &S_1^+(\alpha_i(s)) &      &            &        &       & \\ 
    &       &        &           &        &\ddots &            &        &       & \\ 
    &       &        &           &        &      &S_{n_2}^+(\alpha_i(s)) &        &       & \\
    &       &        &           &        &      &            &S_1^-(\alpha_i(s)) &       & \\ 
    &       &        &           &        &      &            &        &\ddots  & \\ 
    &       &        &           &        &      &            &        &     &S_{n_3}^-(\alpha_i(s))
\end{array}} \right) \]

\begin{equation} \label{eqn:det1}
\text{where  }
S^A(\alpha_i(s)))=-J_0 \cdot \frac{d}{ds} \widetilde{A}(\alpha_i(s)) \cdot \widetilde{A}(\alpha_i(s))^{-1}.
\end{equation} 
As $\Psi^i(s,1)\cdot                                                                                                                                   \Phi_{\epsilon}^i(s,1)$ is in block diagonal form, and generically one can choose the Morse function $h_{\gamma}(t)\colon N(\gamma) \rightarrow \R$ such that all crossings of $\Psi^i(s,1)\cdot\Phi_{\epsilon}^i(s,1)$ occur in different time. Thus it suffices to analyze the crossings for each block. For the first block $\widetilde{A}(\alpha_i(s))$ with $i=1,2$, there exists a crossing $s_0^i \in \R$ such that $h''_0(\alpha_i(s_0^i))=0$ so that $\det(\widetilde{A}(\alpha_i(s_0^i))-\mathbb{I})=0$. This crossing is transverse and $\ker(\Psi^i(s_0^i,1)\cdot\Phi_{\epsilon}^i(s_0^i,1)-\mathbb{I})=\Span\{e_1\}$ with $e_1=(1,0, \hdots, 0)$. By \eqref{eqn:det1}. One computes the $(1,1)$-entry of $S_A(\alpha_i(s))$ and observes that
\begin{equation}
q(\Psi^i(s_0^i,1)\cdot\Phi_{\epsilon}^i(s_0^i,1))e_1= -\epsilon h'''_0(\alpha_i(s)).
\nonumber \end{equation}
Since $\sign(h_0'''(\alpha_1(s_0^1)))=-\sign(h_0'''(\alpha_2(s_0^2)))$, we have that
\begin{equation}
\sign(q(\Psi^1(s_0^1,1)\cdot\Phi_{\epsilon}^1(s_0^1,1)))=-\sign(q(\Psi^2(s_0^2,1)\cdot\Phi_{\epsilon}^2(s_0^2,1)))
\nonumber \end{equation}
for crossings $s_0^1$ and $s_0^2$. One observes that $s_0^1$ is the only crossing of $\Psi^1(s,1)\cdot\Phi_{\epsilon}^1(s,1)$ as $c_0 \in S^1$ is taken to be sufficiently closed to zero in the definition of the Morse function $h_0$. For $\Psi^2(s,1)\cdot\Phi_{\epsilon}^2(s,1)$, each block matrix $H^-_i(\alpha_2(s))$ contributes $(k-1)$ more crossings precisely when 
\begin{eqnarray}
&&\ \ \  \Tr \big( \underbrace{ R_{\pi}(\alpha_2(s))P_{\frac{b_i}{k}}(1)\hdots R_{\pi}(1)}_\text{product of 2j+1 matrices, $j \geq 1$} \big)  \label{eqn:trace} \\
&&=\Tr
\begin{pmatrix} 
 e^{\frac{jb_i}{k}}\cos(\alpha_2(s)) &  -e^{\frac{jb_i}{k}}\sin(\alpha_2(s)) \\ e^{-\frac{jb_i}{k}}\sin(\alpha_2(s)) & e^{-\frac{jb_i}{k}}\cos(\alpha_2(s))
\end{pmatrix}=2. \nonumber
\end{eqnarray}
For each $ 1 \leq j \leq k-1$, there is only one $s_j \in \R$ such that \eqref{eqn:trace} is satisfied, that is to say that $\alpha_2(s_j)$ is the only crossing in $(\frac{j}{k}, \frac{j+1}{k})$ for each $j$. The kernel of $H^-_i(\alpha_2(s_j))-\mathbb{I}$ is 1-dimensional. For each crossing $s_j$, we compute the sign of the crossing form by observing that 
\begin{equation} \label{eqn:det2}
S_i^-(\alpha_2(s))=\begin{pmatrix}
 e^{-\frac{2jb_i}{k}}\pi \alpha'_2(s) &  0 \\ 0 & e^{\frac{2jb_i}{k}}\pi \alpha'_2(s) 
\end{pmatrix},
\end{equation} 
is negative definite when $\alpha_2(s) \in (\frac{j}{k}, \frac{2j+1}{k})$ since $\alpha'_2(s) < 0$ for all $s \in \R$. This implies that 
$$\sign(q(\Psi^2(s_j,1)\cdot\Phi_{\epsilon}^2(s_j,1))<0 \text{ for all }j=1,\hdots,k-1$$ 
on the $1$-dimensional kernel. Since there are $n_3$ block matrices whose eigenvalues are negative in the linearized return map $\Psi(1)$, the total contributions of all the crossings provide us the relation
\begin{equation}
d_{u_1}=(-1)\cdot(-1)^{(k-1)n_3} d_{u_2}.
\nonumber \end{equation}
One observes that $(-1)^{(k-1)n_3}=-1$ if and only if both $k-1$ and $n_3$ are odd, which precisely corresponds to the definition of bad Reeb orbits in \eqref{defn:bad}. Therefore, we conclude that $d_{u_1}=-\epsilon(\gamma)d_{u_2}$.
\end{proof}

\begin{rmk}  \label{rmk:repara}
To obtain a loop of symmetric matrices in \eqref{eqn:det1}, one needs to reparametrize the path of symplectic matrices as $$\Psi^i(\chi(\alpha_i(s)),1)\cdot\Phi_{\epsilon}^i(\chi(\alpha_i(s)),1),$$ where $\chi \colon [0,1] \rightarrow [0,1]$ is a non-decreasing such that $\chi'(0)=\chi'(1)=0$. Also, the block matrix $H^-_i(t)$ in $\Psi(t)$ is only piece-wise smooth. In order to obtain continuous loops $S_i^-$, we reparametrize $H^-_i(t)$ to be $H^-_i(\chi(t))$ with $\chi'(t)=0$ when $t=\frac{i}{2k}$, $i =1,\hdots,2k-1$. However the sign of the crossing form is independent of the parametrizations, so we have suppressed the reparametrization $\chi$ in the above proof.
\end{rmk}

It remains to compute $\Delta_{v_i}$ associated to each solution $v_i$ of the BV equation in the definition of $\Delta$. We recall that the solutions to the BV equation at $\theta_i$ for $i=1 \hdots k$ are given by 
\begin{eqnarray}
&& v_i\colon \R \times S^1 \rightarrow S^1 \times \R^{2n-1} \nonumber  \\
&& v_i(s,t)=\psi^{-t}_K(x_{a_i(s)})=(ka_i(s)+kt, 0). \nonumber \\
&& \displaystyle\lim_{s \rightarrow -\infty}v_i(s,t+\frac{i}{k})= \widecheck{\gamma}(t+\frac{i}{k}), \displaystyle\lim_{s \rightarrow +\infty}v_i(s,t)= \widehat{\gamma}(t).\nonumber
\end{eqnarray}
For fixed $\theta_i$ the linearization of the equation \eqref{eqn:bv} at $v_i$ can be written as 
\begin{eqnarray}
&& \ \ D_{v_i}\colon W^{1,p}(\R \times S^1, S^1 \times \R^{2n-1}) \rightarrow L^p(\R \times S^1, S^1 \times \R^{2n-1})   \nonumber \\
&& \ \ D_{v_i}(X)=\partial_s(X)+J_0\partial_t(X)+\widecheck{S}(t+a_i(s))\cdot X \nonumber \\
&& \ \ \displaystyle\lim_{s \rightarrow -\infty}\widecheck{S}(t+a_i(s))=\widecheck{S}(t+\frac{i}{k}), \  \displaystyle\lim_{s \rightarrow +\infty}\widecheck{S}(t+a_i(s))=\widehat{S}(t),\nonumber
\end{eqnarray}
where $J_0\widecheck{S}$ and $J_0\widehat{S}$ in $\mathfrak{sp}(2n)$ generates $\Psi_{\widecheck{\gamma}}(t)$ and $\Psi_{\widehat{\gamma}}(t)$ in $\mathrm{Sp}(2n)$ respectively.
As defined in section \ref{sec:bv}, each $v_i \in \mathscr{M}_{\Delta}(\widecheck{\gamma},\widehat{\gamma})$ induces a map $\Delta_{v_i}\colon o_{\widehat{\gamma}} \rightarrow o_{\widecheck{\gamma}}$ under
\begin{equation} \label{eqn:orbv}
o_{\widecheck{\gamma}} \otimes |T_{v_i} \mathscr{M}_{\Delta}(\widecheck{\gamma},\widehat{\gamma})| \cong \det(D_{v_i}) \otimes \det(T_{\theta_i}S^1)\otimes o_{\widehat{\gamma}}.
\end{equation}
If we fix the orientation of $T_{\theta_i}S^1$ to be $\R \langle \frac{\partial}{\partial \theta} \rangle$, then $\ker(D_{v_i})$ is spanned by
\begin{equation}
\epsilon(v_i)(f_i(s), 0) \text{ with } f_i(s):=\frac{\partial}{\partial s}a_i(s) <0,
\nonumber \end{equation} 
where $\epsilon(v_i)$ is equal to $\pm 1$ if the isomorphism $\Delta_{v_i}\colon o_{\widehat{\gamma}} \rightarrow o_{\widecheck{\gamma}}$ is given by multiplication by $\pm 1$. The value of $\epsilon(v_i)$ is determined by comparing the chosen orientations of two sides of \eqref{eqn:orbv}. The following Lemma \cite[Lemma 4.27]{BO1} is required to compute $\epsilon(v_i)$.
\begin{lem}\label{lem:T}
Given a loop of nondegenerate symmetric matrices $S_1(t)$, we define $S_k(t)=S_1(kt)$ and assume that $S_k(t)$ is also nondegenerate. Consider operators
\begin{equation}\label{eqn:twist}
T:=\partial_s + J_0 \partial_t +S_k(t+\frac{\beta(s)}{k}),
\end{equation}
with $\beta\colon \R \rightarrow [0,1]$ a smooth function such that 
$$\displaystyle\lim_{s\rightarrow -\infty}\beta(s)=1, \displaystyle\lim_{s\rightarrow +\infty}\beta(s)=0$$
and $|\beta'(s)|<C$ for some small enough constant $C$, there are actions on $\mathscr{O}:=\mathscr{O}_-(S_k)$ or $\mathscr{O}(S_k, S)$ defined by
\begin{equation}
\Psi_{\rho}^T(D):=T \#_{\rho} D, \ \ \rho \gg0.
\nonumber \end{equation}
The induced action of $\Psi_{\rho}^T$ on $\Det(\mathscr{O})$ is orientation reversing if and only if $k$ is even and the differences of Conley--Zehnder indices $\mu_{CZ}(\Psi_{S_k})-\mu_{CZ}(\Psi_{S_1})$ is odd.
\end{lem}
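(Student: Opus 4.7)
My plan is to reduce the lemma to a block-by-block spectral computation on $\det(T)$. First I would note that, by the $1$-periodicity of $S_1$, one has $S_k(t + 1/k) = S_1(kt+1) = S_k(t)$, so the two asymptotic ends of $T$ coincide; thus $T$ is Fredholm of index $0$, and only the orientation class it defines in $\det(T)$ is relevant. Moreover, the gluing formula \eqref{eqn:gluing} shows that the action of $\Psi^T_\rho$ on $\Det(\mathcal{O}(S_k, S))$ is determined by its action on $\det(T)$ by tensoring with $\det(D)$, so it suffices to treat the case $\mathcal{O} = \mathcal{O}_-(S_k)$.

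Next, I would deform $\Psi_{S_1}$ through non-degenerate loops in $\Sp(2n)$ to a direct sum of $2 \times 2$ blocks of elliptic, positive hyperbolic, and negative hyperbolic type, as in the normal form used in Lemma \ref{lem:d}. Since $T$ respects this decomposition, $\det(T)$ splits as a tensor product over blocks, and it suffices to compute the sign of $\Psi^T_\rho$ block-by-block. For elliptic and positive hyperbolic blocks the twisted path $S_k(t+\beta(s)/k)$ admits a homotopy through non-degenerate operators to the untwisted path with no spectral crossing, so these blocks contribute $+1$.

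The only nontrivial contribution comes from negative hyperbolic blocks, whose $k$-fold covers interact with the $\Z/2$ deck symmetry of the negative spectrum. The main obstacle, which I expect to be the technical heart of the proof, is a careful sign count in these blocks: using the Robbin--Salamon characterization of $\mu_{CZ}$ as a spectral flow together with the crossing-form machinery of \cite{RS}, I would track how the twist $\beta(s)/k$ shifts the crossings of $S_k(t+\beta(s)/k)$ relative to $S_k(t)$. Such a block contributes a sign $-1$ precisely when $k$ is even and the local contribution to $\mu_{CZ}(\Psi_{S_k}) - \mu_{CZ}(\Psi_{S_1})$ is odd. Summing over blocks and using that the total parity of $\mu_{CZ}(\Psi_{S_k}) - \mu_{CZ}(\Psi_{S_1})$ equals the parity of the number of such exceptional negative hyperbolic blocks, I would conclude that $\Psi^T_\rho$ is orientation reversing exactly in the stated case, matching the bad-orbit characterization of Remark \ref{rmk:badcz}.
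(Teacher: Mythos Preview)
The paper does not actually prove this lemma: the sentence introducing it reads ``The following Lemma \cite[Lemma~4.27]{BO3} is required to compute $\epsilon(v_i)$,'' and no argument is supplied afterwards. So there is no in-paper proof to compare against; your outline is in effect a proposed reconstruction of the Bourgeois--Oancea proof.

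Your overall architecture---reduce via the gluing isomorphism to the orientation class of $\det(T)$ alone, deform $S_1$ to a direct sum of elliptic, positive hyperbolic and negative hyperbolic $2\times2$ blocks, and argue that only the negative hyperbolic blocks can contribute a sign---is correct and is indeed how \cite{BO3} proceeds. The identification of the final sign with the bad-orbit criterion of Remark~\ref{rmk:badcz} is also right.

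Where your sketch needs repair is the mechanism you invoke for the negative hyperbolic block. You propose to compute the sign by ``tracking how the twist $\beta(s)/k$ shifts the crossings'' via the Robbin--Salamon spectral flow. But the one-parameter family of asymptotic operators $A_\theta = J_0\partial_t + S_k(t+\theta)$ is \emph{isospectral}: each $A_\theta$ is conjugate to $A_0$ by the shift $t\mapsto t+\theta$, so no eigenvalue ever crosses zero and the spectral flow, which equals $\operatorname{ind}(T)=0$, carries no information. The sign you want is not a spectral flow but the \emph{monodromy} of the determinant line bundle $\Det(\mathcal{O}_-(\,\cdot\,))$ around the loop $\theta\mapsto S_k(\cdot+\theta/k)$ in the space of non-degenerate asymptotic data---equivalently, the sign of the deck transformation $t\mapsto t+1/k$ acting on the orientation line $o_{\Psi_{S_k}}$. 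For a single negative hyperbolic block this deck action is multiplication by $-1$ exactly when $k$ is even, which is the computation carried out in \cite{BO3}; summing over blocks then gives the parity condition on $\mu_{CZ}(\Psi_{S_k})-\mu_{CZ}(\Psi_{S_1})$. Once you replace the spectral-flow step by this monodromy computation, your block-by-block outline goes through.
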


Let $T$ be the operator defined in \eqref{eqn:twist}. The $i$-th iterations of $T$ is defined to be
\begin{equation}
T^i=\underbrace{T \#_{\rho} \cdots \#_{\rho} T}_\text{i}.
\nonumber \end{equation}
Taking $S_k(t)=\widecheck{S}(t)$ in Lemma \ref{lem:T}, we have
\begin{equation}
T^{i}= \partial_s+J_0 \partial_t +\widecheck{S}(t + \frac{i}{k} \beta_{\rho}^i(s)).
\nonumber \end{equation}
where $\beta_{\rho}^i(s)\colon \R \rightarrow [0,1]$ is another smooth function depending on $\rho$ and $i$ such that 
\begin{equation}
\displaystyle\lim_{s\rightarrow -\infty}\beta_{\rho}^i(s)=1, \displaystyle\lim_{s\rightarrow +\infty}\beta_{\rho}^i(s)=0 \text{ and }|\partial_s\beta_{\rho}^i(s)|<C.
\nonumber \end{equation}
\begin{lem} \label{lem:delta}
For each $\gamma \in \mathscr{P}(H^{\tau})$ which corresponds to a $k$-fold Reeb orbit, we have $\Delta_{v_i}=(\epsilon(\gamma))^{i-1}\Delta_{v_1}$ where $\epsilon(\gamma)$ is equal $1$ if $\gamma$ is good, and $-1$ if $\gamma$ is bad.
\end{lem}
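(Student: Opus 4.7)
The plan is to identify the linearized operator $D_{v_i}$ with the iterated operator $T^i$ from Lemma \ref{lem:T}, and then apply Lemma \ref{lem:T} inductively to track sign changes in the orientation isomorphism as $i$ varies.

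First I would match asymptotic data. In the Weinstein trivialization used above, the symmetric paths $\widehat{S}(t)$ and $\widecheck{S}(t)$ generating the linearized return maps along $\widehat{\gamma}$ and $\widecheck{\gamma}$ differ only by a $t$-shift---both linearize the same $k$-fold Reeb orbit $\gamma$---and this shift can be absorbed into the cut-off $\beta_\rho^i(s)$ appearing in the definition of $T^i$. Taking $S_k(t)=\widecheck{S}(t)$ in Lemma \ref{lem:T}, so that $S_1$ is the path associated to the underlying simple Reeb orbit, the iterated operator
\[
T^i = \partial_s + J_0 \partial_t + \widecheck{S}\!\left(t + \tfrac{i}{k}\beta_\rho^i(s)\right)
\]
shares its non-degenerate asymptotic data with $D_{v_i}$. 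Since the space of Fredholm operators with such fixed asymptotics is contractible, this produces a canonical identification $\det(D_{v_i}) \cong \det(T^i)$.

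Next I would apply Lemma \ref{lem:T} iteratively. Decomposing $T^i = T \#_\rho T^{i-1}$ and using the gluing isomorphism \eqref{eqn:gluing}, one sees that the canonical orientation on $\det(T^i)$ differs from that on $\det(T^{i-1})$ by the sign produced by the twisting action $\Psi_\rho^T$ on $\Det(\mathcal{O})$. By Lemma \ref{lem:T}, this sign is $-1$ precisely when $k$ is even and $\mu_{CZ}(\Psi_{\widecheck{S}})-\mu_{CZ}(\Psi_{S_1})$ is odd, which by Remark \ref{rmk:badcz} is exactly the condition for $\gamma$ to be a bad Reeb orbit. Each additional twist thus contributes a factor of $\epsilon(\gamma)$, and the canonical gluing orientation on $\det(T^i)$ differs from that on $\det(T^1)\cong \det(D_{v_1})$ by $\epsilon(\gamma)^{i-1}$. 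Because the orientation of $T_{\theta_i}S^1$ is fixed uniformly by $\R\langle\partial/\partial\theta\rangle$ and the coherent orientations $o_{\widehat{\gamma}}$, $o_{\widecheck{\gamma}}$ do not depend on $i$, translating this determinant-line sign comparison through the defining isomorphism \eqref{eqn:orbv} yields $\Delta_{v_i} = \epsilon(\gamma)^{i-1}\Delta_{v_1}$.

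The hard part will be verifying that the distinguished kernel generator $\epsilon(v_i)(f_i(s),0)$ of $D_{v_i}$---with $f_i(s)<0$ for every $i$---matches, under the canonical isomorphism $\det(D_{v_i})\cong\det(T^i)$ and the iterated gluing, the class obtained by concatenating the kernel generators of $i$ copies of $T$. This is an orientation-bookkeeping exercise on the gluing neck that must reconcile the $\pm 1$ convention $\epsilon(v_i)$ defining $\Delta_{v_i}$ with the universal sign $\epsilon(\gamma)$ extracted from Lemma \ref{lem:T}.
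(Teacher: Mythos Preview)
Your approach is close in spirit to the paper's, but the first step contains a genuine error that undermines the rest. You claim that $\widehat{S}(t)$ and $\widecheck{S}(t)$ ``differ only by a $t$-shift'' because they both linearize the same $k$-fold Reeb orbit. This is false: $\widehat{\gamma}$ and $\widecheck{\gamma}$ are orbits of the \emph{perturbed} Hamiltonian $H^\tau_\epsilon = H^\tau + \epsilon h_\gamma$, and the perturbation contributes differently at the minimum and maximum of $h_0$. In particular $|\widehat{\gamma}| = |\widecheck{\gamma}|+1$, so $\widehat{S}$ and $\widecheck{S}$ have different Conley--Zehnder indices and cannot be related by a time shift. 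Consequently $T^i$, whose asymptotic at $+\infty$ is $\widecheck{S}(t)$, does \emph{not} share asymptotic data with $D_{v_i}$, whose $+\infty$ asymptotic is $\widehat{S}(t)$. Your contractibility argument for $\det(D_{v_i})\cong\det(T^i)$ therefore does not go through as written.

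The paper avoids this by gluing $T^{i-1}$ onto the stabilized operator $\overline{D_{v_1}}$ rather than working with $T^i$ alone. The glued operator $T^{i-1}\#_\rho\overline{D_{v_1}}$ then has the correct asymptotics at both ends to be compared with $\overline{D_{v_i}}$, and Lemma~\ref{lem:T} (applied $i-1$ times to the action $\Psi^T_\rho$ on $\mathcal{O}(S_k,S)$) supplies the sign $\epsilon(\gamma)^{i-1}$. The paper then constructs an explicit one-parameter family $\overline{D^r_i}$ of surjective operators interpolating between $\overline{D_{v_i}}$ and $T^{i-1}\#_\rho\overline{D_{v_1}}$, by linearly homotoping the shift functions $a_i(s)$ and $\tfrac{i}{k}\beta^i_\rho(s)\#_\rho a_1(s)$. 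Along this family the one-dimensional kernel is spanned by $(f^i(r,s),0)$, and by linearity the sign of $f^i(r,s)$ is constant in $r$; comparing the two endpoints (where $f_i(s)<0$ and $f_i^\#(s)<0$ respectively) yields $\epsilon(v_i)=\epsilon(\gamma)^{i-1}\epsilon(v_1)$. This explicit path-and-kernel argument is exactly the ``hard part'' you flag in your last paragraph but do not carry out; note that it requires working with the surjective stabilizations $\overline{D_{v_i}}$ rather than the determinant lines of $D_{v_i}$ abstractly.
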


\begin{proof}
For each $(\theta_i, v_i) \in \mathscr{M}_{\Delta}(\widehat{\gamma}, \widecheck{\gamma})$, the stabilization of $D_{v_i}$
\begin{eqnarray}
&& \overline{D}_{v_i}\colon T_{\theta_i}S^1 \oplus W^{1,p}(\R \times S^1, S^1 \times \R^{2n-1}) \rightarrow L^p(\R \times S^1, S^1 \times \R^{2n-1}), \nonumber \\
&& \overline{D}_{v_i}(v, X)=v+D_{v_i}(X)\nonumber
\end{eqnarray}
is a surjective operator. For $\rho \gg 0 $, the glued operator $D_i:=T^{i-1}\#_{\rho} \overline{D}_{v_1}$ is also surjective with a uniformly bounded inverse $Q_i$. It is shown in \cite[Corollary 6]{BM} or \cite[Proposition 9]{FH} that restriction of the projection
\begin{equation}
(\mathbb{I}-Q_i\circ D_i)|_{\ker(\overline{ D}_{v_1})}\colon \ker(\overline{ D}_{v_1}) \rightarrow \ker(T^{i-1}\#_{\rho} \overline{ D}_{v_1})
\nonumber \end{equation}
is an isomorphism. Lemma \ref{lem:T} now implies that this isomorphism is explicitly given by
\begin{equation}
\epsilon(v_1)(f_1(s),0) \mapsto \epsilon(v_1)(\epsilon(\gamma))^{i-1}(f_i^{\#}(s),0),
\nonumber \end{equation}
where $f_i^{\#}(s)$ is the extension by zero of $f_1(s)$ in $\ker(\overline{ D})_{v_1}$ as the operator $T^{i-1}$ has trivial kernel. Since the norm $||f_i^{\#}-f_1||_{W^{1,p}} \rightarrow 0$ as $\rho $ approaches $\infty$, we have $f_i^{\#}(s) < 0$ as well.
Then it suffices to show that for each $i$ there is a continuous path of surjective operators connecting $\overline{D}_{v_i}$ and $T^{i-1} \#_{\rho} \overline{D}_{v_1}$ in $\mathscr{O}(\widecheck{S}(t+\frac{i}{k}),\widehat{S}(t))$. We consider the path of stabilized operators
\begin{eqnarray}
&& \overline{D}^r_i\colon \R \oplus W^{1,p}(\R \times S^1,  S^1 \times \R^{2n-1}) \rightarrow L^p(\R \times S^1, S^1 \times \R^{2n-1}),  \nonumber \\
&& \overline{D}^r_i(v,X)=v+\partial_s(X) +J_0\partial_t(X)+\widecheck{S}(t+h^r_i(s))(X),\nonumber
\end{eqnarray}
where $h^r_i(s)\colon \R \rightarrow S^1$ is a linear homotopy between $a_i(s)$ and $\frac{i}{k}\beta_{\rho}^i(s) \#_{\rho} a_1(s)$ satisfying 
\begin{equation}
\displaystyle\lim_{s \rightarrow -\infty}h^r_i(s)=\frac{i}{k}, \displaystyle\lim_{s \rightarrow +\infty}h^r_i(s)=\frac{c_0}{k} \ \text{ for } i=1,\hdots k \text{ and }r \in [0,1].
\nonumber \end{equation} 
Let $\ker(\overline{D}^r_i)$ be spanned by $(f^i(r, s),0)$ with 
\begin{equation}
f^i(0,s)=\epsilon(v_i) \cdot f_i(s) \text{ and } f^i(1,s)=(\epsilon(\gamma))^{i-1}\epsilon(v_1) \cdot f_i^{\#}(s).
\nonumber \end{equation}
By linearity of $\overline{D}^r_i$, the sign of $f^i(r, s)$ is constant sign for all $r \in [0,1]$ and for each $i$, that is,
\begin{equation}
\sign(\epsilon(v_i)f_i(s))=\sign(\epsilon(v_1)(\epsilon(\gamma))^{i-1} f_i^{\#}(s)).
\nonumber \end{equation}
Together with the fact that $f_i(s)<0$ and $f_i^{\#}(s)<0$, we conclude that 
\begin{equation}
\epsilon(v_i)=(\epsilon(\gamma))^{i-1}\epsilon(v_1),   \text{ or, }\Delta_{v_i}=(\epsilon(\gamma))^{i-1}\Delta_{v_1} \text{ for }i=1,\cdots, k
\nonumber \end{equation}
 as claimed.
\end{proof}

Having computed the equivariant differential $\delta^{S^1}$ between $\widehat{\gamma}$ and $ \widecheck{\gamma}$, we present the proof of Theorem \ref{thm:local} as follows.

\begin{proof}[Proof of Theorem \ref{thm:local}]
Given $0=l_0 < l_1 < l_2 < \cdots$ with $l_i \in \mathscr{S}$ for $i \geq 1$, we set $\tau_i=\frac{l_i+l_{i+1}}{2}$ and define $\hpl(M) \cong \varinjlim \hpl(M, H^{\tau_i}_t)$ as in \eqref{eqn:psh}. By Lemma \ref{lem:preact}, for fixed $\tau_i$ there is a filtration induced by \eqref{defn:af} on $CF^*(M, H^{\tau_i}_t)((u))$ given by
\begin{equation}
F^p CF^*(M, H^{\tau_i}_t)((u))=\Big(\bigoplus_{\substack {\mathscr{A}_{H^{\tau_i}_t}(\gamma) \geq a_{\tau_p} \\ \gamma \in \mathscr{P}(H^{\tau_i}_t)}} \Z\langle o_{\gamma} \rangle \Big)((u)),
\nonumber \end{equation}
and the associated graded complexes are defined by
\begin{eqnarray}
G^p CF^*(M, H^{\tau_i}_t)&:=&F^p CP^*(M, H^{\tau_i}_t)/F^{p-1} CP^*(M, H^{\tau_i}_t)\nonumber\\
&=&\Big(\bigoplus_{\substack { a_{\tau_{p-1}}\leq {\mathscr{A}_{H^{\tau_i}_t}(\gamma) \leq a_{\tau_p}}  \\ \gamma \in \mathscr{P}(H^{\tau_i}_t)}} \Z\langle o_{\gamma}\rangle \Big)((u)).\nonumber
\end{eqnarray}
The $E_0$-page of the spectral sequence associated to this filtration is given by
\begin{equation} 
E_0^{p,q}=G^p CP^{p+q}(M, H^{\tau_i}_t).
\nonumber \end{equation}
Since we have assumed that all $1$-periodic orbits $\gamma$ have distinct action $\mathscr{A}_{H^{\tau_i}_t}(\gamma)$ by condition \eqref{eqn:cond}, the associated graded complexes are precisely
\begin{equation}
G^p CF^*(M, H^{\tau_i}_t)((u)) \cong\Z \langle o_{ \widehat{\gamma}}, o_{\widecheck{\gamma}} \rangle \otimes_{\Z} \Z((u)).
\nonumber \end{equation}
It suffices to compute $\delta_i$ on $ o_{\widehat{\gamma}}$ and $o_{\widecheck{\gamma}}$ to obtain the differential
\begin{equation}
d_0\colon E_0^{p,q}=G^0 CP^{p+q}(M, H^{\tau_i}_t) \rightarrow  E_0^{p, q+1}=G^0 CP^{p+q+1}(M, H^{\tau_i}_t).
\nonumber \end{equation} 
Proposition \ref{prop:1} implies that for all $p \neq 0$ and $|\widehat{\gamma}|=|\widecheck{\gamma}|+1=q$, we have
\begin{equation} \label{eqn:good}
\cdots \xrightarrow{\pm k} E_0^{p,q-1}\cong\Z \langle \widecheck{\gamma} \rangle \xrightarrow{0} E_0^{p,q}\cong \Z \langle \widehat{\gamma} \rangle \xrightarrow{\pm k} E_0^{p,q+1}\cong \Z \langle u\widecheck{\gamma} \rangle \xrightarrow{0} \cdots,
\end{equation}
if $\gamma$ corresponds to a good Reeb orbit on $\partial M$, or 
\begin{equation} \label{eqn:bad}
\cdots \xrightarrow{0} E_0^{p,q-1}\cong \Z \langle \widecheck{\gamma} \rangle  \xrightarrow{\pm 2} E_0^{p,q}\cong \Z \langle \widehat{\gamma} \rangle \xrightarrow{0} E_0^{p,q+1} \cong \Z \langle u\widecheck{\gamma} \rangle \xrightarrow{\pm 2} \cdots,
\end{equation}
if $\gamma$ corresponds to a bad Reeb orbit. When $p =0$, as all $1$-periodic orbits $\gamma$ with $\mathscr{A}_{H^{\tau_i}_t}(\gamma)=0$ are critical points of the Morse function $H^{\tau_i}_t|_{\Int(M)}$, there is no regular solutions to \eqref{eqn:delta} for $i \geq 1$. So $\delta_i=0$ on $E_0^{0,q}$ for $i \geq 1$ and all $q$. This implies that $d_0\colon E_0^{0,q} \rightarrow  E_0^{0, q+1}$ agrees with the Floer differential $\delta_0=d$. Also, it is shown in \cite[Theorem 7.3]{SZ} that for $C^2$-small Morse function $H^{\tau_i}_t|_{\Int(M)}$ the differential $\delta_0$ agrees with the Morse differential $d_M$. Together with \eqref{eqn:good} and \eqref{eqn:bad}, one obtains that the $E_1$-page of the spectral sequence associated to the corresponding filtration on $CP^*(M, H^{\tau_i}_t)\otimes_{\Z}\Q$ is given by
\begin{eqnarray}
&& E^{p.q}_1=0, \ \ p \geq 1,\nonumber \\
&& E^{0.q}_1=H^q(M, \Q)((u)).\nonumber
\end{eqnarray}
We conclude that this spectral sequence degenerates at $E_1$-page and converges to $H^*(M, \Q)((u))$ for each $\tau_i$. Now for $\tau_i  < \tau_j$, there is a natural inclusion of filtered cochain complexes
\begin{equation}
\iota_{i,j}\colon CF^*(M, H^{\tau_i}_t)((u))\otimes_{\Z}\Q \hookrightarrow CF^*(M, H^{\tau_j}_t)((u))\otimes_{\Z}\Q,
\nonumber \end{equation}
which induces identity maps between $(E_0^{p,q})^{\tau_i}$ and $(E_0^{p,q})^{\tau_j}$ if $p \leq i$ and zero maps otherwise. As $\tau_i \rightarrow \infty$,
\begin{equation}
\{(E_r^{p,q})^{\tau_i}, (d_r)^{\tau_i} \}_{i\geq 0}
\nonumber \end{equation}
forms a direct limit of spectral sequences with respect to $\iota_{i,j}$. We have shown that $((E_0^{p,q})^{\tau_i}, (d_0)^{\tau_i})$ converges to $H^*(M, \Q)((u))$ for all $\tau_j$, it follows that the direct limit is still $H^*(M, \Q)((u))$. This finishes proof.
\end{proof}

\section{Periodic Symplectic Cohomology $\hp(M)$} \label{sec:cpsh}
We define periodic symplectic cohomology of a Liouville domain $\hp(M)$ in this section. Given $\tau_i \rightarrow \infty$ with $\tau_i \notin \mathscr{S}$, we choose a cofinal system of admissible Hamiltonians $\mathbf{H}=\{H^{\tau_i}_t \}_{i \geq 0}$ of slope $\tau_i$. One considers a formal variable $q$ of degree $-1$ satisfying $q^2=0$ and set
\begin{equation}
CF^*(M, H^{\tau_i}_t)[q]:= CF^*(M, H^{\tau_i}_t)\oplus qCF^*(M, H^{\tau_i}_t) \text{ for all } i.
\nonumber \end{equation}
Elements of $CF^*(M, H^{\tau_i}_t)[q]$ are of the form $a+bq$ for $a\in CF^*(M, H^{\tau_i}_t)$ and $b \in CF^*(M, H^{\tau_i}_t)$. The telescope construction of $\{CF^*(M, H^{\tau_i}_t)\}_{i\geq 1}$ introduced in \cite{AS} is given by the following diagram with appropriate Koszul signs omitted.
\[
\xymatrix@!=1.8pc{
CF^*(M, H^{\tau_1}_t) \ar@(ul,ur)^{d} && CF^*(M, H^{\tau_2}_t)  \ar@(ul,ur)^{d} && CF^*(M, H^{\tau_3}_t)  \ar@(ul,ur)^{d} && \cdots\\
qCF^*(M, H^{\tau_1}_t) \ar[u]^{id} \ar[urr]^{\kappa_0} \ar@(dl,dr)_{d} && qCF^*(M, H^{\tau_2}_t)  \ar[u]^{id} \ar[urr]^{\kappa_0} \ar@(dl,dr)_{d} && qCF^*(M, H^{\tau_3}_t) \ar[u]^{id} \ar[urr]^{\kappa_0} \ar@(dl,dr)_{d} && \cdots
}
\]
where $\kappa_0$ is the continuation map associated to the monotone homotopy between $H^{\tau_i}_t$ and $H^{\tau_{i+1}}_t$.  
Explicitly, this is given by the infinite direct sum
\begin{equation}
\widehat{CF^*}(M,\mathbf{H}):=\bigoplus_{i=1}^{\infty} CF^*(M, H^{\tau_i}_t)[q]
\nonumber \end{equation}
with the total differential
\begin{equation}
\widehat{\delta}_0(a+ qb) = (-1)^{deg(a)}d(a) + (-1)^{deg(b)}(qd(b) + \kappa_0(b) - b)
\nonumber \end{equation}
for $a+qb \in \widehat{CF^*}(M,\mathbf{H})$. Similarly given $\delta_j$ with $j\geq 1$ in the definition of $\delta^{S^1}$, we can extend it to an operation $\widehat{\delta}_i$ on $\widehat{CF^*}(M,\mathbf{H})$ by setting
\begin{equation}
\widehat{\delta}_j(a+ qb) = (-1)^{deg(a)}\delta_j(a) + (-1)^{deg(b)}(q\delta_j(b) + \kappa_j(b)),
\nonumber \end{equation}
where $\kappa_j\colon CF^*(M, H^{\tau_i}_t) \rightarrow CF^{*-2j}(M, H^{\tau_{i+1}}_t)$ are defined previously in \eqref{defn:k_i}.
Using the facts
\begin{equation}
\sum_{i+j=k}\delta_i\delta_j =0\ \ \text{ and } \sum_{i+j=k} \kappa_i  \delta_j - \delta_j \kappa_i=0 \ \text{ for all } k \geq 0,
\nonumber \end{equation}
it can be shown that $\sum_{i+j=k} \widehat{\delta}_i \widehat{\delta}_j=0$.
This implies that the cochain complex $(\widehat{CF^*}(M,\mathbf{H}),\{ \widehat{\delta}_j \}_{j \geq 0} )$ is an $S^1$-complex. The periodic symplectic cohomology of the Liouville domain $M$ is then defined to be the homology of the cochain complex
\begin{equation}
\big(\widehat{CF^*}(M,\mathbf{H})((u)), \widehat{\delta}^{S^1}=\widehat{\delta}_0+u\widehat{\delta}_1+u^2\widehat{\delta}_2+\cdots \big).
\nonumber \end{equation} 
We denote the resulting cohomology groups by $\hp(M, \mathbf{H})$. In fact, periodic symplectic cohomology is independent of the choice of Hamiltonians $\mathbf{H}=\{ H^{\tau_i}_t\}_{i \geq 0}$, which is shown as follows.

\begin{prop}\label{prop:indepham}
Periodic symplectic cohomology $\hp(M, \mathbf{H})$ is independent of the choice of the sequence of admissible Hamiltonians $\mathbf{H}=\{H^{\tau_i}_t\}_{i \geq 0}$ used in the definition.
\end{prop}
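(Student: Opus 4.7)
The plan is to reduce the claim to an application of Proposition \ref{prop:cycinv}. Given two cofinal sequences $\mathbf{H} = \{H^{\tau_i}_t\}_{i \ge 0}$ and $\mathbf{K} = \{K^{\sigma_i}_t\}_{i \ge 0}$, I will construct a third cofinal sequence $\mathbf{L} = \{L^{\rho_i}_t\}_{i \ge 0}$ dominating both, together with $S^1$-equivariant chain maps of telescopes
$$\Phi\colon \widehat{CF^*}(M, \mathbf{H}) \to \widehat{CF^*}(M, \mathbf{L}), \qquad \Psi\colon \widehat{CF^*}(M, \mathbf{K}) \to \widehat{CF^*}(M, \mathbf{L}),$$
whose zeroth components $\Phi_0$ and $\Psi_0$ are quasi-isomorphisms for the total telescope differential $\widehat{\delta_0}$. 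Proposition \ref{prop:cycinv} then supplies isomorphisms $\widehat{PSH^*}(M, \mathbf{H}) \cong \widehat{PSH^*}(M, \mathbf{L}) \cong \widehat{PSH^*}(M, \mathbf{K})$, giving the desired independence.

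To produce $\mathbf{L}$, I choose slopes $\rho_i \notin \mathcal{S}$ satisfying $\rho_i \ge \max(\tau_i, \sigma_i)$ and $\rho_i \to \infty$, and pick admissible $L^{\rho_i}_t$ of slope $\rho_i$ so that $H^{\tau_i}_t \preceq L^{\rho_i}_t$ and $K^{\sigma_i}_t \preceq L^{\rho_i}_t$ for every $i$. The construction of \eqref{defn:kappa} then yields an $S^1$-equivariant continuation chain map $\mu_i = (\mu_{i,0}, \mu_{i,1}, \ldots)$ from $CF^*(M, H^{\tau_i}_t)$ to $CF^*(M, L^{\rho_i}_t)$. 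These $\mu_i$ do not strictly commute with the $S^1$-equivariant continuation maps going up the ladders in $\mathbf{H}$ and $\mathbf{L}$, but standard parametrized Floer arguments over a square produce $S^1$-equivariant chain homotopies $h_i = (h_{i,0}, h_{i,1}, \ldots)$ relating the two compositions. Assembling the $\mu_i$ on the original summands with the $h_i$ on the $q$-summands defines the components $\Phi_j\colon \widehat{CF^*}(M, \mathbf{H}) \to \widehat{CF}^{*-2j}(M, \mathbf{L})$, and the $S^1$-chain map relation \eqref{eqn:s1-map} for $\{\Phi_j\}$ reduces to the coherence equations satisfied by the $\mu_i$ and $h_i$ by construction.

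To verify that $\Phi_0$ is a quasi-isomorphism, I identify the $\widehat{\delta_0}$-homology of each telescope with symplectic cohomology. The standard computation of mapping-telescope homology gives
$$H^*(\widehat{CF^*}(M, \mathbf{H}), \widehat{\delta_0}) \;\cong\; \varinjlim_i HF^*(M, H^{\tau_i}_t) \;\cong\; SH^*(M),$$
via projection to the $q = 0$ summands, and similarly for $\mathbf{L}$. Under these identifications $\Phi_0$ induces the map of direct limits coming from $\mu_{i,0}\colon HF^*(M, H^{\tau_i}_t) \to HF^*(M, L^{\rho_i}_t)$; since both directed systems are cofinal in the full preordered set of admissible Hamiltonians, the resulting map is the identity on $SH^*(M)$, in particular a quasi-isomorphism. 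Proposition \ref{prop:cycinv} then yields $\widehat{PSH^*}(M, \mathbf{H}) \cong \widehat{PSH^*}(M, \mathbf{L})$, and the same argument with $\mathbf{K}$ in place of $\mathbf{H}$ produces the remaining isomorphism.

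The main obstacle is the construction of the $S^1$-equivariant telescope comparison map $\Phi$: coherently producing the full family of higher homotopies $h_{i,j}$ that witness ladder compatibility to all orders of the $S^1$-structure, and verifying that the telescopic assembly yields a genuine $S^1$-equivariant chain map, demands a careful parametrized moduli setup together with the usual compactness and gluing arguments extended to the $S^1$-equivariant framework of \cite{BO2}. Once this comparison map is in place, independence of $\mathbf{H}$ is a purely formal consequence of Proposition \ref{prop:cycinv}.
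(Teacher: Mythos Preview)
Your proof is correct and follows the same overall strategy as the paper: construct an $S^1$-equivariant chain map between telescopes whose zeroth component is a quasi-isomorphism (both sides computing $SH^*(M)$), then invoke Proposition~\ref{prop:cycinv}. The essential analytic input you flag as the ``main obstacle''---the $S^1$-equivariant chain homotopy between two composites of continuation maps---is precisely the content of Lemma~\ref{lem:homo} in the paper.

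The one genuine difference is architectural. The paper builds a comparison map \emph{directly} from $\widehat{CF^*}(M,\mathbf{H})$ to $\widehat{CF^*}(M,\mathbf{K})$ by interleaving the two sequences according to slope: a single $H^{\tau_i}_t$ may have several successors $K^{\sigma_{j+1}}_t,\dots,K^{\sigma_{j+m_j}}_t$ in $\mathbf{K}$, and the map on the $q$-summand is a sum over these successors together with the corresponding homotopies, with a case distinction on whether $n_i\geq 1$ or $n_i=0$. Your approach instead passes through a third sequence $\mathbf{L}$ chosen to dominate both $\mathbf{H}$ and $\mathbf{K}$ levelwise, so that the telescope map is level-preserving and each square involves a single homotopy $h_i$. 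This buys you a cleaner combinatorial structure at the cost of a zigzag $\mathbf{H}\to\mathbf{L}\leftarrow\mathbf{K}$ rather than a direct map; the paper's construction is more intricate to write down but yields a direct comparison. Both routes rest on the same Floer-theoretic ingredients and the same homological algebra.
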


\begin{proof}
Given another choice of the cofinal system of admissible Hamiltonians $\mathbf{K}=\{K^{\sigma_j}_t\}_{j \geq 0}$, we need to construct a map $f=(f_0,f_1,\hdots)$ of $S^1$-complexes between $\widehat{CF^*} (M, \mathbf{H})$ and $\widehat{CF^*}(M, \mathbf{K})$. Let $\mathbf{H} \sqcup \mathbf{K}$ be the union of the cofinal systems of Hamiltonians. There is a pre-order on $\mathbf{H} \sqcup \mathbf{K}$ by slopes of the Hamiltonians. We call $K^{\sigma_{j+m}}_t$ is the \textit{$m$-th successor} of $H^{\tau_{i}} _t$ in $\mathbf{K}$ if $H^{\tau_{i}}_t$ and $K^{\sigma_{j+m}}_t$ in $\mathbf{H} \sqcup \mathbf{K}$ satisfy
\begin{eqnarray}
&& H^{\tau_i}_t \preceq H^{\tau_{i+1}}_t  \preceq \hdots \preceq H^{\tau_{i+{n_i}}}_t \preceq K^{\sigma_{j+1}}_t  \preceq \hdots\label{eqn:order}\\
&& \preceq K^{\sigma_{j+m}}_t \preceq \hdots \preceq K^{\sigma_{j+{m_j}}}_t \preceq H^{\tau_{i+n_i+1}}_t\nonumber
\end{eqnarray}
for some non-negative integers $n_i$ and $m_j$.
We will define the map $f$ of $S^1$-complexes on each summand of $\widehat{CF^*}(M, \mathbf{H})$ as follow.
For $a_i \in CF^*(M, H^{\tau_i}_t)$, the map $f_p$ is given by 
\begin{equation}
f_p(a_i)=\eta_p^1(a_i)+\eta_p^2(a_i)+\hdots+\eta_p^{m_j}(a_i),
\nonumber \end{equation}
where $\eta_p^m \colon CF^*(M, H^{\tau_i}_t) \rightarrow CF^{*-2p}(M, K^{\sigma_{j+m}}_t)$ is the $p$-th order continuation map associated to the monotone homotopy between $H^{\tau_i}_t$ and $K^{\sigma_{j+m}}_t$ for $m=1, \hdots, m_j$. To determine the value of $f_p$ on the element $qb_i$ of $qCF^*(M, H^{\tau_{i}}_t)$ with $n_i \geq 1$ and $m<m_j$ in \eqref{eqn:order}, we consider the following diagram for each successor $K_t^{\sigma_{j+m}}$ of $H_t^{\tau_{i}}$ in $\mathbf{K}$
\[
\xymatrix@!=0.75pc{
\cdots && CF^*(M, H^{\tau_{i}}_t) \ar@(ul,ur)^{\delta^{S^1}} \ar[dd]_{\eta^m} && \ \ && CF^*(M, H^{\tau_{i+1}}_t) \ar[dd]_{\eta^{m+1}} \ar@(ul,ur)^{\delta^{S^1}} && \cdots\\
\cdots  && \ \ && qCF^*(M, H^{\tau_{i}}_t) \ar[ull]^{id} \ar[urr]^{\kappa} \ar@(ul,ur)^{\delta^{S^1}} \ar[drr]^{h^m} \ar[dd]_{\eta^{m}} &&  \ \ &&  \cdots \\
\cdots && CF^*(M, K^{\sigma_{j+m}}_t) \ar@(dl,dr)_{\delta^{S^1}} && \ \ && CF^*(M, K^{\sigma_{j+m+1}}_t)  \ar@(dl,dr)_{\delta^{S^1}} && \cdots\\
\cdots  && \ \ && qCF^*(M, K^{\sigma_{j+m}}_t) \ar[ull]^{id} \ar[urr]^{\kappa'} \ar@(dl,dr)_{\delta^{S^1}} &&  \ \ &&  \cdots
}
\]
where $h^m:=(h_0^m,h_1^m, \hdots)$ is the $S^1$-equivariant homotopy operator, provided by Lemma \ref{lem:homo} below, between the maps $\eta^{m+1}\circ\kappa$ and $\kappa' \circ \eta^m$ of $S^1$-complexes $qCF^*(M, H^{\tau_{i}}_t)$ and $CF^*(M, K^{\sigma_{j+m+1}}_t)$. Depending on the ordering of $H^{\tau_i}_t$ and $K^{\sigma_{j+m}}_t$ in \eqref{eqn:order}, we define that
\begin{equation}
f_p(qb_i)=
\begin{cases}
\sum_{s=1}^{m_j}  q\eta_p^s(b_i) + \sum_{s=1}^{m_j-1} h_p^s(b_i) & \text{if } n_i\geq 1;\\
\sum_{s=1}^{m_j} q\eta_p^s(b_i)  + h_p^{m_j}(b_i) & \text{if } n_i=0 .
\end{cases}
\nonumber \end{equation}
The fact that $h^m$ is a $S^1$-equivariant homotopy operator between the maps $\eta^{m+1}\circ\kappa$ and $\kappa' \circ \eta^m$ of $S^1$-complexes $qCF^*(M, H^{\tau_{i}}_t)$ and $CF^*(M, K^{\sigma_{j+m+1}}_t)$ for all $m \leq m_j$ implies that 
\begin{equation}
\sum_{k+l=j} (f_k\widehat{\delta}_l+ \widehat{\delta}_l f_k)(qb_i)=0 \text{ for all } i, j.
\nonumber \end{equation}
The same relations hold for all $a_i \in CF^*(M, H^{\tau_i}_t)$. So $f=(f_0,f_1, \hdots)$ defines a map of $S^1$-complexes $\widehat{CF^*}(M,\mathbf{H})$ and $\widehat{CF^*}(M,\mathbf{K})$. \\
\indent There is a natural filtration on $\widehat{CF^*}(M,\mathbf{H})$ defined by
\begin{equation}
F^k \widehat{CF^*}(M, \mathbf{H})=\bigoplus_{i=1}^{k-1}CF^*(M,H^{\tau_i}_t)[q]\oplus CF^*(M,H^{\tau_k}_t).
\nonumber \end{equation}
The natural inclusion 
\begin{equation}
\iota\colon CF^*(M,H^{\tau_k}_t) \rightarrow F^k\widehat{CF^*}(M,\mathbf{H})
\nonumber \end{equation}
induces a quasi-isomorphism. For $k \leq l$, there is a commutative diagram
\[
\xymatrixcolsep{3pc}\xymatrix{
CF^*(M,H^{\tau_k}_t) \ar@{^{(}->}[r]^-{\iota}
 \ar[d]^-{\kappa} & F^k \widehat{CF^*}(M,\mathbf{H}) \ar@{^{(}->}[d]^-{\iota} \\
CF^*(M,K^{\sigma_l}_t) \ar@{^{(}->}[r]^-{\iota} & F^l \widehat{CF^*}(M,\mathbf{K}) 
}
\]
up to a chain homotopy. This implies that
\begin{equation}
H_*(\widehat{CF^*}(M,\mathbf{H}), \widehat{\delta}_0) \cong \varinjlim  HF^*(M,H^{\tau_i}_t) \cong SH^*(M).
\nonumber \end{equation}
\begin{equation}
H_*(\widehat{CF^*}(M,\mathbf{K}), \widehat{\delta}_0) \cong \varinjlim HF^*(M,K^{\sigma_j}_t) \cong SH^*(M).
\nonumber \end{equation} 
As the restriction $(f_0|_{F^i})_*$ of the induced map $(f_0)_*$ in homology to $F^i \widehat{CF^*}(M,\mathbf{H})$ satisfies that
\begin{equation}
(f_0|_{F^i})_*\colon HF^*(M, H^{\tau_i}_t) \rightarrow HF^*(M, K^{\sigma_{j+m_j}}_t)
\nonumber \end{equation}
and $(f_0|_{F^i})_*$ is compatible with the direct system for all $i\geq 0$, the $S^1$-equivariant cochain map $f$ yields a quasi-isomorphism
\begin{equation}
(f_0)_*: \widehat{CF^*}(M, \mathbf{H})\rightarrow \widehat{CF^*}(M,\mathbf{K}).
\nonumber \end{equation} 
We can then conclude that there is an isomorphism 
$$\hp(M, \mathbf{H}) \cong \hp(M, \mathbf{K})$$ 
by appealing to Proposition \ref{prop:cycinv}.
\end{proof}

\begin{lem}\label{lem:homo}
Given admissible Hamiltonians $H_t$ and $K_t$ with $H_t \preceq K_t$, and continuation maps $\kappa=(\kappa_0, \kappa_1, \hdots)$ and $\kappa'=(\kappa_0', \kappa_1', \hdots)$ associated to two different monotone homotopies $(H_{s,t},J_{s,t})$ and $(H_{s,t}',J_{s,t}')$, there exists an $S^1$-equivariant homotopy operator $h=(h_0, h_1, \cdots)$ with 
$$h_i\colon CF^*(M,H_t) \rightarrow CF^{*-2i-1}(M, K_t)$$ 
such that the relation
\begin{equation} \label{eqn:hrel}
\kappa_n- \kappa'_n=\sum_{i+j=n} h_i \delta_j + \delta_j h_i
\end{equation}
is satisfied for all $n \geq 0$.
\end{lem}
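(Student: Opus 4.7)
The plan is to construct the chain homotopy $h$ by introducing an auxiliary parameter interpolating between the two monotone homotopies and counting the resulting parametric moduli spaces, in direct analogy with the construction of $\kappa$ in Section \ref{sec:psh}. First I would choose a smooth family $(H^r_{s,t}, J^r_{s,t})$, $r \in [0,1]$, of monotone homotopies from $(H_t, J_t^+)$ to $(K_t, J_t^-)$ with $(H^0_{s,t}, J^0_{s,t}) = (H_{s,t}, J_{s,t})$ and $(H^1_{s,t}, J^1_{s,t}) = (H'_{s,t}, J'_{s,t})$, and lift each member of the family to an $S^1$-invariant Floer datum $(H^r_{N,s,t}, J^r_{N,s,t})$ on $\widehat{M} \times S^{2N+1}$ via the same extension procedure \eqref{eqn:H_N}--\eqref{eqn:ext} used for $\kappa$.

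Next, for $x_- = (\gamma_-, Z_0)$ and $x_+ = (\gamma_+, Z_j)$ with $\Ind_M(Z_0) = 0$ and $\Ind_M(Z_j) = 2j$, I would define $\widetilde{\mathcal{M}}^h_j(\gamma_-, \gamma_+)$ to be the space of triples $(r, u, z)$ with $r \in [0,1]$ satisfying the parametric version of \eqref{eqn:k_i} with Floer data $(H^r_{N,s,t}, J^r_{N,s,t})$ and the same $S^1$-orbit asymptotics as in the definition of $\kappa$. For a generic choice of the $r$-family, the quotient $\mathcal{M}^h_j(\gamma_-, \gamma_+) := \widetilde{\mathcal{M}}^h_j(\gamma_-, \gamma_+)/S^1$ is a smooth manifold of dimension $|\gamma_-| - |\gamma_+| + 2j + 1$, one more than the corresponding continuation moduli space because of the additional parameter $r$. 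When $|\gamma_-| = |\gamma_+| - 2j - 1$ this space is zero-dimensional; relatively orientating each element as in Section \ref{subsec:orient} produces isomorphisms $h_{j,u}: o_{\gamma_+} \to o_{\gamma_-}$ whose signed sum, in the limit $N \to \infty$, defines an operation $h_j: CF^*(M, H_t) \to CF^{*-2j-1}(M, K_t)$.

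The relation \eqref{eqn:hrel} should then be read off from the compactification $\overline{\mathcal{M}^h_j}(\gamma_-, \gamma_+)$ in the case $|\gamma_-| = |\gamma_+| - 2j$, when the parametric moduli space is one-dimensional. Its boundary decomposes into four strata: the slice $r = 0$, contributing $\kappa'_j$; the slice $r = 1$, contributing $\kappa_j$; Floer breakings at the negative asymptote, giving rise to $\sum_{i+l=j} h_i \delta_l$; and breakings at the positive asymptote, giving $\sum_{i+l=j} \delta_l h_i$. The maximum principle from \cite[Lemma 1.4]{Oan} applies uniformly in $r \in [0,1]$ so that Gromov compactness still holds, and since the signed count of the boundary of a compact oriented one-manifold vanishes, \eqref{eqn:hrel} follows.

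The principal obstacle is the sign bookkeeping. One must verify that the fibers over $r = 0$ and $r = 1$ contribute to $\partial \overline{\mathcal{M}^h_j}$ with opposite signs, producing $\kappa_j - \kappa'_j$ rather than $\kappa_j + \kappa'_j$, and that the two families of Floer breakings assemble into $h_i \delta_l + \delta_l h_i$ under the coherent orientation conventions of \cite{FH}. This is done by incorporating the canonical orientation of the extra $[0,1]$-factor into the determinant line isomorphism analogous to \eqref{eqn:bvorient} and then invoking the gluing isomorphism \eqref{eqn:gluing}, in direct parallel to the BV-operator orientation analysis of Section \ref{sec:bv}. The remaining analytical ingredients -- transversality for $r$-generic families and uniform compactness -- are routine adaptations of arguments already used in the construction of the continuation map $\kappa$.
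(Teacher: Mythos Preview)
Your proposal is correct and follows essentially the same route as the paper: introduce a one-parameter family $(H^r_{s,t},J^r_{s,t})$ interpolating between the two monotone homotopies, extend to $\widehat{M}\times S^{2N+1}$, define $h_j$ by counting rigid points of the resulting $[0,1]$-parametrized moduli spaces $\mathcal{M}^h_j(\gamma_-,\gamma_+)$, and read off \eqref{eqn:hrel} from the boundary of the one-dimensional compactification. One cosmetic slip: with your conventions $(H^0_{s,t},J^0_{s,t})=(H_{s,t},J_{s,t})$ and $(H^1_{s,t},J^1_{s,t})=(H'_{s,t},J'_{s,t})$, the slice $r=0$ contributes $\kappa_j$ and $r=1$ contributes $\kappa'_j$, not the other way around, and similarly the attributions of $h_i\delta_l$ versus $\delta_l h_i$ to the two asymptotic breakings are swapped; neither affects the argument.
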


\begin{proof}

Given two monotone homotopies $(H_{s,t}, J_{s,t})$ and $(H_{s,t}',J_{s,t}')$ between $(K_t,J_0)$ and $(H_t, J_1)$, we consider a family of admissible pairs $(H^r_{s,t}, J^r_{s,t})$ in $\mathscr{H}(M) \times \mathscr{J}(M)$ parametrized by $[0,1] \times \R$ such that 
\begin{equation}
H^r_{s,t} =
\begin{cases}
H_{s,t} \text{ if } r \in [0, \delta]\\
H_{s,t}'\text{ if } r \in [1-\delta,1],
\end{cases}
J^r_{s,t} =
\begin{cases}
J_{s,t} \text{ if } r \in [0, \delta]\\
J_{s,t}'\text{ if } r \in [1-\delta,1],
\end{cases}
\nonumber 
\end{equation}
for some $\delta >0$ sufficiently small. Using the extension procedure described in \eqref{eqn:ext}, one obtains the Floer data $(H_{N,s,t}^r, J_{N,s,t}^r)$ defined on $\widehat{M} \times S^{2N+1}$ for each $N$. Let $$\widetilde{\mathscr{M}}^h_i(\gamma_0, \gamma_1):=\widetilde{\mathscr{M}}^h_i(\gamma_0, \gamma_1,H_{N,s,t}^r, J_{N,s,t}^r, \widetilde{f}_{N}, \widetilde{g}_{N})$$ be the moduli space of triples $(r,u,z)$ where $r \in [0,1]$ and $u\colon \R \times S^1 \rightarrow \widehat{M}$, $z\colon \R \rightarrow S^{2N+1}$ are solutions to the system of equations
\begin{equation} \label{eqn:h_i}
\begin{cases}
\partial _s u+J^{r,z(s)}_{N,s,t}(u)(\partial _t u-X_{H_{N,s,t}^{r, z(s)}}(u))=0, \\
\dot{z}+\nabla \widetilde{f}_N(z)=0,
\end{cases}
\end{equation}
with asymptotic behaviors
\begin{equation}
\displaystyle\lim_{s \rightarrow - \infty}(u(s, \cdot), z(s)) \in  S^1 \cdot (\gamma_0, Z_i) ,\ \ \displaystyle\lim_{s \rightarrow \infty}(u(s, \cdot), z(s)) \in  S^1 \cdot (\gamma_1,Z_0).
\end{equation}
There is a free $S^1$-action on the moduli space $\widetilde{\mathscr{M}}^h_i(\gamma_0,\gamma_1)$, we denote by $\mathscr{M}^h_i(\gamma_0,\gamma_1)$ the quotient $\widetilde{\mathscr{M}}^h_i(\gamma_0,\gamma_1)/S^1$.
The linearization of  the first equation in \eqref{eqn:h_i} yields a linear map
\begin{equation}
D\colon \R \oplus T_z(S^1\cdot W^u(Z_i)) \oplus W^{1,p}(\R \times S^1, u^*(T\widehat{M})) \rightarrow L^p(\R \times S^1, u^*(T\widehat{M})),\nonumber
\end{equation}
where $p>2$ and $W^u(Z_i)$ is the unstable or descending manifold of the critical point $Z_i$ of index $2i$ on $\C P^{2N}$. The Floer data $(H^r_{N,s,t}, J^r_{N,s,t})$ is regular if the linear map $D$ is surjective. This is equivalent to surjectivity of  the linear map
\begin{equation}
T_r[0,1]  \oplus T_z(S^1\cdot W^u(Z_i)) \rightarrow \coker(D_u),
\nonumber \end{equation} 
where $D_u$ is the linearized operator associated to an element $(r,u,z)$ in $\widetilde{\mathscr{M}}^h_i(\gamma_0,\gamma_1)$. For regular Floer data $(H^r_{N,s,t},J^r_{N,s,t})$, the dimension of the moduli space $\mathscr{M}^h_i(\gamma_0, \gamma_1)$ is $|\gamma_0|-|\gamma_1|+2i+1$. Similar to the case of BV operator, there is an short exact sequence
\begin{equation}
 \resizebox{0.95\hsize}{!}{$ 0 \rightarrow T_u\widetilde{\mathscr{M}}^h_i(x_0, x_1) \rightarrow T_r[0,1]  \oplus T_z(S^1\cdot W^u(Z_i)) \oplus \ker(D_u) \rightarrow \coker(D_u) \rightarrow 0,
$} 
\end{equation}
which induces an isomorphism of determinant lines
\begin{equation} \label{eqn:orh1}
 \resizebox{0.98\hsize}{!}{$\det(T_u\mathscr{M}^h_i(\gamma_0, \gamma_1)) \otimes \det(T_{\theta}S^1) \cong \det(D_u) \otimes \det(T_r[0,1]) \otimes \det(T_{\theta}S^1) \otimes \det(T_z\cdot W^u(Z_i)).
$} 
\end{equation}
There is another isomorphism given by gluing theory in section \ref{subsec:orient}
\begin{equation} \label{eqn:orh2}
|\det(T_u\mathscr{M}^h_i(\gamma_0, \gamma_1))| \otimes o_{\gamma_1} \cong o_{\gamma_0}.
\end{equation}
We fix the orientation of $T_r[0,1]$ to be $\R \langle \frac{\partial}{\partial_r} \rangle$ and choose a coherent orientation for each unstable manifold $W^u(Z_i)$ of a critical point $Z_i$ on $\C P^N$. For $|\gamma_0|=|\gamma_1|-2i-1$, the moduli space $\mathscr{M}^h_i(\gamma_0, \gamma_1)$ is zero-dimensional and $T_u\mathscr{M}^h_i(\gamma_0, \gamma_1)$ is canonically trivial. By comparing \eqref{eqn:orh1} and \eqref{eqn:orh2}, one obtains an isomorphism
\begin{equation}
h_{i,u}\colon o_{\gamma_1} \rightarrow o_{\gamma_0}
\nonumber \end{equation}
for each $u$ in $\mathscr{M}^h_i(\gamma_0, \gamma_1)$ with $|\gamma_0|=|\gamma_1|-2i-1$.
We can then define operations $h_i\colon CF^*(M,H_t) \rightarrow CF^{*-2i-1}(M,K_t)$ by
\begin{equation}
h_i|_{o_{\gamma_1}}=\bigoplus_{|\gamma_0|=|\gamma_1|-2i-1}\sum_{u \in \mathscr{M}^h_i(\gamma_0, \gamma_1)} h_{i,u}.
\nonumber \end{equation}
After taking $N \rightarrow \infty$, this defines the operation $h_i$ for all $i\geq 0$.
By applying the maximum principle to the solutions to the parametrized Floer equation \eqref{eqn:h_i} with admissible choice of $(H^r_{s,t}, J^r_{s,t}),$ proved in \cite[Lemma 19.1]{Ritter}, we obtain a compactification $\overline{\mathscr{M}^h_n}(\gamma_0, \gamma_1)$ of $\mathscr{M}^h_n(\gamma_0, \gamma_1)$ given by the usual Gromov compactness Theorem for all $n \geq 0$. To see the relation \eqref{eqn:hrel}, one needs to exam the boundary of the 1-dimensional manifold $\overline{\mathscr{M}^h_n}(\gamma_0, \gamma_1)$ with $|\gamma_0|=|\gamma_1|-2n$ for each $n$. Let $N$ be an integer such that $N \geq n$. Appealing to the usual gluing argument, we have that the boundary is explicitly given by
\begin{eqnarray}\label{eqn:bdy}
&& \partial \overline{\mathscr{M}^h_n}(\gamma_0, \gamma_1) = \mathscr{M}^{\kappa}_n(\gamma_0,\gamma_1, H_{N,s,t}, J_{N,s,t}) \cup \mathscr{M}^{\kappa}_n(\gamma_0,\gamma_1, H_{N,s,t}', J_{N,s,t}')  \\
&&  \ \ \  \cup  \bigcup_{i+j=n} \Big( \bigcup_{\gamma}  \mathscr{M}_i(\gamma_0,\gamma) \times \mathscr{M}^h_j(\gamma,\gamma_1) \cup \bigcup_{\gamma'} \mathscr{M}^h_i(\gamma_0,\gamma') \times \mathscr{M}_j(\gamma',\gamma_1) \Big) \nonumber,
\end{eqnarray}
where the equality is to be understood with appropriate orientations. Therefore, we can conclude that $h=(h_0,h_1,\hdots)$ is an $S^1$-equivariant homotopy operator that satisfies
\begin{equation} 
\kappa_n- \kappa'_n=\sum_{i+j=n} h_i \delta_j + \delta_j h_i.
\nonumber \end{equation}
This completes the proof.
\end{proof}

Given Proposition \ref{prop:indepham}, we can promote our notation of periodic symplectic cohomology to $\hp(M)$. This implies that $\hp(M)$ is an invariant of the completion $M$ up to Liouville isomorphism.

\begin{rmk}
Proposition \ref{prop:indepham} shows that the periodic symplectic cohomologies $\hp(M)$ is independent of the choice of admissible Hamiltonians, so in practice it is convenient to define $\hp(M)$ using autonomous Hamiltonians $H^{\tau_i}$ with specific perturbations $\mathbf{H}=\{H^{\tau_i}_t\}$ described in section \ref{sec:concx}. Namely, we choose a cofinal system of Hamiltonians $H^{\tau_i}_t$ and define 
\begin{equation}
\widehat{CF^*}(M):=\bigoplus_{i=1}^{\infty} CF^*(M, H^{\tau_i}_t)[q].
\nonumber \end{equation}
We equip $\widehat{CF^*}(M)((u))$ with the differential $\widehat{\delta}^{S^1}=\widehat{\delta}_0+u\widehat{\delta}_1+u^2\widehat{\delta}_2+\cdots$. The map $\widehat{\delta}_j$ is defined by
\begin{equation}
\widehat{\delta}_j(a+ qb) = (-1)^{deg(a)}\delta_j(a) + (-1)^{deg(b)}(q\delta_j(b) + \kappa_j(b)),
\nonumber \end{equation}
where $\kappa_j\colon CF^*(M, H^{\tau_i}_t) \rightarrow CF^{*-2j}(M, H^{\tau_{i+1}}_t)$ is now taken to be the action-preserving map $\kappa_j$. There is a filtered $S^1$-structure on $\widehat{CF^*}(M)$, and consequently a filtration on $\hp(M)$ by action. It will be convenient to use this formulation to compute $\hp(M)$ in the next section.
\end{rmk}

\section{Computations for the disc and the annulus}\label{sec:comp}
\subsection{$\hpl(D^2)$ for the disk}
\indent We consider $\C$ with a primitive of $\widehat{\omega}=dx \wedge dy$ given by $\widehat{\theta}=\frac{1}{2}(xdy-ydx)$. If we denote $(r, \theta)$ as the polar coordinates on $\C$, the Liouville vector field is given by $Z= \frac{r}{2}\partial_r$. Since the Liouville vector field is of the form $R\partial_R$ using the cylindrical coordinate on $[1, \infty) \times S^1$, this implies that $r^2=R$ so that $R\partial_R=\frac{r}{2} \partial_r$.
Similarly, the Reeb vector field $\mathcal{R}_{\alpha}$ on the contact manifold $S^1=\{z \in \C \mathbin{|} |z|=1\}$ is given by $2\partial_{\theta}$. The periods of Reeb orbits on $S^1$ are $k\pi$ for $k \in \Z_+$.  We choose a cofinal system of Hamiltonians $H^{k\pi+1}\colon \C \rightarrow \R, k\geq 0$ defined in section \ref{sec:concx} such that $H^{k\pi+1}|_{\Int(M)}$ is a negative $C^2$ small Morse function and
\begin{equation} \label{eqn:hkpi}
H^{k\pi+1}|_{[1, \infty) \times \partial M} =
\begin{cases}
\frac{(R-1)^2}{2}, & \text{if } R-1 \in [0, k\pi+1];\\
(k\pi +1)(R-1)+C, & \text{if } R-1 \in [k\pi+1, \infty).
\end{cases}
\end{equation}
The contact manifold $S^1$ consists only good Reeb orbits which are maps $\gamma^i \colon S^1 \rightarrow S^1 \subset \C$ of degree $i$ for all $i \in \Z_+$. The Reeb orbit $\gamma^i$ of degree $i$ corresponds to the $1$-periodic orbit $\gamma_i$ when $R-1=i\pi$.
One can perturb $H^{k\pi+1}$ locally in a neighborhood of each non-constant $1$-periodic orbit $\gamma_i$. Using the explicit perturbation 
$$H^{k\pi+1}_t:=H^{k\pi+1}+ \sum_{i \geq 1}\epsilon_{k\pi+1} f_{\gamma_i},$$ 
each $\gamma_i$ gives rise to two nondegenerate $1$-periodic orbits $\widehat{\gamma}_i, \widecheck{\gamma}_i$ of $H^{k\pi+1}_t$. Their indices can be computed as follows
\begin{equation}
|\widecheck{\gamma}_i|=-2i, \ |\widehat{\gamma}_i|=-2i+1, i \geq 1.
\nonumber \end{equation}
Also, the origin $x_0:=(0,0)$ in $\C$ is a critical point of $H^{k\pi+1}_t$ for all $k \geq 0$. The index of $x_0$ is the Morse index, that is $|x_0|=0$.
For simplicity, we denote the generator of $CF^*(D^2, H^{k\pi+1}_t)$ of index $i$ by $x_i$. Then we have
\begin{equation}
CF^*(D^2, H^{k\pi+1}_t)=\Z\langle x_0, x_{-1}, \hdots x_{-2k+1}, x_{-2k} \rangle.
\nonumber \end{equation}
Since $\delta_i$ preserve the action filtration on $CF^*(D^2, H^{k\pi+1}_t)$ defined in \eqref{defn:af}, we have that $\delta_i=0$ for all $i\geq 2$. Then Proposition \ref{prop:1} implies that for $j=1, \hdots, k$
\begin{equation}
d(x_{-2j})=0, \ \ \ \Delta(x_{-2j+1})=jx_{-2j}.
\nonumber \end{equation}
Also, we see that $\Delta(x_{-2j})=0$ as $\Delta$ preserves the action filtration on $CF^*(M, H^{k\pi+1}_t)$ in \eqref{defn:af}. There is precisely one solution to the Floer equation with asymptotic conditions $x_{-2j+1}$ and $x_{-2j+2}$ at $\pm \infty$. This implies that 
$d(x_{-2j+1})=x_{-2j+2}$. Thus we have
\begin{eqnarray}
&& CF*(D^2, H^{k\pi+1}_t)((u)) \cong \Z((u))\langle x_0, x_{-1}, \hdots x_{-2k+1}, x_{-2k} \rangle,\nonumber \\
&& \delta^{S^1}(x_{-2j})=0,\ \ \delta^{S^1}(x_{-2j+1})=x_{-2j+2}+ujx_{-2j} \text{ for } j=1, \hdots ,k. \nonumber
\end{eqnarray}
The homology of this cochain complex is 
\begin{equation}
HP^*_{S^1}(D^2, H^{k\pi+1}_t) \cong \Z((u)) \langle [x_{-2k}] \rangle.
\nonumber \end{equation}
To determine the homomorphism 
\begin{equation}
(\kappa_{k, k+1})_* \colon HP^*_{S^1}(D^2, H^{k\pi+1}_t) \rightarrow HP^*_{S^1}(D^2, H^{(k+1)\pi+1}_t),
\nonumber \end{equation} 
we consider the continuation map $\kappa_{k,k+1}$ induced by the monotone homotopy between $H^{k\pi+1}_t$ and $H^{(k+1)\pi+1}_t$. As the continuation map $\kappa_{k,k+1}$ respects action filtration, the cochain complex $CF^*(D^2, H^{k\pi+1}_t)$ have the same generators as those of $CF^*(D^2, H^{k\pi+1}_t)$ which we also denote by $x_i$ for $i=0, \hdots, -2k$. For 1-periodic orbit $\widehat{\gamma}$ of $H^{k\pi+1}_t$ and $H^{k\pi+1}_t$, there is precisely one regular solution to the continuation equation given by 
\begin{equation}
u\colon \R \times S^1 \rightarrow N(\gamma),\ \ u(s,t) \mapsto \widehat{\gamma}(t).
\nonumber \end{equation}
So the continuation map $\eta_0$ is the identity. As the values of the Hamiltonians $H^{k\pi+1}_t$ and $H^{(k+1)\pi+1}_t$ agree on $M \cup [1, k\pi+2] \times \partial M$, the continuation map 
$$\kappa_0'\colon CF^*(D^2, H^{k\pi+1}_t) \rightarrow CF^*(D^2, H^{(k+1)\pi+1}_t)$$
is given by the natural inclusion. This implies that $\kappa_{k,k+1}=\kappa_0$ is also the natural inclusion
\begin{eqnarray}
\kappa_{k,k+1}\colon  CF^*(D^2, H^{k\pi+1}_t)((u)) &\rightarrow & CF^*(D^2, H^{(k+1)\pi+1}_t)((u)),\nonumber\\
 u^j x_i &\mapsto & u^j x_i. \nonumber
\end{eqnarray}
After passing to homology, we have that
\begin{equation}
(\kappa_{k,k+1})_*([x_{-2k}])=-(k+1)u[x_{-2k-2}].
\nonumber \end{equation}
The direct limit with respect to 
\begin{equation}
(\kappa_{k,k+1})_*\colon  HP^*_{S^1}(D^2, H^{k\pi+1}_t) \rightarrow HP^*_{S^1}(D^2, H^{(k+1)\pi+1}_t)
\nonumber \end{equation} 
is explicitly given by

\begin{equation}
\xymatrixcolsep{4.5pc}\xymatrix{
\Z((u))\langle [x_0] \rangle \ar[r]^-{[x_0] \mapsto -u[x_{-2}]} & \Z((u))\langle [x_{-2}] \rangle \ar[r]^-{[x_{-2}] \mapsto -2u[x_{-4}]} &\Z((u))\langle [x_{-4}] \rangle \ar[r]^-{[x_{-4}] \mapsto -3u[x_{-6}]} &\cdots.
} \nonumber
\nonumber 
\end{equation}

Under the homomorphism, 
\begin{equation}
\psi_k \colon \Z((u))\langle [x_{-2k}] \rangle \rightarrow \Q((u))\langle x_0 \rangle, \ \ a \mapsto a/(-1)^kk!u^k,
\nonumber \end{equation}
we conclude that
\begin{equation}
\hpl(D^2)\colon =\varinjlim HP^*_{S^1}(D^2, H_k)\cong\Q((u))\langle [x_0] \rangle \cong \Q((u)),
\nonumber \end{equation}
which verifies Theorem \ref{thm:local} in the case of $D^2 \subset \C$.

\subsection{$\hp(D^2)$ for the disk}
Let $\{H^{k\pi +1}_t\}_{k\geq 0}$ be the cofinal system of Hamiltonian defined in \eqref{eqn:hkpi}. We apply the telescope construction to $\{CF^*(D^2, H^{k\pi+1}_t)\}$ and obtain that
\begin{equation}
\widehat{CF^*}(D^2)=\bigoplus_{k=1}^{\infty}CF^*(D^2, H^{k\pi_1}_t)[q].
\nonumber \end{equation}
As shown in section \ref{sec:cpsh}, the sequence of operations $(\widehat{\delta}_0, \widehat{\delta}_1, \hdots)$ gives rise to a $S^1$-structure on $\widehat{CF^*}(D^2)$.
One notices that $H_*(\widehat{CF^*}(D^2), \widehat{\delta}_0)=SH^*(D^2)$. By Corollary \ref{cor:van}, we conclude that $\hp(D^2)=0$ as $SH^*(D^2)=0$.

\subsection{$\hpl(A)$ for the annulus}
Let $A\cong[-1, 1] \times S^1$ be the annulus with coordinate $(r,\theta)$ in $\R \times S^1 \cong T^*S^1$. There is a biholomorphism 
\begin{equation}
\iota\colon T^*S^1 \rightarrow \C^*, (r, \theta) \mapsto e^{r+i\theta},
\nonumber \end{equation}
which embeds $T^*S^1$ into $\C$. We equip $A$ with the Liouville structure induced by $\iota^* \widehat{\theta}$, where $\widehat{\theta}$ is the standard Liouville form $\widehat{\theta}=\frac{1}{2}(xdy-ydx)$ on $\C$. We consider a cofinal system of autonomous Hamiltonians $H^{i\pi+1}\colon T^*S^1 \rightarrow \R$ with $i \geq 0$ on each cylindrical end $[1,\infty) \times S^1$ defined by
\begin{equation} \label{eqn:annham}
H^{i\pi+1}=
\begin{cases}
\frac{(R-1)^2}{2}, & \text{ if } R-1 \in [0, i\pi+1] ;\\
(i\pi +1)(R-1) +C, & \text{ if } R-1 \in [i\pi+1, \infty).
\end{cases}
\end{equation}
and $H^{i\pi+1}|_{\Int(A)}$ is a negative $C^2$-small Morse function, which has a minimum at $(0,0)$ and a maximum at $(0, \frac{1}{2})$ in $ A=[-1,1] \times S^1$. We denote by $\widehat{\gamma}_0, \widecheck{\gamma}_0$ the minimum and the maximum of $H^{i\pi+1}|_{\Int(\iota(A))}$, respectively. For each $H^{i\pi+1}$, there are $2i$ non-constant $1$-periodic orbits denoted by $\gamma_{k}$ and $\gamma_{-k}$ with $k=1, \hdots, i$, where $\gamma_{k}$ and $\gamma_{-k}$ correspond to Reeb orbits of multiplicity $k$ on the two components of $\partial A =S^1 \sqcup S^1$ with opposite orientations. After perturbing $H^{i\pi+1}$ to $H^{i\pi+1}_t$ as before, we obtain that two nondegenerate $1$-periodic orbits $\widehat{\gamma}_k, \widecheck{\gamma}_k$ for each $\gamma_k$ with indices 
\begin{equation}
|\widecheck{\gamma}_k|=0 \text{ and } |\widehat{\gamma}_k|=1\text{ for } k=-i,\hdots, i.
\nonumber \end{equation}
with respect to the trivialization of the tangent space of $T^*S^1$ given by $\iota^*\alpha$, where $\alpha=\frac{dz}{z}$ is the standard holomorphic form  defined on $\C^*$. The Floer cochain complex of $H^{i\pi+1}_t$ is given by
\begin{equation}
CF^*(A, H^{i\pi+1}_t)=\Z \langle \widehat{\gamma}_{-i}, \widecheck{\gamma}_{-i}, \widehat{\gamma}_{-i+1}, \widecheck{\gamma}_{-i+1}\hdots, \widehat{\gamma}_{i-1}, \widecheck{\gamma}_{i-1}, \widehat{\gamma}_i, \widecheck{\gamma}_i \rangle.
\nonumber \end{equation}
For $j,k \neq 0$, the generators $\widecheck{\gamma}_j$ and $\widehat{\gamma}_k$ are in different free homotopy class if $j \neq k$. So we have that $d(\widecheck{\gamma}_k)=0$ if $j \neq k$. Together with that fact 
$$d(\widecheck{\gamma}_k)=0\cdot \widehat{\gamma}_k=0$$
by Proposition \ref{prop:1}, one obtains $\delta^{S^1}(\widecheck{\gamma}_k)=0$ for all $k \neq 0$. Moreover, one has
\begin{equation}
\delta^{S^1}(\widehat{\gamma}_k)=0+ku\widecheck{\gamma}_k=ku\widecheck{\gamma}_k \text{ for all }k \neq 0,
\nonumber \end{equation}
 as $\gamma_k$ corresponds to a good orbit of multiplicity $k$ on $\partial A =S^1 \sqcup S^1$. For $k=0$, we have $\delta^{S^1}(\widehat{\gamma}_0)=0$ by degree reasons, and $\delta^{S^1}(\widecheck{\gamma}_0)=0$ since the Floer differential $d$ agrees with the Morse differential between $\widehat{\gamma}_0$ and $\widecheck{\gamma}_0$ and there is no regular solution to the BV operator in this case. This implies that the homology of $(CP^*(A,H^{i\pi+1}_t), \delta^{S^1})$ is given by
\begin{eqnarray}
&& \ \ HP_{S^1}^{\mathrm{even}}(M, H^{i\pi+1}_t) \cong \bigoplus_{k \in [-i,i]\cap \Z } \Z/k\Z((u)),\nonumber \\
&& \ \  HP_{S^1}^{\mathrm{odd}}(M,H^{i\pi+1}_t) \cong \Z((u)),\nonumber \\
&& \ \ HP^*_{S^1}(M, H^{i\pi+1}_t)=\Z ((u)) \oplus \bigoplus_{k \in [-i,i]\cap \Z} \Z/k\Z((u)).\nonumber
\end{eqnarray}
For $H^{i\pi+1}_t \preceq H^{(i+1)\pi+1}_t$, higher order continuation maps $\kappa_i$ vanish for all $i \geq 1$. Similar to the case of $D^2$ in $\C$, the continuation map 
\begin{equation}
\kappa_{i,i+1}\colon CF^*(A, H^{i\pi+1}_t)((u)) \rightarrow CF^*(A, H^{(i+1)\pi+1}_t)((u))
\nonumber \end{equation}
is induced by $\kappa_0 \colon CF^*(M, H^{i\pi+1}_t) \rightarrow CF^*(M, H^{(i+1)\pi+1}_t)$ given by $\kappa_0$. With respect to 
\begin{equation}
(\kappa_{i,i+1})_*\colon HP^*_{S^1}(A, H^{i\pi+1}_t) \rightarrow HP^*_{S^1}(A, H^{(i+1)\pi+1}_t),
\nonumber \end{equation}
we conclude that 
\begin{equation}
\hpl(A)=\varinjlim HP^*_{S^1}(A, H^{i\pi+1}_t)=
\Z((u)) \oplus \bigoplus_{k \in \Z} \Z/k\Z((u)).
\nonumber \end{equation} 
This confirms that $\hpl(A, \Q) \cong H^*(A,\Q)((u))\cong H^*(S^1,\Q)((u))$.

\subsection{$\hp(A)$ for the annulus}
Let $H^{i\pi+1}_t$ be the sequence of admissible Hamiltonian defined in \eqref{eqn:annham}. The telescope construction of $\{CF^*(A, H^{i\pi+1}_t)\}_{i \geq 0}$ is given by 
\[
\xymatrixcolsep{.1pc}\xymatrix{
 \Z \langle \widehat{\gamma}_0, \widecheck{\gamma}_0 \rangle  \ar@(ul,ur)^{d} && \Z \langle  \widehat{\gamma}_{-1}, \widecheck{\gamma}_{-1},\widehat{\gamma}_0, \widecheck{\gamma}_0, \widehat{\gamma}_1, \widecheck{\gamma}_1 \rangle   \ar@(ul,ur)^{d} &&   \Z \langle  \widehat{\gamma}_{-2}, \widecheck{\gamma}_{-2},  \widehat{\gamma}_{-1}, \widecheck{\gamma}_{-1},\widehat{\gamma}_0, \widecheck{\gamma}_0, \widehat{\gamma}_1, \widecheck{\gamma}_1, \widehat{\gamma}_2, \widecheck{\gamma}_2  \rangle   \ar@(ul,ur)^{d}  \\
q \Z \langle \widehat{\gamma}_0, \widecheck{\gamma}_0 \rangle  \ar[u]^{id} \ar[urr]^{\kappa_0} \ar@(dl,dr)_{d} &&  q\Z \langle  \widehat{\gamma}_{-1}, \widecheck{\gamma}_{-1},\widehat{\gamma}_0, \widecheck{\gamma}_0, \widehat{\gamma}_1, \widecheck{\gamma}_1 \rangle \ar[u]^{id} \ar[urr]^{\kappa_0}  \ar@(dl,dr)_{d} &&  \cdots \cdots
}
\]
By degree reasons, $\delta_i=0$ for $i \geq 2$ and $\kappa_j=0$ for $j \geq 1$ in the definition of $\widehat{\delta}^{S^1}$.
This implies that 
\begin{eqnarray}
&& \widehat{\delta}^{S^1}(\widehat{\gamma}_0)=\widehat{\delta}^{S^1}(\widecheck{\gamma}_0)=0,  \nonumber \\
&& \widehat{\delta}^{S^1}(q\widehat{\gamma}_0)=-(\kappa_0(\widehat{\gamma}_0)- \widehat{\gamma}_0),\ \   \nonumber\\
&& \widehat{\delta}^{S^1}(q\widecheck{\gamma}_0)=\kappa_0(\widecheck{\gamma}_0)- \widecheck{\gamma}_0, \nonumber \\
&& \widehat{\delta}^{S^1}(\widecheck{\gamma}_k)=0, \ \   \widehat{\delta}^{S^1}(\widehat{\gamma}_k)=ku\widecheck{\gamma}_k,\nonumber \\
&&\widehat{\delta}^{S^1}(q\widecheck{\gamma}_k)=\kappa_0(\widecheck{\gamma}_k)-\widecheck{\gamma}_k\text{ for all } k \in \Z, \nonumber\\
&& \widehat{\delta}^{S^1}(q\widehat{\gamma}_k)=-(qku\widecheck{\gamma}_k+\kappa_0(\widehat{\gamma}_k)-\widehat{\gamma}_k) \text{ for all } k \in \Z.\nonumber
\end{eqnarray}  
Therefore, we conclude that
\begin{equation}
\hp(A)= \big( \Z \oplus \bigoplus_{k \in \Z} \Z/k\Z \big)((u)).
\nonumber \end{equation}

\end{document}